\newdimen\LineSpace
\tikzset{
    line space/.code={\LineSpace=#1},
    line space=8.5pt
}
\newtheorem{theorem}{Theorem}[section]
\newtheorem{corollary}[theorem]{Corollary}
\newtheorem{lemma}[theorem]{Lemma}
\newtheorem{proposition}[theorem]{Proposition}
\newtheorem{definition-proposition}[theorem]{Definition-Proposition}
\theoremstyle{definition}
\newtheorem{definition}[theorem]{Definition}
\newtheorem{example}[theorem]{Example}
\theoremstyle{remark}
\newtheorem{remark}[theorem]{Remark}
\newcommand{\calA}{\mathcal{A}}
\newcommand{\calC}{\mathcal{C}}
\newcommand{\calD}{\mathcal{D}}
\newcommand{\calL}{\mathcal{L}}
\newcommand{\calM}{\mathcal{M}}
\newcommand{\calO}{\mathcal{O}}
\newcommand{\rmV}{\mathrm{V}}
\newcommand{\rmX}{\mathrm{X}}
\newcommand{\rmY}{\mathrm{Y}}
\newcommand{\fkp}{\mathfrak{p}}
\newcommand{\ZZ}{\mathbb{Z}}
\newcommand{\QQ}{\mathbb{Q}}
\newcommand{\RR}{\mathbb{R}}
\newcommand{\TT}{\mathbb{T}}
\newcommand{\kk}{\Bbbk}
\newcommand{\bfa}{\mathbf{a}}
\newcommand{\bfe}{\mathbf{e}}
\newcommand{\bfu}{\mathbf{u}}
\newcommand{\bfx}{\mathbf{x}}
\newcommand{\sfM}{\mathsf{M}}
\newcommand{\sfN}{\mathsf{N}}
\newcommand{\Spec}{\operatorname{Spec}}
\newcommand{\Hom}{\operatorname{Hom}}
\newcommand{\Ker}{\operatorname{Ker}}
\newcommand{\Cl}{\operatorname{Cl}}
\newcommand{\End}{\operatorname{End}}
\newcommand{\gldim}{\mathrm{gl.dim \,}}
\newcommand{\pdim}{\mathrm{proj.dim}}
\newcommand{\add}{\mathsf{add}}
\newcommand{\mc}{\mathsf{mod}}
\newcommand{\refl}{\mathsf{ref}}
\begin{document}

\title[Conic divisorial ideals of Hibi rings and their applications to NCCRs]{Conic divisorial ideals of Hibi rings and their applications to non-commutative crepant resolutions}
\author[A Higashitani \and Y. Nakajima]{Akihiro Higashitani \and Yusuke Nakajima} 

\address[A Higashitani]{ Department of Pure and Applied Mathematics, Graduate School of Information Science and Technology, Osaka University, Suita, Osaka 565-0871, Japan}
\email{higashitani@ist.osaka-u.ac.jp}

\address[Y. Nakajima]{Kavli Institute for the Physics and Mathematics of the Universe (WPI), UTIAS, The University of Tokyo, Kashiwa, Chiba 277-8583, Japan} 
\email{yusuke.nakajima@ipmu.jp}


\subjclass[2010]{Primary 13C14; Secondary 06A11, 14M25, 16S38} 
\keywords{Hibi rings, Conic divisorial ideals, Non-commutative (crepant) resolutions, Segre products of polynomial rings} 

\maketitle

\begin{abstract} 
In this paper, we study divisorial ideals of a Hibi ring which is a toric ring arising from a partially ordered set.  
We especially characterize the special class of divisorial ideals called conic using the associated partially ordered set. 
Using our description of conic divisorial ideals, we also construct a module giving a non-commutative crepant resolution (= NCCR) 
of the Segre product of polynomial rings. 
Furthermore, applying the operation called mutation, we give other modules giving NCCRs of it.
\end{abstract}


\section{Introduction} 
\label{sec_intro}
In this paper, we study a certain class of toric rings called Hibi rings. Thus, we start this paper with introducing toric rings. 
Let $\sfN\cong\ZZ^d$ be a lattice of rank $d$ and let $\sfM\coloneqq\Hom_\ZZ(\sfN, \ZZ)$ be the dual lattice of $\sfN$. 
We set $\sfN_\RR\coloneqq\sfN\otimes_\ZZ\RR$ and $\sfM_\RR\coloneqq\sfM\otimes_\ZZ\RR$ and 
denote an inner product by $\langle\;,\;\rangle:\sfM_\RR\times\sfN_\RR\rightarrow\RR$. 
We consider a strongly convex rational polyhedral cone 
$$
\tau\coloneqq\mathrm{Cone}(v_1, \cdots, v_n)=\RR_{\ge 0}v_1+\cdots +\RR_{\ge 0}v_n\subset\sfN_\RR 
$$
of dimension $d$ generated by $v_1, \cdots, v_n\in\ZZ^d$ where $d\le n$. We assume this system of generators is minimal. 
For each generator, we define a linear form $\sigma_i(-)\coloneqq\langle-, v_i\rangle$ and denote $\sigma(-)\coloneqq(\sigma_1(-),\cdots,\sigma_n(-))$. 
We consider the dual cone $\tau^\vee$: 
$$
\tau^\vee\coloneqq\{{\bf x}\in\sfM_\RR \mid \sigma_i({\bf x})\ge0 \text{ for all } i=1,\cdots,n \}. 
$$
Then, $\tau^\vee\cap\sfM$ is a positive normal affine monoid, and hence we define the toric ring 
$$
R\coloneqq \kk[\tau^\vee\cap\sfM]=\kk[t_1^{m_1}\cdots t_d^{m_d}\mid (m_1, \cdots, m_d)\in\tau^\vee\cap\sfM], 
$$
where $\kk$ is an algebraically closed field. 
It is known that $R$ is a $d$-dimensional Cohen-Macaulay (= CM) normal domain. 
In addition, for each $\bfa=(a_1, \cdots, a_n)\in\RR^n$, we set 
$$
\TT(\bfa)\coloneqq\{{\bf x}\in\sfM \mid (\sigma_1({\bf x}), \cdots, \sigma_n({\bf x}))\ge(a_1, \cdots, a_n)\}. 
$$
Then, we define the module $T(\bfa)$ generated by all monomials whose exponent vector is in $\mathbb{T}(\bfa)$. 
By definition, we have $T(\bfa)=T(\ulcorner \bfa\urcorner)$, where $\ulcorner \; \urcorner$ means the round up 
and $\ulcorner \bfa\urcorner=(\ulcorner a_1\urcorner, \cdots, \ulcorner a_n\urcorner)$. 
This $T(\bfa)$ is a divisorial ideal (rank one reflexive module), and any divisorial ideal of $R$ takes this form (see e.g., \cite[Theorem~4.54]{BG2}). 
Therefore, each divisorial ideal is represented by $\bfa\in\ZZ^n$. Clearly, we have $\mathbb{T}(0)=\tau^\vee\cap\sfM$ and $T(0)=R$. 
In particular, we are interested in a divisorial ideal that is \emph{maximal Cohen-Macaulay} (= \emph{MCM}) as an $R$-module, 
that is, an $R$-module whose depth coincides with $\dim R=d$. 
Rank one MCM modules of toric rings have been studied in several papers e.g., \cite{Sta79,VdB1,Don,BG1,Bae, Bru}, 
and the number of isomorphism classes of rank one MCM modules is finite in particular \cite[Corollary~5.2]{BG1}. 
In what follows, we will pay attention to a certain class of divisorial ideals called \emph{conic class}. 
As we will see below, a conic divisorial ideal is a rank one MCM module, and hence the number of conic classes is finite. 

\begin{definition}[{see e.g., \cite[Section~3]{BG1}}]
\label{def_conic}
We say that a divisorial ideal $T(\bfa)$ is \emph{conic} if there exist $\bfx\in\sfM_\RR$ such that $\bfa=\ulcorner\sigma(\bfx)\urcorner$. 
\end{definition}

In the rest, we denote the set of isomorphism classes of conic divisorial ideals of a toric ring $R$ by $\calC(R)$, especially this is a finite set. 
An advantage to considering conic classes is that we can obtain a non-commutative ring having finite global dimension. 

\begin{theorem}
\label{motivation_thm}
Let $R$ be a toric ring as above. Concerning a conic divisorial ideal, we have the followings: 
\begin{enumerate}[\rm (1)]
\setlength{\parskip}{0pt} 
\setlength{\itemsep}{3pt}
\item  {\rm $($\cite[Proposition~3.6]{BG1}, \cite[Proposition~3.2.3]{SmVdB}$)$} 
Conic divisorial ideals are precisely modules appearing in $R^{1/m}$ 
as direct summands for $m\gg 0$, where $R^{1/m}=\kk[\tau^\vee\cap\frac{1}{m}\sfM]$ is the $R$-module consisting of $m$-th roots of elements in $R$. 
\item  {\rm $($\cite[Proposition~1.8]{SpVdB}, \cite[Corollary~6.2]{FMS}$)$} For $m\gg 0$, $\End_R(R^{1/m})$ has finite global dimension, 
that is, $\End_R(R^{1/m})$ is a non-commutative resolution $($= NCR$)$ of $R$ $($see Definition~\ref{def_NCCR}$)$. 
\end{enumerate}
\end{theorem}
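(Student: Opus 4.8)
The plan is to prove the two parts separately, establishing (1) by an explicit $\sfM$-graded decomposition of $R^{1/m}$ and then feeding it into (2). For (1), I would first decompose $R^{1/m}=\kk[\tau^\vee\cap\tfrac1m\sfM]$ as an $R$-module according to residues modulo $\sfM$. Each monomial of $R^{1/m}$ has exponent $\mathbf{y}\in\tau^\vee\cap\tfrac1m\sfM$, and since $R=\kk[\tau^\vee\cap\sfM]$ acts by translating exponents by vectors of $\sfM$, the residue $\mathbf{y}+\sfM\in\tfrac1m\sfM/\sfM$ is preserved. Hence
\[
R^{1/m}=\bigoplus_{\mathbf{c}\in\frac1m\sfM/\sfM}\bigl(R^{1/m}\bigr)_{\mathbf{c}},
\]
where $(R^{1/m})_{\mathbf{c}}$ is the $\kk$-span of the monomials with exponent in $(\mathbf{c}+\sfM)\cap\tau^\vee$. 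Fixing a representative $\mathbf{c}\in\tfrac1m\sfM$ and translating exponents by $-\mathbf{c}$ identifies this index set with $\{\mathbf{x}\in\sfM\mid\sigma_i(\mathbf{x})\ge\sigma_i(-\mathbf{c})\text{ for all }i\}=\TT(\sigma(-\mathbf{c}))$, so that $(R^{1/m})_{\mathbf{c}}\cong T(\ulcorner\sigma(-\mathbf{c})\urcorner)$. By Definition~\ref{def_conic} (with $\mathbf{x}=-\mathbf{c}$) every such summand is conic, which gives one inclusion.

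For the reverse inclusion, take a conic ideal $T(\ulcorner\sigma(\mathbf{x})\urcorner)$ with $\mathbf{x}\in\sfM_\RR$. The vector $\ulcorner\sigma(-)\urcorner$ is locally constant on the complement of the arrangement $\{\sigma_i\in\ZZ\}$, and on each wall it can be kept fixed by a small interior perturbation; since rational points are dense, I may assume $\mathbf{x}$ is rational. Choosing $m$ divisible enough that $\mathbf{x}\in\tfrac1m\sfM$ then exhibits $T(\ulcorner\sigma(\mathbf{x})\urcorner)$ as the summand $(R^{1/m})_{-\mathbf{x}}$. As $\calC(R)$ is finite, a single $m\gg0$ serves all conic classes at once, proving (1).

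For (2), the first move is to use (1) to reduce to a multiplicity-free module. For $m\gg0$, $R^{1/m}$ is, up to multiplicities, the sum $M\coloneqq\bigoplus_{c\in\calC(R)}T(c)$ of all conic classes; since each conic class does occur as a summand, $\End_R(R^{1/m})$ and $\End_R(M)$ are Morita equivalent, and $\gldim$ is a Morita invariant, so it suffices to show $\gldim\End_R(M)<\infty$. Here I would pass to geometry: a smooth toric (Deligne--Mumford stacky) resolution $\pi\colon X\to\Spec R$ carries a tilting bundle $\calT$ whose indecomposable summands are line bundles whose pushforwards are exactly the conic divisorial ideals, so $\pi_*\calT\cong M$. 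Because the $T(c)$ are MCM the higher direct images $\mathbf{R}^i\pi_*\calT$ vanish, giving $\End_X(\calT)\cong\End_R(M)$; and since $\calT$ is tilting, $\mathbf{R}\Hom_X(\calT,-)$ realizes a derived equivalence with $\mc \End_X(\calT)$. Smoothness of $X$ forces this endomorphism algebra to have finite global dimension, hence the same for $\End_R(M)$.

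The main obstacle is genuinely step (2): constructing the tilting bundle $\calT$ whose summands match $\calC(R)$ and verifying both the comparison $\End_X(\calT)\cong\End_R(M)$ and the underlying $\Ext$-vanishing. Equivalently, one must show that for $m\gg0$ the conic ideals are plentiful enough to resolve every finitely generated $R$-module through $\add M$ in boundedly many steps, and it is precisely here that the largeness of $m$ and the finite chamber combinatorics of the arrangement $\{\sigma_i\in\ZZ\}$ do the work; this is the technical core of \cite{SmVdB,SpVdB}. A purely algebraic alternative is to build finite $\add M$-resolutions of the simple $\End_R(M)$-modules by descending induction over the chambers, but establishing termination there is no easier.
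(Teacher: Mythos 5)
The paper itself contains no proof of Theorem~\ref{motivation_thm}: both parts are quoted from the literature, (1) from \cite[Proposition~3.6]{BG1} and \cite[Proposition~3.2.3]{SmVdB}, (2) from \cite[Proposition~1.3.6]{SpVdB}. So your proposal must be judged against those proofs. Your part (1) is correct and is essentially the cited argument: the decomposition of $R^{1/m}$ along residue classes in $\frac{1}{m}\sfM/\sfM$, the identification $(R^{1/m})_{\mathbf{c}}\cong T(\ulcorner\sigma(-\mathbf{c})\urcorner)$, and the converse via a rational representative are exactly how \cite{BG1,SmVdB} proceed. Two points you gloss over are routine but worth making explicit: to move $\bfx$ off the walls while keeping $\ulcorner\sigma(\bfx)\urcorner$ fixed, perturb by $-\epsilon\mathbf{w}$ where $\sigma_i(\mathbf{w})>0$ for all $i$ (such $\mathbf{w}$ exists since $\tau^\vee$ is full-dimensional); this shows every nonempty cell $\bigcap_i L_{i,a_i}$ has nonempty interior, hence meets $\frac{1}{m}\sfM$ for \emph{all} sufficiently large $m$, which is what ``$m\gg0$'' demands (divisibility alone only gives infinitely many $m$). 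And ``precisely'' needs Krull--Schmidt together with indecomposability of rank-one reflexive modules to exclude other summands.

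Part (2) is where the genuine gap lies, and you flag it yourself. The Morita reduction to $M=\bigoplus_{c\in\calC(R)}T(c)$ is fine, but everything after it restates the problem rather than solving it. The existence of a smooth toric stack $X\to\Spec R$ carrying a \emph{tilting} bundle $\mathcal{T}$ whose summands correspond to the conic classes is not a fact one can invoke; it is equivalent to the assertion being proved. In the model where the dictionary ``line bundles $\leftrightarrow$ modules of covariants'' actually holds --- the quotient stack $[\Spec S/G]$ with $G=\Hom(\Cl(R),\kk^\times)$, which is the setting the paper uses in Section~\ref{sec_NCCR} --- the $\Ext$-vanishing half of tiltingness is automatic, because $G$ is diagonalizable and taking invariants is exact, so $\Ext^{>0}$ between the equivariant free modules $P_\chi$ vanishes; your appeal to MCM-ness and vanishing of $\mathbf{R}^i\pi_*\mathcal{T}$ is beside the point. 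The entire content is the \emph{generation} statement: every $P_\mu$, $\mu\in\rmX(G)$, must lie in the thick subcategory generated by $\{P_\chi\mid\chi\in\calC(R)\}$, equivalently $\pdim_{\Lambda_\calL}P_{\calL,\mu}<\infty$ for all $\mu$ when $\calL=\calC(R)$. That is precisely what the separation/Koszul machinery of \cite{SpVdB} (recalled in the paper as Lemmas~\ref{key_lem1} and~\ref{key_lem2}) establishes, and it is exactly the step you defer back to the references. On an honest geometric (stacky) resolution the plan is in even worse shape, since there $\Ext$-vanishing between the candidate line bundles is a nontrivial condition that does not follow from their pushforwards being MCM. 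So your (1) stands as a complete proof, but your (2) is a correct reduction plus an acknowledged hole where the theorem's actual content should be.
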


A ring having finite global dimension appeared in M. Auslander's works e.g., \cite{Aus} 
and has been well studied in representation theory of algebras. Also, it is related with the dimension of derived categories \cite{Rou}. 
In this way, conic divisorial ideals have nice properties. 
Also, we remark that if $\mathrm{char}\,\kk=p$, the $R$-module $R^{1/p}$ can be obtained via the Frobenius morphism 
and the structure of $R^{1/p}$ is important in positive characteristic commutative algebra. 
Thus, our first interest lies on the classification of conic classes. 

In this paper, we will give the precise description of conic divisorial ideals of a Hibi ring, which is a toric ring constructed from a partially ordered set (= poset). 
We here review our main results (see Section~\ref{sec_conic} for further details on terminologies). 
Let $R=\kk[P]$ be a Hibi ring associated with a poset $P$ (see Subsection~\ref{subsec_Hibi}). 
As we will see in Subsection~\ref{subsec_conic_Hibi}, the class group of $R$ is $\Cl(R)\cong\ZZ^{n-d}$ 
where $d-1$ is the number of elements in $P$ and $n$ is the number of edges of the Hasse diagram of $\widehat{P}$, 
and hence each divisorial ideal is represented as $(a_1,\cdots, a_{n-d})\in\mathbb{Z}^{n-d}$. 
Then, conic divisorial ideals are characterized as follows, and hence these are precisely modules appearing in $R^{1/m}$ for sufficiently large $m\gg 0$. 

\begin{theorem}[{see Theorem~\ref{thm:conic}}] 
\label{conic_thm_intro}
Let $R$ be a Hibi ring associated with a poset $P$. 
Then, $(a_1,\cdots,a_{n-d})\in\mathbb{Z}^{n-d}$ corresponds to a conic divisorial ideal if and only if $(a_1,\cdots,a_{n-d})\in\calC(P)\cap\ZZ^{n-d}$
$($see $(\ref{ccp})$ for the precise definition of $\calC(P)$$)$. 
\end{theorem}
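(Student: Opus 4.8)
The plan is to convert the conic condition into a combinatorial feasibility problem on the Hasse diagram of $\widehat P$ and then solve it with the tension feasibility theorem. First I would recall from subsection~\ref{subsec_conic_Hibi} the explicit description of the cone: $\tau^\vee$ is the cone over the order polytope of $P$, the minimal ray generators of $\tau$ (equivalently the linear forms $\sigma_e$) are indexed by the edges $e$ of the Hasse diagram of $\widehat P$, and for an edge $e$ joining $u\lessdot w$ one has $\sigma_e(\bfx)=x_u-x_w$, where $\bfx=(x_v)_{v\in\widehat P}$ is subject to the normalization fixing the coordinates at $\hat{0},\hat{1}$ (chosen so that $\sigma_e\ge 0$ on $\tau^\vee$). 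Writing a divisorial ideal as $T(\bfa)$ with $\bfa=(a_e)_e\in\ZZ^n$ and using the elementary equivalence $\ulcorner y\urcorner=a\iff 0\le a-y<1$, Definition~\ref{def_conic} becomes: $T(\bfa)$ is conic if and only if $\bfa\in\calL+[0,1)^n$, where $\calL\coloneqq\sigma(\sfM_\RR)\subset\RR^n$. Equivalently, there must exist $\ell\in\calL$ with $\ell_e\in(a_e-1,a_e]$ for every edge $e$. Since $\calL$ is exactly the space of tensions (potential differences) on the Hasse diagram, this is the question of whether a tension exists with the prescribed half-open bounds on each edge.

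Next I would extract the per-cycle necessary conditions. Because $\ell$ is a tension, its circulation around any cycle $C$ of the Hasse diagram vanishes: $\sum_{e\in C^-}\ell_e=\sum_{e\in C^+}\ell_e$, where $C^+,C^-$ denote the edges traversed forward and backward. Feeding in the bounds $\ell_e\in(a_e-1,a_e]$ and using that the $a_e$ are integers, the requirement that the two half-open intervals carrying $\sum_{C^-}\ell_e$ and $\sum_{C^+}\ell_e$ overlap forces, for each cycle $C$, an inequality of the shape
\[
-(|C^+|-1)\ \le\ \sum_{e\in C^-}a_e-\sum_{e\in C^+}a_e\ \le\ |C^-|-1 .
\]
These are the constraints that I expect, after passing to coordinates, to match the defining inequalities of $\calC(P)$ in $(\ref{ccp})$.

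For the converse, and to pin down $\calC(P)$, I would invoke the \emph{tension feasibility theorem} (Minty's painting lemma, the LP-dual of Hoffman's circulation theorem): a tension meeting prescribed bounds on each edge exists if and only if for every cycle the forward lower bounds do not exceed the backward upper bounds. This shows the per-cycle conditions above are not merely necessary but sufficient for the existence of $\ell\in\calL$ with $\ell_e\in(a_e-1,a_e]$. It then remains to (i) reduce the infinitely many cycles to a finite generating set --- the cycle space of the connected Hasse diagram of $\widehat P$ has dimension $n-(d+1)+1=n-d$, matching $\rank\Cl(R)$ --- and (ii) transport the conditions to the class-group coordinates. Since the conic property depends only on $\bfa$ modulo $\sigma(\sfM)=\calL\cap\ZZ^n$ (here we use that $\Cl(R)\cong\ZZ^{n-d}$ is torsion-free, hence $\sigma(\sfM)$ is saturated), and since the identification $\Cl(R)\cong\ZZ^{n-d}$ is precisely the pairing of $\bfa$ with a basis of the cycle space, each coordinate $a_i$ is a circulation $\sum_{C_i^-}a_e-\sum_{C_i^+}a_e$. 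The per-cycle inequalities thus become explicit linear inequalities in $(a_1,\dots,a_{n-d})$, and verifying that these coincide with the inequalities of $\calC(P)$ completes both directions.

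The hard part will be twofold. First, handling the half-open bounds and integrality so that the feasibility region matches $\calC(P)$ with no off-by-one error: the lower bounds $a_e-1<\ell_e$ are strict, so the closed-interval feasibility theorem must be adapted --- for instance by perturbing $\bfx$ generically so that $\sigma_e(\bfx)\notin\ZZ$ for all $e$, or by exploiting that every relevant circulation is an integer. Second, the reduction to a manageable family of cycles and the identification of the resulting inequalities with $(\ref{ccp})$: one must choose the basis cycles compatibly with the fixed isomorphism $\Cl(R)\cong\ZZ^{n-d}$ and check combinatorially that the circulation inequalities over all cycles follow from those over the basis cycles. This is where the poset structure --- cycles arising from pairs of saturated chains sharing endpoints in $\widehat P$ --- enters and must be bookkept carefully.
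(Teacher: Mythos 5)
Your reduction of the conic condition to tension feasibility on the Hasse diagram of $\widehat{P}$ is correct, and the two main engines work: the per-cycle interval-overlap argument gives necessity (this is the paper's second and third steps in disguise), and the Minty/Hoffman tension feasibility theorem, adapted to the half-open bounds by an $\epsilon$-perturbation (legitimate here because the data are integral, so one may shrink $(a_e-1,a_e]$ to $[a_e-1+\epsilon,a_e]$ with $\epsilon\le 1/n$ without affecting the cycle conditions), gives the \emph{simultaneous} existence of the required point --- a step that the paper's own converse treats quite loosely, so this part of your plan is in fact a cleaner route than the published argument. The genuine gap is in your steps (i)--(ii): you propose to deduce the circulation inequalities for \emph{all} cycles from those for a \emph{basis} of the cycle space ($n-d$ fundamental cycles chosen compatibly with $\Cl(R)\cong\ZZ^{n-d}$). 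This is false, and it also mismatches the definition of $\calC(P)$: the inequalities in (\ref{ccp}) are indexed by \emph{circuits} (chordless cycles), whose number in general exceeds $n-d$.

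Concretely, take the Segre product $\kk[x_1,y_1]\#\kk[x_2,y_2]\#\kk[x_3,y_3]$, i.e., $P$ consists of three pairwise incomparable elements, so $n=6$, $d=4$, $n-d=2$, while $\widehat{P}$ has three circuits $C_{1,2},C_{1,3},C_{2,3}$. With the paper's spanning tree the two fundamental cycles are $C_{1,3}$ and $C_{2,3}$, giving $|z_1|\le 1$ and $|z_2|\le 1$; the remaining circuit $C_{1,2}$ gives $|z_1-z_2|\le 1$. The class $(z_1,z_2)=(1,-1)$ satisfies the two basis inequalities but is not conic, since it violates the circulation condition on $C_{1,2}$ --- a condition your own necessity argument shows every conic class must satisfy. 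The reason linear-algebraic generation does not help is that when two cycles are added in the cycle space, the edges in their overlap cancel in the circulation but not in the cardinality bounds: summing the inequalities for $C_{1,3}$ and $-C_{2,3}$ only yields $|z_1-z_2|\le 2$. The correct finite family is the set of circuits, and the reduction ``all cycles from circuits'' is exactly the paper's fourth step: if a cycle $C$ has a chord, split it into two cycles $C_1,C_2$ meeting only along the chord, which lies in $X_{C_1}^+\cap X_{C_2}^-$; then the circulation of $C$ is the sum of the two circulations and $|X_C^\pm|=|X_{C_1}^\pm|+|X_{C_2}^\pm|-1$, so summing the two inequalities yields the one for $C$, and one concludes by induction on the length of $C$. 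With ``basis'' replaced by ``circuits'' and this lemma supplied, your argument closes both directions and is otherwise a valid alternative to the paper's proof.
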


On the other hand, Van den Bergh introduced the notion of \emph{a non-commutative crepant resolution} (abbreviated as NCCR, see Definition~\ref{def_NCCR}), which is stronger than an NCR. 
For some nice singularities, an NCCR is derived equivalent to the usual crepant resolution, 
and hence this gives a new interaction between algebraic geometry, commutative algebra, and representation theory of algebras (see e.g., \cite{BKR,VdB2,Wem}). 
Subsequently, relationships with cluster tilting theory and their variants have been discovered (see e.g., \cite{Iya,IR,IW1}).  

One of the important problems is the existence of NCCRs for a given singularity. 
Here, we only mention results regarding the existence of NCCRs of toric rings. 
A toric ring defined by a simplicial cone is considered as a quotient singularity associated with a finite abelian group, in which case an NCCR is given by the skew group algebra (see e.g., \cite{VdB3,IW1}), and this is equivalent to the condition that a given ring admits a ``steady splitting" NC(C)R \cite{IN}. 
It is also known that a toric ring whose class group is $\ZZ$ has an NCCR \cite{VdB3}. 
For a $3$-dimensional Gorenstein toric ring, NCCRs can be obtained from consistent dimer models (see \cite{Bro,IU}, and also \cite{SpVdB3}). 
Concerning the existence of NCCRs for higher dimensional toric rings, there are several results e.g., \cite{SpVdB,SpVdB2}, but it is still open in general. 

As we mentioned in Theorem~\ref{motivation_thm} (2), the endomorphism ring of the direct sum of all conic divisorial ideals is an NCR. 
However, it is not an NCCR in general. In \cite{SpVdB}, some NCCRs of higher dimensional toric rings have been constructed under the assumption of ``quasi-symmetric", and the idea used in such a construction is to consider a part of conic classes. 
This strategy is also valid for some Hibi rings even if it is not quasi-symmetric as we will see later. 
Since we know the precise description of conic divisorial ideals of Hibi rings as in Theorem~\ref{conic_thm_intro}, we apply this description for constructing NCCRs. 
In this paper, we especially consider NCCRs for the Segre product of polynomial rings, which is the coordinate ring of the Segre embedding, and this is realized as a Hibi ring (see Example~\ref{segre}). 
Thus, this ring is quite important in both of commutative algebra and algebraic geometry. 
Using our observation on conic divisorial ideals and the method developed in \cite{SpVdB}, we show the following theorem. 
(We remark that this $R$ is not quasi-symmetric except the case of $t=2$.)

\begin{theorem}[{see Theorem~\ref{NCCR_Segre}}]
Let $R=S_1\# S_2\#\cdots\# S_t$ be the Segre product of $r$-dimensional polynomial rings $S_1, \cdots, S_t$ with $r\ge2$. 
Then, $R$ admits an NCCR $\End_R(M)$ where $M$ is the direct sum of some conic divisorial ideals. 
\end{theorem}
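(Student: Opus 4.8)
The plan is to realize $R$ as a Hibi ring, pin down its conic divisorial ideals by means of the classification in Theorem~\ref{thm:conic}, and then verify directly that the endomorphism ring of an appropriate direct sum of them satisfies the two defining conditions of an NCCR in Definition~\ref{def_NCCR}: finite global dimension and maximal Cohen--Macaulayness. First I would record that $R=S_1\#\cdots\# S_t$ is the Hibi ring $\kk[P]$ of the poset $P$ obtained as the disjoint union of $t$ chains, each having $r-1$ elements; indeed the Hibi ring of a single chain on $r-1$ elements is the polynomial ring $S_i$ in $r$ variables, and the Segre product corresponds to the disjoint union of the underlying posets. In particular $\widehat{P}$ is pure, so $R$ is Gorenstein (as well as a normal Cohen--Macaulay domain) and $\Cl(R)\cong\ZZ^{t-1}$; this is the setting in which Definition~\ref{def_NCCR} applies, and we must produce a reflexive $M$ with $\End_R(M)$ maximal Cohen--Macaulay of finite global dimension.

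Next I would use Theorem~\ref{thm:conic} to write down $\calC(P)\cap\ZZ^{t-1}$ explicitly and to select the summands of $M$. Here a subtlety must be respected: taking $M$ to be the sum of \emph{all} conic divisorial ideals need not give an NCCR, because $\End_R(M)=\bigoplus_{i,j}\Hom_R(T(\bfa_i),T(\bfa_j))$ and the Hom-modules $\Hom_R(T(\bfa_i),T(\bfa_j))\cong T(\bfa_j-\bfa_i)$ are divisorial ideals whose classes are differences of conic classes and can fail to be maximal Cohen--Macaulay. I would therefore choose $M=\bigoplus_{\bfa}T(\bfa)$ with $\bfa$ ranging over a carefully chosen ``window'' of conic classes (a fundamental-domain-shaped subset, the analogue of the twists $\calO(a_1,\dots,a_t)$ with $0\le a_s\le r-1$ on $(\mathbb{P}^{r-1})^t$ taken modulo the polarization), arranged so that every difference $\bfa_j-\bfa_i$ again lies in the maximal Cohen--Macaulay range.

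With $M$ fixed, the Cohen--Macaulay condition becomes a finite combinatorial check: each $\Hom_R(T(\bfa_i),T(\bfa_j))\cong T(\bfa_j-\bfa_i)$ must be shown to be maximal Cohen--Macaulay. Using the explicit description of $\calC(P)$ together with a criterion for maximal Cohen--Macaulayness of divisorial ideals of a Hibi ring (equivalently, the vanishing of the intermediate cohomology of the corresponding line bundles on $(\mathbb{P}^{r-1})^t$, which by the K\"unneth formula reduces to Bott vanishing on each factor $\mathbb{P}^{r-1}$), I would verify that the chosen window has the property that all of its pairwise differences avoid the non-Cohen--Macaulay region. This is exactly where the special shape of $P$ (equal chain lengths, hence the Gorenstein symmetry responsible for crepancy) enters.

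The harder half, and the main obstacle, is finite global dimension of $\End_R(M)$. Since $M$ need not exhaust all conic ideals, one cannot simply invoke a Morita equivalence $\End_R(M)\sim\End_R(R^{1/m})$ from Theorem~\ref{motivation_thm}. Instead I would follow the method of \cite{SpVdB}: realize $R$ as the ring of invariants of the torus $(\kk^\ast)^{t-1}$ acting on $S_1\otimes\cdots\otimes S_t$, and build explicit Koszul/Beilinson-type projective resolutions of the simple $\End_R(M)$-modules from the combinatorics of the window, thereby bounding the global dimension by $\dim R$. Concretely this amounts to tensoring the Beilinson resolutions on the factors $\mathbb{P}^{r-1}$, and it is precisely the identification of the window's summands with conic divisorial ideals (Theorem~\ref{motivation_thm}(1) together with Theorem~\ref{thm:conic}) that keeps the bookkeeping manageable. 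Combining finite global dimension with the maximal Cohen--Macaulayness verified above, and recalling that $M$ is reflexive as a direct sum of divisorial ideals, yields that $\End_R(M)$ is an NCCR of $R$.
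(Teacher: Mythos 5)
Your proposal follows essentially the same route as the paper's proof of Theorem~\ref{NCCR_Segre}: the same realization of $R$ as the Hibi ring of a disjoint union of $t$ chains of equal length, the same choice of window $\calL=\{(c_1,\dots,c_{t-1})\in\calC(R)\mid 0\le c_i\le r-1\}$ of conic classes, the same reduction of the MCM condition to showing that all pairwise differences of window elements remain in the MCM range, and the same \v{S}penko--Van den Bergh Koszul-resolution machinery (the paper's Lemmas~\ref{key_lem1} and~\ref{key_lem2}, applied via an induction over nested subsets of characters) for finite global dimension. The only cosmetic difference is that you verify MCM-ness of the difference classes by K\"unneth/Bott vanishing for line bundles on $(\mathbb{P}^{r-1})^t$, whereas the paper uses Bruns's equivalent combinatorial criterion (Lemmas~\ref{MCM_lem1} and~\ref{MCM_lem2}).
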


Furthermore, by applying the operation so-called ``mutation" to the NCCR shown in the above theorem, we can obtain other modules giving NCCRs of $R$, 
whose endomorphism rings are all derived equivalent (see Section~\ref{sec_mutation}).

\medskip

The content of this paper is the following. In Section~\ref{sec_conic}, we review some facts regarding conic divisorial ideals. 
We then give the precise description of conic divisorial ideals for Hibi rings in terms of their associated poset structure (see Theorem~\ref{thm:conic}). 
In Section~\ref{sec_NCCR}, we pay attention to the Segre product of polynomial rings and give an NCCR of it (see Theorem~\ref{NCCR_Segre}). 
In Section~\ref{sec_mutation}, we introduce the mutation, and apply it to the NCCR obtained in Section~\ref{sec_NCCR}. 

\subsection*{Notations and Conventions} 
Throughout this paper, we suppose that $\kk$ is an algebraically closed field. 
We denote 
by $\add_RM$ the category consisting of direct summands of finite direct sums of some copies of an $R$-module $M$, 
denote by $\refl(R)$ the category of reflexive $R$-modules. 
We denote by $\Cl(R)$ the class group of $R$, and let $\Cl(R)_\RR=\Cl(R)\otimes_\ZZ\RR$. 
We also denote by $\rmX(G)$ the character group of $G$, and let $\rmX(G)_\RR=\rmX(G)\otimes_\ZZ\RR$. 
Similarly, we denote by $\rmY(G)$ the group of one parameter subgroups of $G$, and let $\rmY(G)_\RR=\rmY(G)\otimes_\ZZ\RR$. 
In addition, we denote the $R$-dual functor by $(-)^*=\Hom_R(-,R)$.

\section{Conic divisorial ideals of Hibi rings}
\label{sec_conic}

\subsection{Preliminaries on conic divisorial ideals}
Let the notation be the same as in the previous section. 
In this subsection, we review some basic facts concerning divisorial ideals of a toric ring $R=\kk[\tau^\vee\cap\sfM]$ 
with $\tau=\mathrm{Cone}(v_1, \cdots, v_n)\subset\sfN_\RR$. 
As we mentioned before, any divisorial ideal takes the form $T(\bfa)$ with $\bfa\in\ZZ^n$. 
It is known that there is an exact sequence
\begin{equation}
\label{cl_seq}
0\rightarrow\sfM=\ZZ^d\xrightarrow{\sigma(-)}\ZZ^n\rightarrow\Cl(R)\rightarrow0, 
\end{equation}
and hence we see that for $\bfa, \bfa^\prime\in\ZZ^n$, $T(\bfa)\cong T(\bfa^\prime)$ 
if and only if there exists ${\bf y}\in\sfM$ such that $a_i=a_i^\prime+\sigma_i({\bf y})$ for all $i=1, \cdots, n$ 
(see e.g. \cite[Corollary~4.56]{BG2}). 
Let $\fkp_i\coloneqq T(\delta_{i1},\cdots,\delta_{in})$ where $\delta_{ij}$ is the Kronecker delta, 
and consider the prime divisor $\calD_i\coloneqq \rmV(\fkp_i)=\Spec R/\fkp_i$ on $\Spec R$. 
Then, the divisorial ideal $T(\bfa)=T(a_1,\cdots,a_n)$ corresponds to the Weil divisor $-(a_1\calD_1+\cdots +a_n\calD_n)$. 
In addition, the exact sequence (\ref{cl_seq}) gives the relations on these divisors. 
Precisely, a divisor $a_1\calD_1+\cdots +a_n\calD_n$ is zero in $\Cl(R)$ if $a_i=\langle\bfx,v_i\rangle$ for all $i=1,\cdots,n$ and some $\bfx\in\sfM$. 
In particular, if we take the $j$-th basic vector $\bfe_j\coloneqq(\delta_{j1},\cdots,\delta_{jd})\in\sfM$, 
we have that $$\langle\bfe_j,v_1\rangle D_1+\cdots+\langle\bfe_j,v_n\rangle D_n=0$$ in $\Cl(R)$ for all $j=1,\cdots,d$. 
Thus, we have that 
\begin{equation}
\label{relation_divisor}
v_{1,j}\calD_1+\cdots+v_{n,j}\calD_n=0 
\end{equation} 
for all $j=1,\cdots,d$ where $v_i\coloneqq(v_{i,1},\cdots,v_{i,d})\in\ZZ^d$. 
A divisorial ideal $T(\bfa)$ is called \emph{conic} if we can take ${\bf x}\in\sfM_\RR$ satisfying $\bfa=\ulcorner\sigma({\bf x})\urcorner$, 
equivalently there exists ${\bf x}\in\sfM_\RR$ such that $a_i-1<\sigma_i({\bf x})\le a_i$ for all $i=1,\cdots,n$. 
If ${\bf x}^\prime={\bf x}+{\bf y}$ with ${\bf y}\in\sfM$, then we see that $T(\sigma(\bfx^\prime))\cong T(\sigma(\bfx))$. 
Therefore, a conic divisorial ideal is determined by an element in $\sfM_\RR/\sfM$ up to isomorphism. 
We then discuss another description of conic divisorial ideals (cf. \cite[10.6]{SpVdB}). 
Using the exact sequence (\ref{cl_seq}), we see that 
\begin{equation*}
\sfM \cong \{(b_i)_i\in\ZZ^n \mid \sum_ib_i\calD_i=0 \ \text{in}\ \Cl(R) \}, 
\end{equation*}
\vspace{-0.3cm}
\begin{equation}
\label{relation_Rdiv}
\sfM_\RR \cong \{(b_i)_i\in\RR^n \mid \sum_ib_i\calD_i=0 \ \text{in}\ \Cl(R)_\RR \}, 
\end{equation}
where $\sigma_i(\bfx)=b_i$ for $\bfx\in\sfM$ and $i=1,\cdots, n$. 
Thus, we may consider $\bfu\in\sfM_\RR$ as $(u_i)_i\in\RR^n$ such that $\sum_iu_i\calD_i=0$ in $\Cl(R)_\RR$ where $\sigma_i(\bfu)=u_i$, 
and hence we may write 
\begin{equation*}
\TT(\sigma(\bfu))=\{(b_i)_i\in\ZZ^n \mid \sum_ib_i\calD_i=0 \ \text{in}\ \Cl(R),\ b_i\ge u_i \}. 
\end{equation*}
Here, we write $u_i=a_i+\delta_i$ with $a_i\in\ZZ$ and $\delta_i\in(-1,0]$, and set $c_i\coloneqq b_i-a_i\ge\delta_i$, $\alpha=-\sum_i a_i\calD_i\in\Cl(R)$. 
Then, we see that 
\begin{equation*}
\TT(\sigma(\bfu))=(a_i)_i+\{(c_i)_i\in\ZZ^n_{\ge0} \mid \sum_ic_i\calD_i=\alpha \}, 
\end{equation*}
and hence $T(\sigma(\bfu))$ is isomorphic to the divisorial ideal corresponding to $\alpha=-\sum_i a_i\calD_i$, which is $T(a_1,\cdots,a_n)$. 
Here, we note that $\ulcorner\sigma_i(\bfu)\urcorner=a_i$.  
Furthermore, we remark that $0=\sum_iu_i\calD_i=-\alpha+\sum_i\delta_i\calD_i$ in $\Cl(R)_\RR$, thus $\alpha=\sum_i\delta_i\calD_i$ in $\Cl(R)_\RR$. 
We easily follow the converse of this argument. 

As a conclusion of these observations, we have the following lemma. 

\begin{lemma}[{see also {\cite[Corollary 1.2]{Bru}}, \cite[Proposition~3.2.3]{SmVdB}}] 
\label{conic_characterization}
Let the notation be the same as above. 
There exists a one-to-one correspondence between the followings. 
\begin{enumerate}[\rm (1)]
\setlength{\parskip}{0pt} 
\setlength{\itemsep}{3pt}
\item A conic divisorial ideal $T(a_1,\cdots,a_n)$. 
\item An $\RR$-divisor $\sum_i\delta_i\calD_i$ with $(\delta_i)_i\in(-1,0]^n$ up to equivalence. 
Here, we say that two $\RR$-divisors are equivalent if their difference is in $\sfM_\RR$ $($see {\rm(\ref{relation_Rdiv})}$)$. 
\item A full-dimensional cell of the decomposition of the semi-open cube $(-1,0]^d$ by 
hyperplanes $H_{i,m}=\{{\bf x}\in\sfM_\RR \mid \sigma_i({\bf x})=m \}$ for some $m \in\ZZ$ and $i=1,\cdots,n$.  
\end{enumerate}

Here, we identify the cell $\bigcap_{i=1}^n L_{i,a_i}$ with $T(a_1,\cdots,a_n)$, 
where $L_{i,a_i}=\{{\bfx} \in \sfM_\RR \mid a_i-1<\sigma_i({\bfx}) \leq a_i\}$. 
\end{lemma}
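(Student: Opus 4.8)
The plan is to treat all three descriptions as different bookkeepings of a single datum, namely a point $\bfx\in\sfM_\RR$, and to show that the assignment $\bfx\mapsto(\bfa,(\delta_i)_i,\,\text{cell})$ descends to the desired bijections. Concretely, for $\bfx\in\sfM_\RR$ I put $a_i=\ulcorner\sigma_i(\bfx)\urcorner$ and $\delta_i=\sigma_i(\bfx)-a_i\in(-1,0]$; this produces simultaneously the conic ideal $T(\bfa)$ of (1), the divisor $\sum_i\delta_i\calD_i$ of (2), and the half-open cell $\bigcap_iL_{i,a_i}$ of (3). The whole statement then reduces to understanding the partition of $\sfM_\RR$ into the half-open cells $\bigcap_iL_{i,a_i}$ together with its $\sfM$-periodicity, since by $(\ref{cl_seq})$ two ideals $T(\bfa)$ and $T(\bfa')$ are isomorphic precisely when $\bfa-\bfa'=\sigma(\mathbf{y})$ for some $\mathbf{y}\in\sfM$, i.e. when the corresponding cells differ by an $\sfM$-translation.

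I would first dispose of (1)$\Leftrightarrow$(3). Since $L_{i,a_i}=\{\bfx:\ulcorner\sigma_i(\bfx)\urcorner=a_i\}$, the cells $\bigcap_iL_{i,a_i}$ are exactly the nonempty level sets of the map $\bfx\mapsto\ulcorner\sigma(\bfx)\urcorner$; because each $\delta_i$ lies in the half-open interval $(-1,0]$, every $\bfx$ lies in exactly one cell, so these cells genuinely partition $\sfM_\RR$, and a cell is nonempty if and only if the corresponding $T(\bfa)$ is conic by Definition~\ref{def_conic}. The cone $\tau$ being full-dimensional forces $\tau^\vee$ to be full-dimensional, whence the functionals $\sigma_i$ span the dual space of $\sfM_\RR$ and each cell is a bounded region; combined with $\sfM$-periodicity this already yields finiteness. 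Passing to the fundamental domain $(-1,0]^d$ for $\sfM_\RR/\sfM$ then records each $\sfM$-orbit of cells exactly once, matching isomorphism classes of conic ideals.

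Next I would record (1)$\Leftrightarrow$(2), which is essentially the computation preceding the lemma: with $\delta_i=\sigma_i(\bfx)-a_i$ one has $\sum_i\sigma_i(\bfx)\calD_i=0$ in $\Cl(R)_\RR$ by $(\ref{cl_seq})$ tensored with $\RR$, hence $\sum_i\delta_i\calD_i=-\sum_ia_i\calD_i=\alpha$, the class of $T(\bfa)$. The key observation linking this with (3) is that the class $\sum_i\delta_i\calD_i\in\Cl(R)_\RR$ is constant on each cell: if $\bfx,\bfx'$ lie in the same cell, then $(\delta_i(\bfx)-\delta_i(\bfx'))_i=\sigma(\bfx-\bfx')$ lies in the image of $\sfM_\RR$ and therefore represents $0$ in $\Cl(R)_\RR$. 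Thus a cell determines, and is determined by, the integral class $\alpha$, giving (2)$\Leftrightarrow$(3).

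The point that needs the most care — and the main obstacle — is surjectivity onto the \emph{full-dimensional} cells: a priori a conic ideal could be realized only by some $\bfx$ lying in a lower-dimensional (degenerate) cell, which (3) would not see. I would rule this out by a perturbation argument. Let $S=\{i:\sigma_i(\bfx)=a_i\}$ be the set of upper faces on which $\bfx$ sits. The subcone $\mathrm{Cone}(v_i:i\in S)\subseteq\tau$ is pointed because $\tau$ is strongly convex, so it admits a strictly separating functional; dually there is $\mathbf{w}\in\sfM_\RR$ with $\sigma_i(\mathbf{w})<0$ for all $i\in S$. For small $\epsilon>0$ the point $\bfx+\epsilon\mathbf{w}$ then satisfies $a_i-1<\sigma_i(\bfx+\epsilon\mathbf{w})<a_i$ for every $i$ (strictly: for $i\in S$ because we moved off the upper face, and for $i\notin S$ because $\sigma_i(\bfx)$ was already strictly interior), so $\bfx+\epsilon\mathbf{w}$ lies in the open cell and realizes the same $\bfa$. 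Hence every conic ideal arises from a full-dimensional cell. Together with the injectivity from the previous paragraph this completes the three bijections; I expect the verification of this perturbation step, and the careful matching of the half-open boundary convention so that each $\sfM$-orbit is counted exactly once, to be where the real work lies.
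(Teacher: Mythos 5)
Your proposal is correct, and its backbone --- sending $\bfx\in\sfM_\RR$ to $a_i=\ulcorner\sigma_i(\bfx)\urcorner$ and $\delta_i=\sigma_i(\bfx)-a_i$, and using the sequence (\ref{cl_seq}) to identify the integral class $\alpha=-\sum_ia_i\calD_i$ of $T(\bfa)$ with $\sum_i\delta_i\calD_i$ in $\Cl(R)_\RR$, everything depending only on the image of $\bfx$ in $\sfM_\RR/\sfM$ --- is exactly the computation the paper carries out in the discussion preceding the lemma, whose converse it leaves to the reader. What you add, and what the paper genuinely omits, is the proof that every conic ideal corresponds to a \emph{full-dimensional} cell: a priori $T(\bfa)$ could be realized only on a degenerate cell, and this must be excluded for item (3) as stated. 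Your perturbation argument is correct and uses strong convexity of $\tau$ in an essential way: the generators indexed by $S=\{i\mid\sigma_i(\bfx)=a_i\}$ span a pointed subcone, so some $\mathbf{w}$ satisfies $\sigma_i(\mathbf{w})<0$ for all $i\in S$, and $\bfx+\epsilon\mathbf{w}$ lies in the open cell for small $\epsilon>0$; without pointedness this fails (for $\sigma_1=x$, $\sigma_2=-x$ the cell $L_{1,0}\cap L_{2,0}=\{0\}$ is nonempty but zero-dimensional), so the step cannot be waved away. Two caveats. First, your route to boundedness (``$\tau$ full-dimensional forces $\tau^\vee$ full-dimensional, whence the $\sigma_i$ span'') is misstated: $\tau^\vee$ is full-dimensional because $\tau$ is pointed, and the $\sigma_i$ span the dual space because the $v_i$ span $\sfN_\RR$, i.e.\ because $\tau$ is full-dimensional; the conclusion you use is nevertheless correct. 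Second, and more substantively, your claim that the cube $(-1,0]^d$ ``records each $\sfM$-orbit of cells exactly once'' is not literally true if cells of the cube decomposition are read as subsets of the cube: already for the cone over a square, the orbit of the cell of $R$ meets the cube in pieces of two distinct translates, namely the cells labelled $(0,0,0,0)$ and $(0,-1,0,-1)$, which differ by $\sigma((-1,0,0))$. The bijection must be stated with cells taken modulo the identification $\bfa\sim\bfa+\sigma(\mathbf{y})$, $\mathbf{y}\in\sfM$, i.e.\ as cells of the induced decomposition of the torus $\sfM_\RR/\sfM$; this is precisely the bookkeeping your (2)$\Leftrightarrow$(3) paragraph provides, and since the paper's own formulation of item (3) carries the same looseness, I regard this as a point of interpretation rather than a gap in your argument.
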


As we mentioned in Theorem~\ref{motivation_thm}, conic divisorial ideals are precisely modules appearing in $R^{1/m}$ 
as direct summands for $m\gg 0$. 
Since $R^{1/m}$ is an MCM $R$-module, a conic divisorial ideal is also an MCM $R$-module. 
We notice that the number of non-isomorphic conic divisorial ideals is finite because that of rank one MCM $R$-modules is finite \cite[Corollary~5.2]{BG1}. 
Also, there exists a divisorial ideal that is a rank one MCM module but not conic (see e.g., \cite{Bae,Bru}).  
If $\tau$ is simplicial, every divisorial ideal is conic, because a torsion element in $\operatorname{Cl}(R)$ is conic \cite[Theorem~3.2]{BG1}. 

\subsection{Preliminaries on Hibi rings}
\label{subsec_Hibi}
We now turn our attention to a certain toric ring which is called a Hibi ring. 
Let $P=\{p_1,\cdots,p_{d-1}\}$ be a finite partially ordered set equipped with a partial order $\prec$. 
Throughout the paper, we will call it a \emph{poset} for short. Let 
\begin{equation*}
\calO(P)=\{(x_1,\cdots,x_{d-1}) \in \RR^{d-1} \mid  \, x_i \geq x_j \text{ if }p_i \preceq p_j \text{ in }P, \;  \, 0 \leq x_i \leq 1 \text{ for } i=1,\cdots,d-1\}.
\end{equation*}
It is known that $\calO(P)$ is a lattice polytope \cite[Corollary 1.3]{Sta2}, called the {\em order polytope} of a poset $P$. 

For a poset $P$, let $\kk[P]$ denote a toric ring defined by setting 
$$\kk[P]=\kk[{\bf X}^\alpha Y^n \mid \alpha \in n\calO(P) \cap \ZZ^{d-1}, \; n \in \ZZ_{\geq 0}],$$
where ${\bf X}^\alpha=X_1^{\alpha_1}\cdots X_{d-1}^{\alpha_{d-1}}$ for $\alpha=(\alpha_1,\cdots,\alpha_{d-1}) \in \ZZ^{d-1}$. 
This $\kk$-algebra is called the {\em Hibi ring} associated with $P$. The followings are some fundamental properties on Hibi rings, 
which were originally proved in \cite{Hibi}: 
\begin{itemize}
\setlength{\parskip}{0pt} 
\setlength{\itemsep}{3pt}
\item $\dim \kk[P]=|P|+1$; 
\item $\kk[P]$ is a standard graded CM normal domain, where the grading is defined by 
$\deg ({\bf X}^\alpha Y^n) = n$ for $\alpha \in n \calO(P) \cap \ZZ^{d-1}$; 
\item $\kk[P]$ is an algebra with straightening laws on $P$. 
\item It is known that $\kk[P]$ is Gorenstein if and only if $P$ is pure, 
where we say that $P$ is \emph{pure} if all of the maximal chains $p_{i_1} \prec \cdots \prec p_{i_\ell}$ have the same length. 
\end{itemize}

\begin{remark}
Originally, in the paper \cite{Hibi}, Hibi studied a quotient of a polynomial ring which looks like 
$\kk[X_\alpha \mid \alpha \in I(P)]\big/(X_\alpha X_\beta - X_{\alpha \cup \beta} X_{\alpha \cap \beta})$. 
Here, $I(P)$ denotes the set of all poset ideals of $P$, 
where a subset $I \subset P$ is called a poset ideal if $I$ satisfies that $p \in I$ and $p' \prec p$ imply $p' \in I$, 
and especially $I(P)$ is a distributive lattice with respect to the partial order defined by inclusion. 
In particular, he showed that this ring is a toric ring, and hence CM normal domain. 
On the other hand, our description $\kk[P]$ is based on the order polytope of $P$ studied in \cite{Sta2}, and of course $\kk[P]$ is
isomorphic to the original one. 
\end{remark}

Let $P=\{p_1,\cdots,p_{d-1}\}$. For $p_i, p_j \in P$, we say that $p_i$ {\em covers} $p_j$ 
if $p_j \prec p_i$ and there is no $p' \in P$ with $p_i \neq p'$ and $p_j \neq p'$ such that $p_j \prec p' \prec p_i$. 
Set $\widehat{P}=P \cup \{\hat{0}, \hat{1}\}$, 
where $\hat{0}$ (resp. $\hat{1}$) is the unique minimal (resp. maximal) element not belonging to $P$. 
Let us denote $p_0=\hat{0}$ and $p_d=\hat{1}$. We say that $e=\{p_i,p_j\}$, where $0 \leq i \not= j \leq d$, 
is an {\em edge} of $\widehat{P}$ if $e$ is an edge of the Hasse diagram of $\widehat{P}$ (i.e., $p_i$ covers $p_j$ or $p_j$ covers $p_i$). 
For each edge $e=\{p_i,p_j\}$ of $\widehat{P}$ with $p_i \prec p_j$, 
let $\sigma_e$ be a linear form in $\RR^d$ defined by 
\begin{align*}
\sigma_e({\bf x})\coloneqq
\begin{cases}
x_i-x_j, \;&\text{ if }j \not= d, \\
x_i, \; &\text{ if }j=d 
\end{cases}
\end{align*}
for ${\bf x}=(x_0,x_1,\cdots,x_{d-1})$. Let $\tau_P=\mathrm{Cone}(\sigma_e \mid e \text{ is an edge of }\widehat{P}) \subset \sfN_\RR=\RR^d$. 
Then, it is known that $\kk[P]=\kk[\tau_P^\vee \cap \ZZ^d]$. 
Let $e_1,\cdots,e_n$ be all the edges of $\widehat{P}$. We set a linear form $\sigma:\RR^d \rightarrow \RR^n$ by 
$$\sigma({\bf x})=(\sigma_{e_1}({\bf x}),\cdots,\sigma_{e_n}({\bf x})) \in \RR^n$$ 
for ${\bf x} \in \RR^d$. 

\subsection{Conic divisorial ideals of Hibi rings}
\label{subsec_conic_Hibi}

In this subsection, we consider conic divisorial ideals of Hibi rings. We retain the notations of the previous subsection. 

First, in order to discuss the class group of a Hibi ring, we prepare some terminologies. 
We say that a sequence $C=(p_{k_1},\cdots,p_{k_m})$ is a {\em cycle} in $\widehat{P}$ if $C$ forms a cycle 
in the Hasse diagram of $\widehat{P}$ (i.e., $p_{k_i} \not= p_{k_j}$ for $1 \leq i \not= j \leq m$ 
and each $\{p_{k_i},p_{k_{i+1}}\}$ is an edge of $\widehat{P}$ for $1 \leq i \leq m$, where $p_{k_{m+1}}=p_{k_1}$). 
Moreover, we say that a cycle $C$ is a {\em circuit} if $\{p_{k_i},p_{k_j}\}$ is not an edge of $\widehat{P}$ 
for any $1 \leq i,j \leq m$ with $|i-j| \geq 2$. 
We say that a set of $d$ edges $e_{i_1},\cdots,e_{i_d}$ of $\widehat{P}$ is a {\em spanning tree} if 
they form a spanning tree of the Hasse diagram of $\widehat{P}$, 
that is, any element in $\widehat{P}$ is an endpoint of some edge $e_{i_j}$ and the edges $e_{i_1},\cdots,e_{i_d}$ do not form cycles. 
We remark that a spanning tree is not unique. 
For $p \in \widehat{P} \setminus \{\hat{1}\}$, let $U(p)$ denote the set of all elements in $\widehat{P}$ which cover $p$. 
Similarly, for $p \in \widehat{P} \setminus \{\hat{0}\}$, let $D(p)$ denote the set of all elements in $\widehat{P}$ which are covered by $p$. 

Then, we discuss the class group $\Cl(\kk[P])$ for describing conic divisorial ideals of Hibi rings. 
By definition of $\sigma(-)$ and (\ref{relation_divisor}), we see that 
the prime divisor $\calD_e$ indexed by the edges $e$ of $\widehat{P}$ satisfies the relations: 
\begin{align}
\label{relation_div_hibi}
\sum_{q \in U(p)} \calD_{\{q,p\}}=\sum_{q' \in D(p)} \calD_{\{p,q'\}} \text{ for }p \in \widehat{P} \setminus \{\hat{0}, \hat{1}\}, 
\; \text{ and }\; \sum_{q \in U(\hat{0})}\calD_{\{q,p_0\}}=0. 
\end{align}
In particular, we can take prime divisors corresponding to edges not contained in a spanning tree as generators of $\Cl(\kk[P])$, 
thus we have that $\Cl(\kk[P]) \cong \ZZ^n/\sigma(\ZZ^d)\cong  \ZZ^{n-d}$ (see \cite{HHN}). 
More precisely, we fix a spanning tree of $\widehat{P}$ and let $e_1,\cdots,e_d$ be its edges for simplicity of notation. 
Thus, let $e_{d+1},\cdots,e_n$ be the remaining edges of $\widehat{P}$. 
Then, using (\ref{relation_div_hibi}) any Weil divisor can be described as $\sum_{i=1}^{n-d}a_i\calD_{e_{d+i}}$, and we identify this with $(a_1,\cdots,a_{n-d})\in\ZZ^{n-d}$. 

In what follows, we will give a correspondence of which elements in $\Cl(\kk[P])\cong \ZZ^{n-d}$ describe the conic divisorial ideals. 
For a cycle $C=(p_{k_1},\cdots,p_{k_m})$ in $\widehat{P}$, let 
\begin{align*}
X_C^+&=\{\{p_{k_i},p_{k_{i+1}}\} \mid 1 \leq i \leq m, \;  p_{k_i} \prec p_{k_{i+1}}\}, \\
X_C^-&=\{\{p_{k_i},p_{k_{i+1}}\} \mid 1 \leq i \leq m, \; p_{k_{i+1}} \prec p_{k_i}\}, \\
Y_C^{\pm}&=X_C^{\pm} \cap \{e_1,\cdots,e_d\}, \text{ and }\; 
Z_C^{\pm} = X_C^\pm \cap \{e_{d+1},\cdots,e_n\}, 
\end{align*}
where $p_{k_{m+1}}=p_{k_1}$. 

\begin{example}
For the Hasse diagram of $\widehat{P}$ shown in the left hand side of Figure~\ref{ex_Xc}, we fix the spanning tree $\{e_2, e_3,\cdots,e_8 \}$. 
For the cycle $C=(p_1,p_2,p_3,p_5,p_4,p_0)$, we have that 
\begin{center}
\begin{tabular}{lll}
$X^+_C=\{e_1, e_2, e_3 \}$, & $Y^+_C=\{e_2, e_3 \}$, & $Z^+_C=\{e_1 \}$, \\
$X^-_C=\{e_5, e_6, e_9 \}$, & $Y^-_C=\{e_5, e_6 \}$,& $Z^-_C=\{e_9 \}$. 
\end{tabular}
\end{center}

\begin{figure}[H]
\begin{center}
{\scalebox{0.4}{
\begin{tikzpicture}
\node (N1) at (0,0)
{\scalebox{1}{
\begin{tikzpicture}
\coordinate (Min) at (0,0); \coordinate (Max) at (0,8);
\coordinate (N11) at (-2,2); \coordinate (N12) at (-2,4); \coordinate (N13) at (-2,6); 
\coordinate (N21) at (2,2); \coordinate (N22) at (2,4); \coordinate (N23) at (2,6);
\node[blue] at (0,-0.8) {\huge$\widehat{0}=p_0$} ; \node at (0,8.8) {\huge$\widehat{1}=p_7$} ; 

\draw[line width=0.07cm]  (Min)--(N11); \node at (-1.5,0.8) {\huge$e_1$} ; 
\draw[line width=0.07cm,red]  (N11)--(N12) ; \node[red] at (-2.45,3) {\huge$e_2$} ; 
\draw[line width=0.07cm,red]  (N12)--(N13) ; \node[red] at (-2.45,5) {\huge$e_3$} ; 
\draw[line width=0.07cm,red]  (N13)--(Max) ; \node[red] at (-1.5,7.2) {\huge$e_4$} ; 
\draw[line width=0.07cm,red]  (Min)--(N21) ; \node[red] at (1.5,0.8) {\huge$e_5$} ; 
\draw[line width=0.07cm,red]  (N21)--(N22) ; \node[red] at (2.45,3) {\huge$e_6$} ; 
\draw[line width=0.07cm,red]  (N22)--(N23) ; \node[red] at (2.45,5) {\huge$e_7$} ; 
\draw[line width=0.07cm,red]  (N23)--(Max) ; \node[red] at (1.5,7.2) {\huge$e_8$} ; 
\draw[line width=0.07cm]  (N13)--(N22) ; \node at (0,5.5) {\huge$e_9$} ; 
\draw[line width=0.05cm, dashed, blue]  (-1.6,2)--(-1.6,4)--(-1.6,5.5)--(1.6,3.9)--(1.6,2)--(0,0.4)--(-1.6,2) ;

\node[blue] at (0,3) {\huge$C$} ;
\node[blue] at (-2.65,2) {\huge$p_1$} ; \node[blue] at (-2.65,4) {\huge$p_2$} ; \node[blue] at (-2.65,6) {\huge$p_3$} ; 
\node[blue] at (2.65,2) {\huge$p_4$} ; \node[blue] at (2.65,4) {\huge$p_5$} ; \node at (2.65,6) {\huge$p_6$} ;

\draw [line width=0.07cm, fill=lightgray] (Min) circle [radius=0.2] ; 
\draw [line width=0.07cm, fill=lightgray] (Max) circle [radius=0.2] ; 
\draw [line width=0.07cm, fill=white] (N11) circle [radius=0.2] ; \draw [line width=0.07cm, fill=white] (N12) circle [radius=0.2] ;
\draw [line width=0.07cm, fill=white] (N13) circle [radius=0.2] ; 
\draw [line width=0.07cm, fill=white] (N21) circle [radius=0.2] ; \draw [line width=0.07cm, fill=white] (N22) circle [radius=0.2] ;
\draw [line width=0.07cm, fill=white] (N23) circle [radius=0.2] ; 
\end{tikzpicture} }} ; 

\node (N2) at (8.5,0)
{\scalebox{1}{
\begin{tikzpicture}
\node at (0,-0.8) {\huge$\widehat{0}$} ; \node at (0,8.8) {\huge$\widehat{1}$} ; 

\draw[line width=0.6cm,gray]  (-1.7,5.85)--(N22)--(N21)--(0.2,0.2) ; 
\draw[line width=0.6cm,lightgray]  (-0.2,0.2)--(N11)--(N12)--(-2,5.7) ; 

\draw[line width=0.07cm]  (Min)--(N11) ; \draw[line width=0.07cm]  (N11)--(N12) ; 
\draw[line width=0.07cm]  (N12)--(N13) ; \draw[line width=0.07cm]  (N13)--(Max) ; 
\draw[line width=0.07cm]  (Min)--(N21) ; \draw[line width=0.07cm]  (N21)--(N22) ; 
\draw[line width=0.07cm]  (N22)--(N23) ; \draw[line width=0.07cm]  (N23)--(Max) ; 
\draw[line width=0.07cm]  (N13)--(N22) ; 

\draw [line width=0.07cm, fill=lightgray] (Min) circle [radius=0.2] ; 
\draw [line width=0.07cm, fill=lightgray] (Max) circle [radius=0.2] ; 
\draw [line width=0.07cm, fill=white] (N11) circle [radius=0.2] ; \draw [line width=0.07cm, fill=white] (N12) circle [radius=0.2] ;
\draw [line width=0.07cm, fill=white] (N13) circle [radius=0.2] ; 
\draw [line width=0.07cm, fill=white] (N21) circle [radius=0.2] ; \draw [line width=0.07cm, fill=white] (N22) circle [radius=0.2] ;
\draw [line width=0.07cm, fill=white] (N23) circle [radius=0.2] ; 
\node at (-3,3) {\huge$X_C^+$} ; \node at (3,3) {\huge$X_C^-$} ;
\end{tikzpicture} }} ; 

\node (N3) at (17,0)
{\scalebox{1}{
\begin{tikzpicture}
\node at (0,-0.8) {\huge$\widehat{0}$} ; \node at (0,8.8) {\huge$\widehat{1}$} ; 
\node at (-3,4) {\huge$Y_C^+$} ; \node at (3,2) {\huge$Y_C^-$} ;
\node at (-1.5,0.3) {\huge$Z_C^+$} ; \node at (0,6) {\huge$Z_C^-$} ;

\draw[line width=0.6cm,pink]  (-0.2,0.2)--(-1.8,1.8) ; 
\draw[line width=0.6cm,pink]  (-1.7,5.85)--(1.7,4.15) ; 
\draw[line width=0.6cm,red]  (-2,2.3)--(-2,5.7) ; 
\draw[line width=0.6cm,red]  (2,3.7)--(N21)--(0.2,0.2) ; 

\draw[line width=0.07cm]  (Min)--(N11) ; \draw[line width=0.07cm]  (N11)--(N12) ; 
\draw[line width=0.07cm]  (N12)--(N13) ; \draw[line width=0.07cm]  (N13)--(Max) ; 
\draw[line width=0.07cm]  (Min)--(N21) ; \draw[line width=0.07cm]  (N21)--(N22) ; 
\draw[line width=0.07cm]  (N22)--(N23) ; \draw[line width=0.07cm]  (N23)--(Max) ; 
\draw[line width=0.07cm]  (N13)--(N22) ; 

\draw [line width=0.07cm, fill=lightgray] (Min) circle [radius=0.2] ; 
\draw [line width=0.07cm, fill=lightgray] (Max) circle [radius=0.2] ; 
\draw [line width=0.07cm, fill=white] (N11) circle [radius=0.2] ; \draw [line width=0.07cm, fill=white] (N12) circle [radius=0.2] ;
\draw [line width=0.07cm, fill=white] (N13) circle [radius=0.2] ; 
\draw [line width=0.07cm, fill=white] (N21) circle [radius=0.2] ; \draw [line width=0.07cm, fill=white] (N22) circle [radius=0.2] ;
\draw [line width=0.07cm, fill=white] (N23) circle [radius=0.2] ; 
\end{tikzpicture} }} ; 
\end{tikzpicture}
} }
\end{center}
\caption{Example of $X_C^{\pm}, Y_C^{\pm}$, and $Z_C^{\pm}$}
\label{ex_Xc}
\end{figure}
\end{example}

Let $\calC(P)$ be a convex polytope defined by 
\begin{equation}
\label{ccp}
\calC(P)=\Bigg\{(z_1,\cdots,z_{n-d}) \in \RR^{n-d} \,\Big| 
-|X_C^-|+1 \leq \sum_{e_{d+\ell} \in Z_C^+}  z_\ell -\sum_{e_{d+\ell'} \in Z_C^-}  z_{\ell'}\leq |X_C^+|-1 \Bigg\},
\end{equation}
where $C=(p_{k_1},\cdots,p_{k_m})$ runs over all circuits in $\widehat{P}$. 

Now we are ready to describe the conic divisorial ideals of a Hibi ring. 
\begin{theorem}\label{thm:conic}
Let the notation be the same as above. 
Then, each point $(z_1,\cdots,z_{n-d}) \in \calC(P) \cap \ZZ^{n-d}$ one-to-one corresponds to the conic divisorial ideal of $\kk[P]$. 
\end{theorem}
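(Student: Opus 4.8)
The plan is to translate the abstract cell description of Lemma~\ref{conic_characterization} into explicit linear inequalities by eliminating the real vector $\bfx$, and then to match the outcome with the defining inequalities of $\calC(P)$ in (\ref{ccp}).

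First I would restate conicity concretely: $T(a_1,\dots,a_n)$ is conic exactly when the system $a_e-1<\sigma_e(\bfx)\le a_e$, with $e$ ranging over the edges of $\widehat{P}$, admits a solution $\bfx\in\sfM_\RR=\RR^d$. Fixing the chosen spanning tree $e_1,\dots,e_d$, the forms $\sigma_{e_1},\dots,\sigma_{e_d}$ are linearly independent (a spanning tree determines every vertex value once the value at $\hat{1}$ is normalised to $0$), so $\bfx\mapsto(\sigma_{e_i}(\bfx))_{i\le d}$ is a linear isomorphism of $\RR^d$. Conicity is invariant under $\bfa\mapsto\bfa+\sigma({\bf y})$ (translate $\bfx$ by ${\bf y}$), so using the exact sequence (\ref{cl_seq}) I replace $\bfa$ by the unique representative of its class in $\Cl(\kk[P])\cong\ZZ^{n-d}$ whose tree coordinates $a_{e_1},\dots,a_{e_d}$ all vanish; in this gauge the remaining coordinates are precisely $a_{e_{d+\ell}}=z_\ell$. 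It then remains to characterise the $(z_\ell)\in\ZZ^{n-d}$ for which the tree-reduced system is feasible.

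The engine is Fourier--Motzkin elimination of $\bfx$ (equivalently, a Farkas certificate). A nonnegative combination of the constraints eliminates $\bfx$ precisely when the signed weight vector $c=(c_e)$ lies in the left kernel $\{c:\sum_e c_e\sigma_e=0\}$, which is the cycle space of the Hasse diagram of $\widehat{P}$ and has dimension $n-d$. Pairing the upper bounds with weights $(c_e)^+$ and the lower bounds with $(c_e)^-=\max(-c_e,0)$ produces the scalar inequality $\sum_e c_e a_e>-\sum_e(c_e)^-$. Since the elementary (minimal-support) vectors of the cycle space are exactly the signed circuits, it suffices to run over circuits $C$ and their negatives. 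For a circuit $C$ a direct computation gives $\sum_e(c_e)^-=|X_C^-|$ for one sign and $|X_C^+|$ for the other; here one uses that $C$ meets $\hat{1}$ (and $\hat{0}$) in a balanced way, so that the sign reversals forced on the edges at $\hat{1}$ by the definition of $\sigma_e$ cancel in the count. After tree-reduction the linear part collapses to $\pm\left(\sum_{e_{d+\ell}\in Z_C^+}z_\ell-\sum_{e_{d+\ell'}\in Z_C^-}z_{\ell'}\right)$, and the two inequalities coming from $\pm c$ assemble into exactly the two-sided bound $-|X_C^-|+1\le\sum_{Z_C^+}z_\ell-\sum_{Z_C^-}z_{\ell'}\le|X_C^+|-1$ defining $\calC(P)$.

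Granting the above, necessity of the $\calC(P)$-inequalities is immediate from a solution $\bfx$, while sufficiency follows because any $c$ in the cycle space admits a conformal (sign-compatible) decomposition into signed circuits, so the inequality attached to an arbitrary $c$ is a nonnegative sum of circuit inequalities and is therefore implied by membership in $\calC(P)$; Farkas then returns a solution $\bfx$, hence a conic ideal. The asserted bijection is clear since distinct tree-reduced vectors $(z_\ell)$ name distinct classes of $\Cl(\kk[P])$. The step I expect to be the main obstacle is the sign bookkeeping at the boundary vertices $\hat{0},\hat{1}$: the defining forms $\sigma_e$ do not orient the edges at $\hat{1}$ consistently, so I must verify carefully that $\sum_e(c_e)^-$ really equals $|X_C^\pm|$ and that the surviving linear part is exactly the $Z_C^\pm$-expression, and I must track the strict versus non-strict inequalities so that the integral rounding in $\ulcorner\sigma(\bfx)\urcorner$ produces the $+1$ shifts in (\ref{ccp}) rather than off-by-one errors.
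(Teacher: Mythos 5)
Your proposal reconstructs the skeleton of the paper's own argument---fix the spanning tree, pass to the representative of the class with vanishing tree coordinates (the paper does this via a unimodular transformation onto the semi-open cube $(-1,0]^d$), and characterize feasibility of the system $a_e-1<\sigma_e(\bfx)\le a_e$ by inequalities indexed by cycles of the Hasse diagram---and your Farkas/Fourier--Motzkin treatment of \emph{joint} feasibility is, if anything, more careful than the corresponding passage in the paper. But there is a genuine gap exactly where you match your list of inequalities with the list defining $\calC(P)$ in (\ref{ccp}). The elementary (minimal-support) vectors of the cycle space are the signed \emph{simple cycles} of the Hasse diagram, i.e.\ the paper's ``cycles'', not the paper's ``circuits'': in this paper a circuit is by definition a \emph{chordless} cycle, and a simple cycle with a chord is still an elementary vector (no nonzero cycle-space vector has support strictly inside it, since its edge set is a minimal dependent set), yet it contributes no inequality to (\ref{ccp}). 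Hence what your elimination characterizes is the region cut out by the inequalities of \emph{all} simple cycles, a priori smaller than $\calC(P)$.

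For necessity (nonempty cell $\Rightarrow$ point of $\calC(P)$) this costs nothing, but your sufficiency argument breaks: you assert that the inequality attached to an arbitrary kernel vector is a nonnegative \emph{conformal} sum of inequalities indexing $\calC(P)$, and this is false for a chorded cycle $C$. The decomposition $C=C_1+C_2$ across a chord is not conformal (the chord carries opposite signs in $C_1$ and $C_2$), and, being elementary, $C$ admits no nontrivial conformal decomposition at all; so membership in $\calC(P)$ does not, by your mechanism, yield the inequality for $C$. This is precisely what the fourth step of the paper's proof supplies, by a non-conformal but valid derivation: sum the two circuit inequalities, observe that the chord's $z$-terms cancel, and use $|X_C^{\pm}|=|X_{C_1}^{\pm}|+|X_{C_2}^{\pm}|-1$, so that the two integral ``$+1$'' shifts absorb exactly the slack created by the sign cancellation on the chord. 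Your proof needs this step (or an induction on the number of chords) inserted before Farkas can be applied; with it, the argument closes. As a side remark, the sign anomaly at $\hat{1}$ that you flag as your main obstacle is in fact a sign typo in the paper's definition of $\sigma_e$: the identity $\sum_{f\in X_C^+}\sigma_f=\sum_{f'\in X_C^-}\sigma_{f'}$, which the paper itself uses, forces the form attached to an edge $\{p_i,\hat{1}\}$ to be $x_i$ rather than $-x_i$; with the uniform convention $\sigma_e=x_{\mathrm{lower}}-x_{\mathrm{upper}}$ and $x_{\hat{1}}=0$, your identification of the left kernel with the cycle space, and the counts $\sum_e(c_e)^-=|X_C^{\mp}|$, come out cleanly.
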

\begin{remark}
The expression of a divisorial ideal as the element of $\Cl(\kk[P])$ depends on a spanning tree. Thus, if we choose another spanning tree, the shape of $\calC(P)$ might be different. 
However, we have a bijection between such different expression via the relations (\ref{relation_divisor}). Thus, it does not matter how we choose a spanning tree. 
\end{remark}
\begin{proof}[Proof of Theorem \ref{thm:conic}]

Let us fix a spanning tree $e_1,\cdots,e_d$ of $\widehat{P}$. 
We sometimes denote the linear form $\sigma_{e_i}$ by $\sigma_i$ for simplicity. 
We see that there exists a unimodular transformation $\phi:\RR^d \rightarrow \RR^d$ 
such that $$\phi((-1,0]^d) = \{{\bf x} \in \RR^d \mid -1 < \sigma_i({\bf x}) \leq 0, i=1,\cdots,d\}.$$ 
In fact, when we regard each $\sigma_i$ as a vector of $\ZZ^d$, 
there is at most one $1$ and at most one $-1$ in the entry of each $\sigma_i$ and other entries are all $0$. 
Then the matrix whose row vectors are $\sigma_1,\cdots,\sigma_d$ becomes a totally unimodular matrix. 
Moreover, since $e_1,\cdots,e_d$ form a spanning tree of $\widehat{P}$, we conclude that 
the matrix has the determinant $\pm 1$, i.e., is a unimodular matrix. We may define $\phi$ by this matrix. 
Since this $\phi$ does not change a given Hibi ring up to isomorphism, we will work with the linear form $\sigma$ 
satisfying $-1 < \sigma_i({\bf x}) \leq 0$ for all $i=1,\cdots,d$. 

\medskip

Using the one-to-one correspondence mentioned in Lemma~\ref{conic_characterization}, 
we first show that $(m_1,\cdots,m_n) \in \ZZ^n$ that represents a conic divisorial ideal $T(m_1,\cdots, m_n)$ is contained in 
$\calC(P)\cap\ZZ^{n-d}$ as the element of $\Cl(R)$. 
To show this, we consider the decomposition of the semi-open cube $(-1,0]^d$ by the hyperplanes 
$H_{i,m} = \{{\bf x} \in \RR^d \mid \sigma_i({\bf x}) = m\}$ for $1 \leq i \leq n$ with $m \in \ZZ$. 
Let $L_{i,m}=\{{\bf x} \in \RR^d \mid m-1<\sigma_i({\bf x}) \leq m \}$. 
Since each full-dimensional cell of the decomposition is of the form $\bigcap_{i=1}^n L_{i,m_i}$, 
where $m_i \in \ZZ$ for each $1 \leq i \leq n$, 
we identify each full-dimensional cell $\bigcap_{i=1}^n L_{i,m_i}$ of the decomposition with $(m_1,\cdots,m_n) \in \ZZ^n$.

\medskip

\noindent({\bf The first step}): 
Let $\Pi=\{{\bf x} \in \RR^d \mid -1 < \sigma_i({\bf x}) \leq 0, i=1,\cdots,d\}$. 
Notice that the intersection of $\Pi$ and $H_{i,m}$ for $1 \leq i \leq d$ is just the boundary of $\Pi$ if they intersect. 
Hence, in what follows, we may discuss the decomposition of $\Pi$ by the hyperplanes $H_{i,m}$ for $d+1 \leq i \leq n$ with $m \in \ZZ$. 
In other words, we only consider the cells $(m_1,\cdots,m_n) \in \ZZ^n$ 
which are of the form $(\underbrace{0,\cdots,0}_d,m_{d+1},\cdots,m_n) \in \ZZ^n$. 

\noindent({\bf The second step}): 
We determine the possible $(m_{d+1},\cdots,m_n) \in \ZZ^{n-d}$ such that $\Pi \cap \bigcap_{i=d+1}^n L_{i,m_i}$ defines a full-dimensional cell. 

Fix $e=e_i \in \{e_{d+1},\cdots,e_n\}$. Since $e_1,\cdots,e_d$ form a spanning tree of $\widehat{P}$, 
there should exist a unique cycle $C=(p_{k_1},\cdots,p_{k_m})$ in $\widehat{P}$ 
with $e=\{p_{k_j},p_{k_{j+1}}\}$ for some $1 \leq j \leq m$ and $\{p_{k_\ell},p_{k_{\ell+1}}\} \in \{e_1,\cdots,e_d\}$ 
for every $1 \leq \ell \leq m$ with $\ell \neq j$, where $p_{k_{m+1}}=p_{k_1}$. Namely, $Z_C^+ \cup Z_C^- = \{\{p_{k_j},p_{k_{j+1}}\}\}$. 
Without loss of generality, we assume $e \in Z_C^+$. 
Now, we can easily see that $\sum_{f \in X_C^+}\sigma_f = \sum_{f' \in X_C^-} \sigma_{f'}$ by the definition of $\sigma_f,\sigma_{f'}$ and $X_C^\pm$. 
Thus, we obtain $\sigma_e=\sum_{f' \in Y_C^-}\sigma_{f'} - \sum_{f \in Y_C^+} \sigma_f$. 
Recall that for each ${\bf x} \in \Pi$, one has $-1<\sigma_i({\bf x}) \leq 0$ for $1 \leq i \leq d$. 
Thus, we obtain that $-|Y_C^-| < \sigma_e({\bf x}) < |Y_C^+|$. Hence, there exists ${\bf x} \in \Pi$ with $m-1<\sigma_e({\bf x}) \leq m$ 
(i.e., ${\bf x} \in \Pi \cap L_{i,m}$) if and only if $-|Y_C^-| +1 \leq m \leq |Y_C^+|$. 
This means that each cell corresponding to $(0,\cdots,0,m_{d+1},\cdots,m_n) \in \ZZ^n$ satisfies 
$-|Y_C^-|+1=-|X_C^-|+1 \leq m_i \leq |X_C^+|-1=|Y_C^+|$. 

\medskip

\noindent({\bf The third step}): 
We then consider a cycle $C=(p_{k_1},\cdots,p_{k_m})$ in $\widehat{P}$ with $|Z_C^+ \cup Z_C^-| \geq 2$, 
and hence a cycle $C$ is different from the one in the second step. 
Since $\sum_{f \in Y_C^+ \cup Z_C^+}\sigma_f({\bf x}) = \sum_{f' \in Y_C^- \cup Z_C^-}\sigma_{f'}({\bf x})$, 
one has $\sum_{f \in Z_C^+}\sigma_f({\bf x}) - \sum_{f' \in Z_C^-}\sigma_{f'}({\bf x})
=\sum_{f' \in Y_C^-}\sigma_{f'}({\bf x})-\sum_{f \in Y_C^+}\sigma_f({\bf x})$. 
Similar to the discussion in the second step, we have 
$$-|Y_C^-| < \sum_{f \in Z_C^+}\sigma_f({\bf x}) - \sum_{f' \in Z_C^-}\sigma_{f'}({\bf x}) < |Y_C^+|.$$
On the other hand, it follows from the second step that we have 
\begin{equation*}
\sum_{e_\ell \in Z_C^+}m_\ell - \sum_{e_{\ell'} \in Z_C^-}m_{\ell'}-|Z_C^+|< 
\sum_{f \in Z_C^+}\sigma_f({\bf x}) - \sum_{f' \in Z_C^-}\sigma_{f'}({\bf x}) 
 <\sum_{e_\ell \in Z_C^+}m_\ell - \sum_{e_{\ell'} \in Z_C^-}m_{\ell'}+|Z_C^-|.
\end{equation*}
Hence, by these two inequalites, we obtain that 
$$-|Y_C^-|-|Z_C^-|<\sum_{e_\ell \in Z_C^+}m_\ell - \sum_{e_{\ell'} \in Z_C^-}m_{\ell'}<|Y_C^+|+|Z_C^+|,$$
which is equivalent to 
$$-|X_C^-|+1 \leq \sum_{e_\ell \in Z_C^+}m_\ell - \sum_{e_{\ell'} \in Z_C^-}m_{\ell'}\leq |X_C^+|-1.$$ 
This means that each cell $(0,\cdots,0,m_{d+1},\cdots,m_n) \in \ZZ^n$ satisfies this inequality. 

\medskip

\noindent({\bf The fourth step}): 
We will prove that each inequality for a cycle that is not a circuit always comes from the ones for circuits.
This means that we may only consider the inequalites obtained in the second and third steps for circuits. 
Let $C=(p_{k_1},\ldots,p_{k_m})$ be a cycle in $\widehat{P}$ that is not a circuit. 
Thus, there exists an edge $\{p_{k_a},p_{k_b}\}$ of $\widehat{P}$ such that $1 \leq a < b \leq m$ with $b-a > 1$ and $(a,b) \neq (1,m)$. 
Assume $p_{k_a} \prec p_{k_b}$. Let $C_1=(p_{k_1},\ldots,p_{k_a},p_{k_b},\ldots,p_{k_m})$ and $C_2=(p_{k_a},p_{k_{a+1}},\ldots,p_{k_b})$. 
Then $C_1$ and $C_2$ are cycles and we see that $\{p_{k_a},p_{k_b}\} \in X_{C_1}^+ \cap X_{C_2}^-$, 
$X_C^+= X_{C_1}^+ \cup X_{C_2}^+ \setminus \{\{p_{k_a},p_{k_b}\}\}$ and $X_C^-= X_{C_1}^- \cup X_{C_2}^- \setminus \{\{p_{k_a},p_{k_b}\}\}$. 
Thus, $|X_C^+|=|X_{C_1}^+|+|X_{C_2}^+|-1$ and $|X_C^-|=|X_{C_1}^-|+|X_{C_2}^-|-1$. Hence, 
by summing up two inequalities 
\begin{align*}
&-|X_{C_1}^-|+1 \leq \sum_{e_{d+\ell} \in Z_{C_1}^+}  z_\ell - \sum_{e_{d+\ell'} \in Z_{C_1}^-}  z_{\ell'} \leq |X_{C_1}^+|-1 \text{ and } \\
&-|X_{C_2}^-|+1 \leq \sum_{e_{d+\ell} \in Z_{C_2}^+}  z_\ell - \sum_{e_{d+\ell'} \in Z_{C_2}^-}  z_{\ell'} \leq |X_{C_2}^+|-1, 
\end{align*}
we obtain the inequality for $C$. 

\medskip

\noindent({\bf The fifth step}): 
Therefore, from the second and the third steps together with the fourth step, 
we see that if each $(0,\cdots,0,m_{d+1},\cdots,m_n) \in \ZZ^n$ defines a full-dimensional cell, 
then $(z_1,\cdots,z_{n-d})$ satisfies the conditions described in \eqref{ccp} as required, 
by identifying each $(0,\cdots,0,m_{d+1},\cdots,m_n) \in \ZZ^n$ with $(z_1,\cdots,z_{n-d}) \in \ZZ^{n-d}$ by $m_{d+j} = z_j$. 

\medskip

Next, we show the converse, that is, we show that $T(m_1,\cdots, m_n)$ contained in 
$\calC(P)\cap\ZZ^{n-d}$ as the element of $\Cl(R)$ actually determines a conic one. 
In order to achieve this, we identify $T(m_1,\cdots, m_n)$ with the divisor $-\sum^n_{i=1}m_i\calD_i$, 
and show that $-m_i\in(-1,0]$ for any $i=1,\cdots, n$ when we consider $-\sum m_i\calD_i$ as the element of $\Cl(R)_\RR$ (see Lemma~\ref{conic_characterization}). 
Let $\sum^n_{i=1}\overline{m}_i\calD_i$ be the divisor in $\Cl(R)_\RR$ corresponding to $-\sum^n_{i=1}m_i\calD_i$. 
Thus, $m_i+\overline{m}_i=\sigma_i(\bfx)$ holds for any $i=1,\cdots, n$ and some $\bfx\in\RR^d$. 
Using the unimodular transformation $\phi$ discussed in the beginning of this proof, we may assume that 
$\sigma_i(\bfx)=x_i$ for $i=1,\cdots,d$ where $\bfx=(x_1,\cdots,x_d)$. 
Thus, if we take $\bfx\in\RR^d$ such that $m_i=\ulcorner x_i \urcorner$, then $x_i-m_i=\overline{m}_i\in(-1,0]$ holds for all $i=1,\cdots,d$. 
Since conic divisorial ideals are determined by elements in $\sfM_\RR/\sfM$, we may assume $x_i=\overline{m}_i$. 
In the following, we use this $\bfx\in(-1,0]^d$, especially $-1<\sigma_i(\bfx)\le 0$ holds for all $i=1,\cdots,d$.

For any $e_j\in\{ e_{d+1},\cdots, e_n\}$, by the same argument as in the second step, we see that 
$m_j-1<\sigma_{e_j}({\bf x}) \leq m_j$ if and only if 
\begin{equation}
\label{ineq_C}
-|X_{C_j}^-| +1 \leq m_j \leq |X_{C_j}^+|-1
\end{equation}
 for a certain cycle $C_j$. 
Since we are considering $m_j$'s satisfying the condition (\ref{ccp}) for all circuits, the condition (\ref{ineq_C}) holds. 
(Recall that the inequality arising from a cycle $C$ is obtained by combining the ones arising from circuits as we noted in the fourth step.) 
Thus, we have that $m_j-1<\sigma_{e_j}({\bf x}) \leq m_j$, and hence $\overline{m}_j\in(-1,0]$ holds. 
\end{proof}

For the convenience, we describe the full-dimensional cell corresponding to $(m_1,\cdots,m_{n-d}) \in \calC(P) \cap \ZZ^{n-d}$. 
We set the region consisting of those $(y_1,\cdots,y_d) \in \RR^d$ satisfying the following inequalities: 
\begin{itemize}
\setlength{\parskip}{0pt} 
\setlength{\itemsep}{3pt}
\item $-1< y_i \leq 0$ for $1 \leq i \leq d$; 
\item $\displaystyle m_j-1 < \sum_{e_{\ell'} \in Y_C^-} y_{\ell'}-\sum_{e_\ell \in Y_C^+} y_\ell \leq m_j$ 
for $1 \leq j \leq n-d$, where $C=(p_{k_1},\cdots,p_{k_m})$ is a unique cycle in $\widehat{P}$ 
with $\{p_{k_m},p_{k_1}\}=e_{d+j}$, $p_{k_m} \prec p_{k_1}$ and $Z_C^+ \cup Z_C^- = \{e_{d+j}\}$. 
\end{itemize}
Then by the final part of the proof of Theorem \ref{thm:conic}, this is the full-dimensional cell corresponding to $(m_1,\cdots,m_{n-d})$. 

\begin{example}\label{segre}
Let $t \geq 2$. Given positive integers $r_1,\cdots,r_t$, 
we consider a disjoint union of chains, i.e., a poset $P=\{p_{i,j} \mid 1 \leq i \leq t, 1 \leq j \leq r_i\}$ 
equipped with the partial order $p_{i,1} \prec \cdots \prec p_{i,r_i}$ for each $1 \leq i \leq t$  (see Figure~\ref{segre_poset}). 
Let $e_{i,j}=\{p_{i,j-1},p_{i,j}\}$ be the edge of $\widehat{P}$ for $1 \leq i \leq t$ and $1 \leq j \leq r_i+1$, 
where $p_{i,0}$ (resp. $p_{i,r_i+1}$) denotes $\hat{0}$ (resp. $\hat{1}$) for every $i$. 
Then, we see that the Hibi ring associated with $P$ is isomorphic to the Segre product of polynomial rings $\#_{i=1}^t \kk[x_{i,1},\cdots,x_{i,r_i+1}]$, 
which is the ring generated by monomials of the form $x_{1,j_1}x_{2,j_2}\cdots x_{t,j_t}$ where $j_i=1,\cdots,r_i+1$ and $i=1,\cdots,t$. 
The set of the edges 
$$\{e_{i,j} \mid 1 \leq i \leq t, 2 \leq j \leq r_i+1\} \cup \{e_{t,1}\}$$ 
described by red line in Figure~\ref{segre_poset}
is a spanning tree of $\widehat{P}$. Fix this spanning tree. 
In this case, the class group is isomorphic to $\ZZ^{t-1}$ which is generated by divisors $\calD_{e_{1,1}},\cdots,\calD_{e_{t-1,1}}$. 

\begin{figure}[H]
\begin{center}
{\scalebox{0.55}{
\begin{tikzpicture}

\coordinate (Min) at (4.5,-2.5); \coordinate (Max) at (4.5,9.5);
\coordinate (N11) at (0,0); \coordinate (N12) at (0,1.5); \coordinate (N13) at (0,3); \coordinate (N14) at (0,5.5); \coordinate (N15) at (0,7);
\coordinate (N21) at (3,0); \coordinate (N22) at (3,1.5); \coordinate (N23) at (3,3); \coordinate (N24) at (3,5.5); \coordinate (N25) at (3,7);
\coordinate (Ntt1) at (6,0); \coordinate (Ntt2) at (6,1.5); \coordinate (Ntt3) at (6,3); \coordinate (Ntt4) at (6,5.5); \coordinate (Ntt5) at (6,7);
\coordinate (Nt1) at (9,0); \coordinate (Nt2) at (9,1.5); \coordinate (Nt3) at (9,3); \coordinate (Nt4) at (9,5.5); \coordinate (Nt5) at (9,7);

\draw[line width=0.07cm]  (N11)--(Min) node[midway,xshift=0cm,yshift=-0.4cm] {\Large $e_{1,1}$};
\draw[line width=0.07cm,red]  (N11)--(N12) node[midway,xshift=-0.5cm] {\Large $e_{1,2}$}; 
\draw[line width=0.07cm,red]  (N12)--(N13) node[midway,xshift=-0.5cm] {\Large $e_{1,3}$}; 
\draw[line width=0.07cm,red]  (N13)--(0,3.5) ; \draw[line width=0.07cm,red]  (0,5)--(N14); \draw[line width=0.07cm, loosely dotted,red]  (0,3.8)--(0,4.7);
\draw[line width=0.07cm,red]  (N14)--(N15) node[midway,xshift=-0.5cm] {\Large $e_{1,r_1}$}; 
\draw[line width=0.07cm,red]  (N15)--(Max) node[midway,xshift=-0.3cm,yshift=0.4cm] {\Large $e_{1,r_1+1}$};

\draw[line width=0.07cm]  (N21)--(Min) node[midway,xshift=-0.9cm,yshift=0.6cm] {\Large $e_{2,1}$};
\draw[line width=0.07cm,red]  (N21)--(N22) node[midway,xshift=-0.5cm] {\Large $e_{2,2}$}; 
\draw[line width=0.07cm,red]  (N22)--(N23) node[midway,xshift=-0.5cm] {\Large $e_{2,3}$}; 
\draw[line width=0.07cm,red]  (N23)--(3,3.5); \draw[line width=0.07cm,red]  (3,5)--(N24); \draw[line width=0.07cm, loosely dotted,red]  (3,3.8)--(3,4.7);
\draw[line width=0.07cm,red]  (N24)--(N25) node[midway,xshift=-0.5cm] {\Large $e_{2,r_2}$}; 
\draw[line width=0.07cm,red]  (N25)--(Max) node[midway,xshift=-1.1cm,yshift=-0.6cm] {\Large $e_{2,r_2+1}$}; 

\draw[line width=0.07cm]  (Ntt1)--(Min) node[midway,xshift=1.1cm,yshift=0.6cm] {\Large $e_{t-1,1}$};
\draw[line width=0.07cm,red]  (Ntt1)--(Ntt2) node[midway,xshift=0.65cm] {\Large $e_{t-1,2}$}; 
\draw[line width=0.07cm,red]  (Ntt2)--(Ntt3) node[midway,xshift=0.65cm] {\Large $e_{t-1,3}$}; 
\draw[line width=0.07cm,red]  (Ntt3)--(6,3.5); \draw[line width=0.07cm,red]  (6,5)--(Ntt4); \draw[line width=0.07cm, loosely dotted,red]  (6,3.8)--(6,4.7);
\draw[line width=0.07cm,red]  (Ntt4)--(Ntt5) node[midway,xshift=0.88cm] {\Large $e_{t-1,r_{t-1}}$}; 
\draw[line width=0.07cm,red]  (Ntt5)--(Max) node[midway,xshift=1.3cm,yshift=-0.6cm] {\Large $e_{t-1,r_{t-1}+1}$};

\draw[line width=0.07cm,red]  (Nt1)--(Min) node[midway,xshift=0cm,yshift=-0.4cm] {\Large $e_{t,1}$}; 
\draw[line width=0.07cm,red]  (Nt1)--(Nt2) node[midway,xshift=0.45cm] {\Large $e_{t,2}$}; 
\draw[line width=0.07cm,red]  (Nt2)--(Nt3) node[midway,xshift=0.45cm] {\Large $e_{t,3}$}; 
\draw[line width=0.07cm,red]  (Nt3)--(9,3.5); \draw[line width=0.07cm,red]  (9,5)--(Nt4); \draw[line width=0.07cm, loosely dotted,red]  (9,3.8)--(9,4.7);
\draw[line width=0.07cm,red]  (Nt4)--(Nt5) node[midway,xshift=0.5cm] {\Large $e_{t,r_t}$}; 
\draw[line width=0.07cm,red]  (Nt5)--(Max) node[midway,xshift=0.3cm,yshift=0.4cm] {\Large $e_{t,r_t+1}$}; 

\draw[line width=0.07cm, loosely dotted,red]  (4,4.25)--(5,4.25); 

\draw [line width=0.07cm, fill=lightgray] (Min) circle [radius=0.18] ; 
\draw [line width=0.07cm, fill=lightgray] (Max) circle [radius=0.18] ; 
\draw [line width=0.07cm, fill=white] (N11) circle [radius=0.18] ; \draw [line width=0.07cm, fill=white] (N12) circle [radius=0.18] ;
\draw [line width=0.07cm, fill=white] (N13) circle [radius=0.18] ; \draw [line width=0.07cm, fill=white] (N14) circle [radius=0.18] ;
\draw [line width=0.07cm, fill=white] (N15) circle [radius=0.18] ;
\draw [line width=0.07cm, fill=white] (N21) circle [radius=0.18] ; \draw [line width=0.07cm, fill=white] (N22) circle [radius=0.18] ;
\draw [line width=0.07cm, fill=white] (N23) circle [radius=0.18] ; \draw [line width=0.07cm, fill=white] (N24) circle [radius=0.18] ;
\draw [line width=0.07cm, fill=white] (N25) circle [radius=0.18] ;
\draw [line width=0.07cm, fill=white] (Nt1) circle [radius=0.18] ; \draw [line width=0.07cm, fill=white] (Nt2) circle [radius=0.18] ;
\draw [line width=0.07cm, fill=white] (Nt3) circle [radius=0.18] ; \draw [line width=0.07cm, fill=white] (Nt4) circle [radius=0.18] ;
\draw [line width=0.07cm, fill=white] (Nt5) circle [radius=0.18] ;
\draw [line width=0.07cm, fill=white] (Ntt1) circle [radius=0.18] ; \draw [line width=0.07cm, fill=white] (Ntt2) circle [radius=0.18] ;
\draw [line width=0.07cm, fill=white] (Ntt3) circle [radius=0.18] ; \draw [line width=0.07cm, fill=white] (Ntt4) circle [radius=0.18] ;
\draw [line width=0.07cm, fill=white] (Ntt5) circle [radius=0.18] ;

\node at (4.5,-3.1) {\Large$\widehat{0}$}; \node at (4.5,10.1) {\Large$\widehat{1}$}; 
\node at (0.7,0) {\Large $p_{1,1}$}; \node at (0.7,1.5) {\Large $p_{1,2}$}; \node at (0.7,3) {\Large $p_{1,3}$}; 
\node at (0.8,7) {\Large $p_{1,r_1}$}; 
\node at (3.7,0) {\Large $p_{2,1}$}; \node at (3.7,1.5) {\Large $p_{2,2}$}; \node at (3.7,3) {\Large $p_{2,3}$}; 
\node at (3.8,7) {\Large $p_{2,r_2}$}; 
\node at (5.2,0) {\Large $p_{t-1,1}$}; \node at (5.2,1.5) {\Large $p_{t-1,2}$}; \node at (5.2,3) {\Large $p_{t-1,3}$}; 
\node at (5.1,7) {\Large $p_{t-1,r_{t-1}}$}; 

\node at (8.4,0) {\Large $p_{t,1}$}; \node at (8.4,1.5) {\Large $p_{t,2}$}; \node at (8.4,3) {\Large $p_{t,3}$}; 
\node at (8.35,7) {\Large $p_{t,r_t}$}; 

\end{tikzpicture}
} }
\end{center}
\caption{The Hasse diagram of the poset giving the Segre product of polynomial rings, and a spanning tree (red edges)}
\label{segre_poset}
\end{figure}

Then, we can see that 
\begin{align}\label{segre_ineq}
\calC(P)=\{(z_1,\cdots,z_{t-1}) \in \RR^{t-1} \mid &-r_t \leq z_i \leq r_i \text{ for }1 \leq i \leq t-1, \\
& -r_j \leq z_i-z_j \leq r_i \text{ for }1 \leq i < j \leq t-1\}. \nonumber
\end{align}
In fact, since every cycle of $\widehat{P}$ is of the form 
$$C_{k,\ell}=(p_{k,1},p_{k,2},\cdots,p_{k,r_k+1},p_{\ell,r_\ell},\cdots,p_{\ell,1},p_{\ell,0})$$ 
where $1 \leq k < \ell \leq t$, one has 
\begin{align*}
&Z_{C_{k,\ell}}^+=\{e_{k,1}\}, \quad Z_{C_{k,\ell}}^-=\begin{cases}\{e_{\ell,1}\}, \;\; &\text{if }\ell \leq t-1, \\ \emptyset &\text{if }\ell=t \end{cases} 
\quad\text{ and }\\
&|X_{C_{k,\ell}}^+|=r_k+1 \text{ and }|X_{C_{k,\ell}}^-|=r_\ell+1\text{ for }1 \leq k < \ell \leq t. 
\end{align*}
Hence, we obtain the inequalites given in \eqref{segre_ineq}. 

Moreover, from the above description, the full-dimensional cell in $\RR^{\sum_{i=1}^tr_i+1}$ corresponding to each point $(m_1,\cdots,m_{t-1}) \in \calC(P) \cap \ZZ^{t-1}$ is given by 
\begin{align}\label{segre_cell}
\bigg\{(y',y_{i,j})_{\small\substack{1 \leq i \leq t \\ 1 \leq j \leq r_i}} \in \RR^{\sum_{i=1}^tr_i+1} \mid & -1 < y' \leq 0, \; -1 < y_{i,j} \leq 0, \\
&m_i-1 < y'+\sum_{j=1}^{r_t}y_{t,j}-\sum_{j'=1}^{r_i}y_{i,j'} \leq m_i \text{ for }1 \leq i \leq t-1\bigg\}.\nonumber\end{align}
\end{example}

\section{Non-commutative crepant resolutions of Segre products of polynomial rings}
\label{sec_NCCR}

In this section, we assume that $\kk$ is an algebraically closed field of characteristic zero. 
First, we recall the definition of non-commutative (crepant) resolutions \cite{VdB3,DITV}. 

\begin{definition}
\label{def_NCCR}
Let $A$ be a CM normal domain, and $M$ be a non-zero reflexive $A$-module. Let $E\coloneqq\End_A(M)$. 
\begin{enumerate}[(1)]
\setlength{\parskip}{0pt} 
\setlength{\itemsep}{3pt}
\item We say that $E$ is a \emph{non-commutative resolution} (= \emph{NCR}) of $A$ or $M$ \emph{gives an NCR} of $A$ if $\gldim E<\infty$. 
\item We say that $E$ is a \emph{non-commutative crepant resolution} (= \emph{NCCR}) of $A$ 
or $M$ \emph{gives an NCCR} of $A$ if $\gldim E_\mathfrak{p}=\dim A_\mathfrak{p}$ for all $\mathfrak{p}\in\Spec A$ and $E$ is an MCM $A$-module.
\end{enumerate}
In addition, we say that an NC(C)R $E=\End_A(M)$ is \emph{splitting} if $M$ is a finite direct sum of rank one reflexive $A$-modules after \cite{IN}. 
\end{definition}

\begin{remark}
\label{rem_NCCR}
\begin{enumerate}[(1)]
\setlength{\parskip}{0pt} 
\setlength{\itemsep}{3pt}
\item When $A$ is a Gorenstein normal domain, we can relax Definition~\ref{def_NCCR}(2). 
That is, $E$ is an NCCR of $A$  if and only if $\gldim E<\infty$ and $E$ is an MCM $A$-module (see \cite[Lemma~2.23]{IW1}). 
\item A splitting NCCR is also called ``toric" NCCR when $A$ is a toric ring (see \cite{Boc}).
\end{enumerate}
\end{remark}
 
As we saw in Theorem~\ref{motivation_thm}(2), any toric ring admits an NCR, 
which is given by conic divisorial ideals. 
Thus, we immediately have the following. 

\begin{corollary}
Let $R$ be a Hibi ring. Let $M_\calC$ be a direct sum of all conic divisorial ideals, that is, $M_\calC$ is a direct sum of divisorial ideals 
satisfying the condition given in Theorem~\ref{thm:conic}. 
Then,  $\End_R(M_\calC)$ is an NCR of $R$. 
\end{corollary}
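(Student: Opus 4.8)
The plan is to deduce this corollary directly from the results already assembled in the excerpt, since almost all the work has been done. The statement asserts that for a Hibi ring $R = \kk[P]$, the module $M_\calC$ which is the direct sum of all conic divisorial ideals gives an NCR, i.e. $\gldim \End_R(M_\calC) < \infty$. The key observation is that $M_\calC$ is, up to multiplicities, the same module as $R^{1/m}$ for $m \gg 0$. I would first recall from Theorem~\ref{motivation_thm}(1) that the conic divisorial ideals are \emph{precisely} the rank one reflexive modules appearing as direct summands of $R^{1/m}$ for $m \gg 0$. Combined with the fact that $\calC(R)$ is a finite set (noted after Lemma~\ref{conic_characterization}, and realized concretely for Hibi rings via Theorem~\ref{thm:conic}, since $\calC(P) \cap \ZZ^{n-d}$ is finite), this means there is a single uniform $m$ for which $R^{1/m}$ has among its indecomposable summands exactly one copy of each isomorphism class of conic divisorial ideal.

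The central step is therefore to show $\add_R M_\calC = \add_R R^{1/m}$ for such an $m$. Since every indecomposable summand of $R^{1/m}$ is conic by Theorem~\ref{motivation_thm}(1), and every conic class actually occurs in $R^{1/m}$ for $m$ large enough, we get that $R^{1/m}$ and $M_\calC$ have precisely the same indecomposable summands up to isomorphism. Hence $\add_R R^{1/m} = \add_R M_\calC$. I would then invoke the standard fact that the finiteness of global dimension of $\End_R(N)$ depends only on $\add_R N$: if $N$ and $N'$ satisfy $\add_R N = \add_R N'$, then $\End_R(N)$ and $\End_R(N')$ are Morita equivalent, and in particular $\gldim \End_R(N) = \gldim \End_R(N')$. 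Applying Theorem~\ref{motivation_thm}(2), which gives $\gldim \End_R(R^{1/m}) < \infty$ for $m \gg 0$, we conclude $\gldim \End_R(M_\calC) < \infty$, so $\End_R(M_\calC)$ is an NCR of $R$ by Definition~\ref{def_NCCR}(1).

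I do not expect a serious obstacle here, as this corollary is essentially a bookkeeping consequence of the two parts of Theorem~\ref{motivation_thm} together with Theorem~\ref{thm:conic}. The only point requiring a little care is the passage from ``each conic class appears in $R^{1/m}$ for $m$ possibly depending on the class'' to ``all conic classes appear simultaneously for a single uniform $m$.'' This is handled by the finiteness of $\calC(R)$: taking $m$ larger than the finitely many thresholds (or, more robustly, using that the set of summands of $R^{1/m}$ stabilizes to all of $\calC(R)$ for $m \gg 0$) secures a uniform choice. With that, the Morita-equivalence reduction finishes the argument without further computation.
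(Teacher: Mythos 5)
Your proof is correct and follows essentially the same route as the paper, which deduces the corollary immediately from Theorem~\ref{motivation_thm}: conic divisorial ideals are exactly the summands of $R^{1/m}$ for $m\gg 0$, and $\End_R(R^{1/m})$ has finite global dimension. Your additional details (a uniform $m$ via finiteness of $\calC(R)$, and Morita invariance of $\gldim$ under $\add_R M_\calC=\add_R R^{1/m}$) are exactly the bookkeeping the paper leaves implicit.
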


We note that $M_\calC$ rarely gives an NCCR, because it is too big to allow $\End_R(M_\calC)$ to be an MCM $R$-module. 
Thus, we will consider to leave out some summands in $M_\calC$, and construct an NCCR. 
In fact, an NCCR of a toric ring given via a ``quasi-symmetric" representation (we will define this later) is constructed using a part of conic divisorial ideals (see \cite[Theorem~1.19]{SpVdB}). 
Our result in the previous section enables us to construct NCCRs for some Hibi rings, even if those do not satisfy the quasi-symmetric condition. 

Let $R$ be a Hibi ring whose class group is $\Cl(R)\cong \ZZ^{n-d}$. 
In what follows, we construct NCCRs of some Hibi rings using methods in \cite{SpVdB} heavily. 
To clarify the difference between our settings and those in \cite{SpVdB}, we rewrite $R$ as the ring of invariants 
under the action of $G\coloneqq\Hom(\Cl(R),\kk^\times)\cong(\kk^\times)^{n-d}$ on $S$, 
where $S\coloneqq \kk[x_1,\cdots,x_n]$ that gives the embedding $R\hookrightarrow S$ via $\sigma$ in (\ref{cl_seq}).
Let $\rmX(G)$ be the character group of $G$. In particular, we see that $\Cl(R)\cong\rmX(G)$. 
(Using this identification, we will use the same symbol for both of a character and the corresponding weight.) 
When we consider the prime divisor $\calD_i$ on $\Spec R$ as the element in $\rmX(G)$ via the surjection in (\ref{cl_seq}), we denote it by $\beta_i$. 
Let $V_\chi$ be the irreducible representation corresponding to a character $\chi\in\rmX(G)$, and let $W=\oplus_iV_{\beta_i}$ 
(we sometimes identify $\chi$ with $V_\chi$). 
Then, the symmetric algebra $S(W)$ of the $G$-representation $W$ is isomorphic to $S$, and this induces the action of the algebraic torus $G$ on $S$. 
That is, $g\in G$ acts on $x_i$ as $g\cdot x_i=\beta_i(g)x_i$. 
This action is generic (see \cite[Definition~1.6]{SpVdB}), and gives the $\Cl(R)$-grading on $S$, and the degree zero part 
coincides with the $G$-invariant components. In particular, we have that $R=S^G$ (see e.g., \cite[Theorem~2.1]{BG1}). 
Also, we say that $W$ is \emph{quasi-symmetric} if for every line $\ell\subset\rmX(G)_\RR$ passing through the origin, 
we have $\sum_{\beta_i\in\ell}\beta_i=0$. If $W$ is quasi-symmetric, then $R=S^G$ is Gorenstein. 

We then rewrite conic divisorial ideals in terms of modules of covariants (cf. \cite[Section~10.6]{SpVdB}). 
For a character $\chi\in\rmX(G)$, we call an $R$-module with the form $M_{\chi}\coloneqq(S\otimes_\kk V_{\chi})^G$ \emph{module of covariants}, 
which is generated by $f\in S$ with $g\cdot f=\chi(g)f$ for any $g\in G$. 
In particular, for $\chi=\sum_ia_i\calD_i\in\rmX(G)$ we see that $T(a_1,\cdots, a_n)=M_{-\chi}$ 
(we sometimes denote $T(a_1,\cdots, a_n)$ by $T(\chi)$).
As we mentioned in Lemma~\ref{conic_characterization}, a conic divisorial ideal corresponds to a divisor $\sum_i\delta_i\calD_i$ with $(\delta_i)_i\in(-1,0]^n$ in $\Cl(R)_\RR$, and this corresponds to the character $\sum_i\delta_i\beta_i\in\rmX(G)_\RR$. 
We say that a character $\chi\in\rmX(G)$ is \emph{strongly critical} with respect to $\beta_1,\cdots,\beta_n$ if 
$\chi=\sum_ia_i\beta_i$ satisfies $a_i\in(-1,0]$ for all $i$ in $\rmX(G)_\RR$. 
Therefore, we see that a conic divisorial ideal is precisely a module of covariants associated with a strongly critical character. 

\medskip

Following \cite[Section~10]{SpVdB}, we also introduce several notations. 
Let $\calA=\mc(G,S)$ be the category of finitely generated $(G,S)$-module (i.e., $G$-equivariant $S$-modules). 
We define the $(G,S)$-module $P_\chi\coloneqq V_\chi\otimes_\kk S$. 
Any projective generator of $\calA$ is given by $P_\chi$ with $\chi\in\rmX(G)$. 
For a finite subset $\calL$ of $\rmX(G)$, we set 
$$
P_\calL\coloneqq\bigoplus_{\chi\in\calL}P_\chi \text{\quad and \quad}\Lambda_\calL\coloneqq\End_\calA(P_\calL). 
$$
In addition, for $\chi\in\rmX(G)$ we set $P_{\calL,\chi}\coloneqq\Hom_\calA(P_\calL,P_\chi)$. 
If $\chi\in\calL$, we see that $P_{\calL,\chi}$ is a right projective $\Lambda_\calL$-module by projectivization \cite[II. 2.1]{ARS}. 
Since the functor $(-)^G:\refl(G,S)\rightarrow\refl(R)$ gives equivalence (see e.g., \cite[Lemma~3.3]{SpVdB}, \cite[Section~11]{Has}), 
we see that 
$$
\Hom_\calA(P_{\chi_i},P_{\chi_j})\cong(\Hom_\kk(V_{\chi_i},V_{\chi_j})\otimes_\kk S)^G\cong\Hom_R(P_{\chi_i}^G,P_{\chi_j}^G)=\Hom_R(M_{\chi_i},M_{\chi_j}). 
$$
Thus, to consider the global dimension of $\End_R(P_\calL^G)$, we may only discuss that of $\Lambda_\calL$. 
The following two lemmas are crucial to check the finiteness of the global dimension.  

\begin{lemma}[{see \cite[Lemma~10.1]{SpVdB}}]\label{key_lem1}
 One has that $\gldim\Lambda_\calL<\infty$ if and only if $\pdim_{\Lambda_\calL}P_{\calL,\chi}<\infty$ for all $\chi\in\rmX(G)$. 
\end{lemma}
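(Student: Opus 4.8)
The plan is to prove the equivalence by transporting projective resolutions between the category $\calA=\mc(G,S)$ and the module category over $\Lambda_\calL$ along the functor $F\coloneqq\Hom_\calA(P_\calL,-)$. The forward implication is immediate: if $\gldim\Lambda_\calL<\infty$, then every finitely generated right $\Lambda_\calL$-module has finite projective dimension, and each $P_{\calL,\chi}$ is such a module. So all the content lies in the converse.

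First I would record three structural facts. (a) Since $S$ is a polynomial ring, the category $\calA$ of $G$-equivariant finitely generated $S$-modules has finite global dimension (equal to $n=\dim S$ by the equivariant Hilbert syzygy theorem); hence every $M\in\calA$ admits a projective resolution $0\to Q_s\to\cdots\to Q_0\to M\to 0$ of length $s\le n$ with each $Q_i$ a finite direct sum of objects $P_\chi$, $\chi\in\rmX(G)$. (b) Because $P_\calL$ is a projective object of $\calA$, the functor $F=\Hom_\calA(P_\calL,-)$ into finitely generated right $\Lambda_\calL$-modules is exact, sends $P_\chi$ to $P_{\calL,\chi}$, and restricts on $\add P_\calL$ to the projectivization equivalence with $F(P_\calL)=\Lambda_\calL$, in particular it is fully faithful there. (c) $F$ is essentially surjective: given a finite presentation $\Lambda_\calL^a\xrightarrow{\alpha}\Lambda_\calL^b\to N\to0$, full faithfulness on $\add P_\calL$ lets me lift $\alpha$ to $\tilde\alpha\colon P_\calL^a\to P_\calL^b$, and exactness of $F$ yields $N\cong F(\Coker\tilde\alpha)$.

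With these in hand the converse runs as follows. Take any finitely generated right $\Lambda_\calL$-module $N$ and write $N=F(M)$ via (c). Applying the exact functor $F$ to a length-$\le n$ projective resolution of $M$ from (a) produces an exact sequence $0\to F(Q_s)\to\cdots\to F(Q_0)\to N\to0$ in which every $F(Q_i)$ is a finite direct sum of modules $P_{\calL,\chi}$. By hypothesis each $P_{\calL,\chi}$ has finite projective dimension over $\Lambda_\calL$, so a standard dimension shift along this resolution gives $\pdim_{\Lambda_\calL}N<\infty$. Thus every finitely generated $\Lambda_\calL$-module has finite projective dimension.

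The remaining step, and the genuinely delicate one, is to upgrade this pointwise finiteness into the uniform bound $\gldim\Lambda_\calL<\infty$. Here I would use that $\Lambda_\calL=\End_R\!\big(\bigoplus_{\chi\in\calL}M_\chi\big)$ is module-finite over the connected graded ring $R=S^G$ and is itself graded, so that its global dimension is computed by the projective dimensions of its finitely many graded simple modules (indexed by $\calL$, since $\kk$ is algebraically closed). Each such simple is of the form $F(M)$ and therefore has finite projective dimension by the previous paragraph; taking the maximum over this finite set bounds $\gldim\Lambda_\calL$. I expect this last reduction to be the main obstacle, since it is where one must invoke the graded and module-finite structure rather than pure homological formalism; by contrast, the exactness and essential surjectivity of $F$ are the technical engine but become routine once $P_\calL$ is recognized as a projective object.
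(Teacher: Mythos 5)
The paper itself contains no proof of this lemma: it is imported verbatim from \cite[Lemma~11.1.1]{SpVdB}, so your argument can only be measured against that source, and what you have written is essentially a reconstruction of it. Both arguments rest on the same two pillars: (i) the functor $F=\Hom_\calA(P_\calL,-)$ is exact and carries finite resolutions by objects $P_\chi$ in $\calA$ to finite resolutions by the modules $P_{\calL,\chi}$, whose projective dimensions are finite by hypothesis; and (ii) $\Lambda_\calL$ is non-negatively graded (by total degree in $S$) with semisimple degree-zero part, so its global dimension is controlled by the finitely many graded simples indexed by $\calL$. The only real difference is packaging: \v{S}penko--Van den Bergh apply (i) just to the graded simples, which they resolve explicitly by the Koszul complex of $V_\mu\otimes_\kk S/S_+$ for $\mu\in\calL$, whereas you prove the stronger statement that \emph{every} finitely generated $\Lambda_\calL$-module has finite projective dimension, using essential surjectivity of $F$ together with $\gldim\calA<\infty$ (the equivariant Hilbert syzygy theorem, i.e.\ the same Koszul complex in disguise); the extra generality costs nothing but is also not needed, since your final step only consumes the simples.

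Two points in your write-up should be tightened, though neither is a genuine gap. First, in (a) the last syzygy $Q_s$ is only known to be a projective object of $\calA$, i.e.\ a direct summand of a finite direct sum of $P_\chi$'s; asserting it is an actual direct sum of $P_\chi$'s amounts to an equivariant Quillen--Suslin theorem, which you do not need, because a summand is equally good for the dimension shift. Second, your concluding reduction requires knowing that the degree-zero part of $\Lambda_\calL$ is semisimple; this follows from the computation $\Hom_\calA(P_\chi,P_{\chi'})\cong S_{\chi-\chi'}$, whose total-degree-zero component is $\delta_{\chi\chi'}\kk$, so $(\Lambda_\calL)_0\cong\prod_{\chi\in\calL}\kk$. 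With that in hand the standard graded fact you invoke applies, and even the weaker comparison $\gldim\Lambda_\calL\le\mathrm{gr.gldim}\,\Lambda_\calL+1$ would suffice, since the lemma claims only finiteness.
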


Let $\rmY(G)$ be the group of one-parameter subgroups of $G$, which is isomorphic to $\ZZ^{n-d}$. 
We say that $\chi\in\rmX(G)$ is \emph{separated} from $\calL$ by $\lambda\in\rmY(G)_\RR$ if $\langle\lambda,\chi\rangle<\langle\lambda,\mu\rangle$ 
for any $\mu\in\calL$. 

Let $K_\lambda$ be the subspace of $W$ spanned by representations corresponding to $\beta_{i_j}$ with $\langle\lambda,\beta_{i_j}\rangle>0$, 
and let $d_\lambda\coloneqq\dim_\kk K_\lambda$. 
Consider the Koszul resolution:  
$$
0\longrightarrow \wedge^{d_\lambda}K_\lambda\otimes_\kk S \longrightarrow\wedge^{d_\lambda-1}K_\lambda\otimes_\kk S \longrightarrow \cdots \longrightarrow S \longrightarrow S(W/K_\lambda)\longrightarrow 0.
$$ 
Applying $(\chi\otimes_\kk-)=(V_\chi\otimes_\kk-)$ to this, we have the exact sequence $C_{\lambda,\chi}$: 
\begin{equation}
\begin{split}
\label{chi_sequence}
0\longrightarrow (\chi\otimes_\kk\wedge^{d_\lambda}K_\lambda)\otimes_\kk S \overset{\delta_{d_\lambda}}{\longrightarrow}(\chi\otimes_\kk\wedge^{d_\lambda-1}K_\lambda)&\otimes_\kk S \overset{\delta_{d_{\lambda-1}}}{\longrightarrow} \cdots \\ &\overset{\delta_1}{\longrightarrow} \chi\otimes_\kk S \longrightarrow \chi\otimes_\kk S(W/K_\lambda)\longrightarrow 0. 
\end{split}
\end{equation}
We note that for $p=1,\cdots, d_\lambda$ the $(G,S)$-module $(\chi\otimes_\kk\wedge^pK_\lambda)\otimes_\kk S$ is decomposed as the direct sum of $(G,S)$-modules $P_\mu$ 
of the form $\mu=\chi+\beta_{i_1}+\cdots+\beta_{i_p}$ where $\{i_1, \cdots, i_p\}\subset \{1, \cdots, n\}$, 
$i_j\neq i_{j^\prime}$ if $j\neq j^\prime$, and $\langle\lambda,\beta_{i_j}\rangle>0$. 
Let $C_{\calL,\lambda,\chi}$ be the complex obtained by applying $\Hom_\calA(P_\calL,-)$ to $C_{\lambda,\chi}$. 
Then the following lemma holds. 
We remark that $\Hom_\calA(P_\calL,\chi\otimes_\kk S(W/K_\lambda))=0$ in this situation, and these kinds of arguments also hold 
for any toric rings which are not necessarily Hibi rings. 

\begin{lemma}[{see \cite[Lemma~10.2]{SpVdB}}]\label{key_lem2}
Suppose that $\chi\in\rmX(G)$ is separated from $\calL$ by $\lambda\in\rmY(G)_\RR$. 
Then, the complex $C_{\calL,\lambda,\chi}$ is acyclic. 
In addition, the $0$-th term of $C_{\calL,\lambda,\chi}$ is $P_{\calL,\chi}$ and for $p=1,\cdots, d_\lambda$ 
the $-p$-th term is the direct sum of $P_{\calL,\mu}$ of the form 
$$
\mu=\chi+\beta_{i_1}+\cdots+\beta_{i_p}
$$
where $\{i_1, \cdots, i_p\}\subset \{1, \cdots, n\}$, $i_j\neq i_{j^\prime}$ if $j\neq j^\prime$, and $\langle\lambda,\beta_{i_j}\rangle>0$. 
\end{lemma}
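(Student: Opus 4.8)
The plan is to obtain $C_{\calL,\lambda,\chi}$ by applying the exact functor $\Hom_\calA(P_\calL,-)$ to the exact complex $C_{\lambda,\chi}$ of \eqref{chi_sequence}, and to deduce acyclicity from a single weight-theoretic vanishing. First I would record that, since $G$ is an algebraic torus, every irreducible representation $V_\chi$ is one-dimensional, so the twisting functor $V_\chi\otimes_\kk(-)=(\chi\otimes_\kk-)$ is exact; applying it to the Koszul resolution of $S(W/K_\lambda)$ therefore produces the exact complex $C_{\lambda,\chi}$, a resolution of $\chi\otimes_\kk S(W/K_\lambda)$. Next, because $P_\calL$ is a projective object of $\calA=\mc(G,S)$, the functor $\Hom_\calA(P_\calL,-)$ is exact, so applying it to $C_{\lambda,\chi}$ \emph{including} its augmentation term yields an exact sequence terminating in $\Hom_\calA(P_\calL,\chi\otimes_\kk S(W/K_\lambda))$.

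I would then identify the terms. For $p=1,\dots,d_\lambda$, the weight basis of $\wedge^pK_\lambda$ indexed by the $p$-subsets $\{i_1,\dots,i_p\}$ with $\langle\lambda,\beta_{i_j}\rangle>0$ decomposes $(\chi\otimes_\kk\wedge^pK_\lambda)\otimes_\kk S$ into a direct sum of the $P_\mu$ with $\mu=\chi+\beta_{i_1}+\cdots+\beta_{i_p}$, exactly as recorded after \eqref{chi_sequence}. Applying $\Hom_\calA(P_\calL,-)$ and using $\Hom_\calA(P_\calL,P_\mu)=P_{\calL,\mu}$ gives the asserted description of the $(-p)$-th term as a direct sum of the $P_{\calL,\mu}$, while the $0$-th term is $\Hom_\calA(P_\calL,\chi\otimes_\kk S)=\Hom_\calA(P_\calL,P_\chi)=P_{\calL,\chi}$.

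The main obstacle, and the only point where the separation hypothesis is used, is the vanishing $\Hom_\calA(P_\calL,\chi\otimes_\kk S(W/K_\lambda))=0$; granting it, the exact sequence from the first paragraph truncates and shows $C_{\calL,\lambda,\chi}$ is acyclic. To prove the vanishing I would invoke the free--forgetful adjunction $\Hom_\calA(P_\mu,N)=\Hom_G(V_\mu,N)=N_\mu$, the $\mu$-weight space of $N$, which reduces the claim to showing that no $\mu\in\calL$ occurs as a weight of $N=\chi\otimes_\kk S(W/K_\lambda)$. Since $S(W/K_\lambda)$ is the polynomial ring on the variables $x_i$ with $\langle\lambda,\beta_i\rangle\le0$, every weight of $N$ has the form $\chi+\sum_{i:\langle\lambda,\beta_i\rangle\le0}c_i\beta_i$ with $c_i\in\ZZ_{\ge0}$, and hence pairs with $\lambda$ to give a value $\le\langle\lambda,\chi\rangle$. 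As $\chi$ is separated from $\calL$ by $\lambda$, one has $\langle\lambda,\chi\rangle<\langle\lambda,\mu\rangle$ for every $\mu\in\calL$, so no such weight can equal any $\mu\in\calL$; thus each $N_\mu$ with $\mu\in\calL$ vanishes and $\Hom_\calA(P_\calL,N)=\bigoplus_{\mu\in\calL}N_\mu=0$. The remaining bookkeeping, namely that surjectivity of the last differential follows from the vanishing of the augmentation term and hence that the truncated complex is exact in every degree, is routine homological algebra.
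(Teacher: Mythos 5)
Your proof is correct and takes essentially the same route as the paper, which does not prove the lemma itself but cites \cite[Lemma~11.2.1]{SpVdB} and records exactly the key point you establish, namely the vanishing $\Hom_\calA(P_\calL,\chi\otimes_\kk S(W/K_\lambda))=0$ under the separation hypothesis. Your weight-space computation of that vanishing, combined with projectivity of $P_\calL$ and the term-by-term identification already noted after \eqref{chi_sequence}, is precisely the intended argument filling in that citation.
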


\subsection{Construction of NCCRs for Segre products of polynomial rings} 

In this subsection, we construct an NCCR for the Segre products of polynomial rings, which are realized as Hibi rings. 
First, we remark that a ring admitting an NCCR is $\QQ$-Gorenstein (see \cite[Theorem~1.1]{DITW}). 
Since the class group of a Hibi ring is a free abelian group, 
a Hibi ring admitting an NCCR is Gorenstein. We recall that Gorenstein Hibi rings arise from pure posets (\cite[Section~3]{Hibi}). 
For the case of the Segre product of polynomial rings, we see that it is Gorenstein if and only if polynomial rings have the same variables. 

Let $R=S_1\# S_2\#\cdots\# S_t$ be the Segre product of $r$-dimensional polynomial rings, where $r\ge2$. 
This is realized as the Hibi ring associated with the pure poset shown in Figure~\ref{segre_poset_gor}, 
and we have that $\Cl(R)\cong\ZZ^{t-1}$ (see Example \ref{segre}). 
Let $G=\Hom(\Cl(R),\kk^\times)$. Then, we see that $R=S^G$ where $S=S(W)$ is a polynomial ring with $tr$ variables 
arising from the representation $W=\bigoplus_{s=1}^{tr}V_{\beta_s}$. Here, $\beta_s\in\rmX(G)\cong\ZZ^{t-1}$ is the character corresponding to 
a prime divisor $\calD_s$ on $\Spec R$. We see that by the relations (\ref{relation_div_hibi}) each $\beta_s$ takes the form 
$\bar{\beta}_i=(0,\cdots,0,\overset{i}{\check{1}},0,\cdots,0) \in \ZZ^{t-1}$ for $i=1,\cdots,t-1$ or $\bar{\beta}_t=(-1,\cdots,-1)$, 
and each of these has the multiplicity $r$. 

\begin{figure}[h]
\begin{center}
{\scalebox{0.6}{
\begin{tikzpicture}

\coordinate (Min) at (3,-1.5); \coordinate (Max) at (3,6.5);
\coordinate (N11) at (0,0); \coordinate (N12) at (0,1); \coordinate (N13) at (0,2); \coordinate (N14) at (0,4); \coordinate (N15) at (0,5);
\coordinate (N21) at (2,0); \coordinate (N22) at (2,1); \coordinate (N23) at (2,2); \coordinate (N24) at (2,4); \coordinate (N25) at (2,5);
\coordinate (Ntt1) at (4,0); \coordinate (Ntt2) at (4,1); \coordinate (Ntt3) at (4,2); \coordinate (Ntt4) at (4,4); \coordinate (Ntt5) at (4,5);
\coordinate (Nt1) at (6,0); \coordinate (Nt2) at (6,1); \coordinate (Nt3) at (6,2); \coordinate (Nt4) at (6,4); \coordinate (Nt5) at (6,5);
\draw[line width=0.05cm]  (N11)--(N12); \draw[line width=0.05cm]  (N12)--(N13); \draw[line width=0.05cm]  (N13)--(0,2.5); 
\draw[line width=0.05cm]  (0,3.5)--(N14); \draw[line width=0.05cm]  (N14)--(N15); \draw[line width=0.05cm,dotted]  (0,2.8)--(0,3.2);
\draw[line width=0.05cm]  (N21)--(N22); \draw[line width=0.05cm]  (N22)--(N23); \draw[line width=0.05cm]  (N23)--(2,2.5); 
\draw[line width=0.05cm]  (2,3.5)--(N24); \draw[line width=0.05cm]  (N24)--(N25); \draw[line width=0.05cm,dotted]  (2,2.8)--(2,3.2);
\draw[line width=0.05cm]  (Nt1)--(Nt2); \draw[line width=0.05cm]  (Nt2)--(Nt3); \draw[line width=0.05cm]  (Nt3)--(6,2.5); 
\draw[line width=0.05cm]  (6,3.5)--(Nt4); \draw[line width=0.05cm]  (Nt4)--(Nt5); \draw[line width=0.05cm,dotted]  (6,2.8)--(6,3.2);
\draw[line width=0.05cm]  (Ntt1)--(Ntt2); \draw[line width=0.05cm]  (Ntt2)--(Ntt3); \draw[line width=0.05cm]  (Ntt3)--(4,2.5); 
\draw[line width=0.05cm]  (4,3.5)--(Ntt4); \draw[line width=0.05cm]  (Ntt4)--(Ntt5); \draw[line width=0.05cm,dotted]  (4,2.8)--(4,3.2);
\draw[line width=0.05cm]  (N11)--(Min); \draw[line width=0.05cm]  (N21)--(Min); \draw[line width=0.05cm]  (Ntt1)--(Min); \draw[line width=0.05cm]  (Nt1)--(Min); 
\draw[line width=0.05cm]  (N15)--(Max); \draw[line width=0.05cm]  (N25)--(Max); \draw[line width=0.05cm]  (Ntt5)--(Max); \draw[line width=0.05cm]  (Nt5)--(Max); 
\draw[line width=0.05cm,dotted]  (2.5,3)--(3.5,3); 

\draw [line width=0.05cm, fill=lightgray] (Min) circle [radius=0.15] ; \draw [line width=0.05cm, fill=lightgray] (Max) circle [radius=0.15] ; 
\draw [line width=0.05cm, fill=white] (N11) circle [radius=0.15] ; \draw [line width=0.05cm, fill=white] (N12) circle [radius=0.15] ;
\draw [line width=0.05cm, fill=white] (N13) circle [radius=0.15] ; \draw [line width=0.05cm, fill=white] (N14) circle [radius=0.15] ;
\draw [line width=0.05cm, fill=white] (N15) circle [radius=0.15] ;
\draw [line width=0.05cm, fill=white] (N21) circle [radius=0.15] ; \draw [line width=0.05cm, fill=white] (N22) circle [radius=0.15] ;
\draw [line width=0.05cm, fill=white] (N23) circle [radius=0.15] ; \draw [line width=0.05cm, fill=white] (N24) circle [radius=0.15] ;
\draw [line width=0.05cm, fill=white] (N25) circle [radius=0.15] ;
\draw [line width=0.05cm, fill=white] (Nt1) circle [radius=0.15] ; \draw [line width=0.05cm, fill=white] (Nt2) circle [radius=0.15] ;
\draw [line width=0.05cm, fill=white] (Nt3) circle [radius=0.15] ; \draw [line width=0.05cm, fill=white] (Nt4) circle [radius=0.15] ;
\draw [line width=0.05cm, fill=white] (Nt5) circle [radius=0.15] ;
\draw [line width=0.05cm, fill=white] (Ntt1) circle [radius=0.15] ; \draw [line width=0.05cm, fill=white] (Ntt2) circle [radius=0.15] ;
\draw [line width=0.05cm, fill=white] (Ntt3) circle [radius=0.15] ; \draw [line width=0.05cm, fill=white] (Ntt4) circle [radius=0.15] ;
\draw [line width=0.05cm, fill=white] (Ntt5) circle [radius=0.15] ;

\node at (3,-2.1) {\Large$\widehat{0}$}; \node at (3,7.1) {\Large$\widehat{1}$}; 

\node at (9,0) {$$}; 
\draw [line width=0.03cm, decorate,decoration={brace,amplitude=10pt}](-1.1,0) -- (-1.1,5) node[black,midway,xshift=-1.3cm,yshift=0.2cm] {\Large $r-1$}
node[black,midway,xshift=-1.1cm,yshift=-0.2cm] {\Large vertices}; 
\draw [line width=0.03cm, decorate,decoration={brace,amplitude=10pt,mirror}](0,-2.6) -- (6,-2.6) node[black,midway,yshift=-0.7cm] {\Large $t$ columns}; 

\end{tikzpicture}
} }
\end{center}
\caption{The Hasse diagram of the poset giving the Segre product of polynomial rings that is Gorenstein}
\label{segre_poset_gor}
\end{figure}

We see that by Theorem~\ref{thm:conic} (see also Example~\ref{segre}) conic divisorial ideals of $R$ are represented by
\begin{align*}
\calC(R)=\{(c_1,\cdots,c_{t-1})\in\Cl(R) \mid & -(r-1) \leq c_i \leq r-1 \ \text{for} \  1\leq i\leq t-1, \\
& -(r-1) \leq c_i-c_j \leq r-1 \ \text{for} \  1\leq i<j\leq t-1\}
\end{align*}
as elements in $\Cl(R)\cong\rmX(G)$. 
We remark that as we mentioned a conic divisorial ideal corresponds to a strongly critical character. 
Thus, the description of $\calC(R)$ is also deduced from the description of the characters $\bar{\beta}_1,\cdots, \bar{\beta}_t$, which have the multiplicity $r$. 
Also, we will consider elements in $\calC(R)$ as strongly critical characters. We then set 
\begin{equation}
\label{def_L}
\calL\coloneqq\{c=(c_1,\cdots,c_{t-1}) \in \calC(R) \mid 0\le c_i\le r-1 \text{\ \ for any \ } i=1,\cdots,t-1\} 
\end{equation}
and this finite set gives an NCCR of $R$ as follows. 

\begin{theorem}
\label{NCCR_Segre}
Let $R$ be the Segre product of $r$-dimensional polynomial rings as above. 
Let 
$$
M_\calL\coloneqq\bigoplus_{\chi\in\calL}M_\chi=\bigoplus_{\chi\in\calL}T(-\chi), 
$$
where $\calL$ is a finite set $(\ref{def_L})$. Then, $\End_R(M_\calL)$ is an NCCR of $R$. 
$($Remark that this NCCR is splitting and the number of non-isomorphic direct summands of $M_\calL$ is $r^{t-1}$.$)$  
\end{theorem}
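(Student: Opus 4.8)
The plan is to use that $R$ is Gorenstein --- being the Segre product of polynomial rings of the same dimension --- so that by Remark~\ref{rem_NCCR}(1) it is enough to prove two things: that $\End_R(M_\calL)$ has finite global dimension, and that it is a maximal Cohen--Macaulay $R$-module. Throughout I would work in the category $\calA=\mc(G,S)$ via the equivalence $(-)^G$ and the identification $\Hom_\calA(P_\chi,P_{\chi'})\cong\Hom_R(M_\chi,M_{\chi'})$, so that $\End_R(M_\calL)\cong\Lambda_\calL$ and the two tasks become bounding $\gldim\Lambda_\calL$ and checking the depth of each Hom-summand. As a preliminary I would record that the defining inequalities of $\calC(R)$ are automatic once $0\le c_i\le r-1$, so $\calL=\{0,1,\ldots,r-1\}^{t-1}$, whence $|\calL|=r^{t-1}$ as claimed.

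For finite global dimension I would follow the \v{S}penko--Van den Bergh machinery. By Lemma~\ref{key_lem1} it suffices to show $\pdim_{\Lambda_\calL}P_{\calL,\chi}<\infty$ for every $\chi\in\rmX(G)=\ZZ^{t-1}$. If $\chi\in\calL$ then $P_{\calL,\chi}$ is projective. If $\chi\notin\calL$, then $\chi$ is a lattice point outside the box $[0,r-1]^{t-1}=\mathrm{conv}(\calL)$, so I can strictly separate $\chi$ from $\calL$ by some $\lambda\in\rmY(G)_\RR$; Lemma~\ref{key_lem2} then produces a finite acyclic complex $C_{\calL,\lambda,\chi}$ resolving $P_{\calL,\chi}$ whose other terms are sums of $P_{\calL,\mu}$ with $\mu=\chi+\beta_{i_1}+\cdots+\beta_{i_p}$ and $\langle\lambda,\beta_{i_j}\rangle>0$, so $\langle\lambda,\mu\rangle>\langle\lambda,\chi\rangle$. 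I would then run a Noetherian induction on a height function measuring the distance of $\chi$ from $\calL$, choosing $\lambda$ so that every $\mu$ that occurs is strictly closer to the box, which reduces the claim to the projective case.

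For the Cohen--Macaulay property I would use the decomposition $\End_R(M_\calL)=\bigoplus_{\chi,\chi'\in\calL}M_{\chi'-\chi}$ as $R$-modules; since depth of a finite direct sum is the minimum of the depths, it is equivalent to prove that $M_c$ is MCM for each $c\in\calL-\calL$. Realizing $R$ as the diagonal section ring of $X=(\mathbb{P}^{r-1})^{\times t}$ under $\calO(1,\ldots,1)$, the module $M_c$ is $\bigoplus_{n\in\ZZ}H^0\big(X,\calO(n+a_1,\ldots,n+a_t)\big)$ with twist vector $(a_1,\ldots,a_t)=(c_1,\ldots,c_{t-1},0)$. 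Combining the Goto--Watanabe description of the local cohomology of a Segre product with the Künneth formula and Bott vanishing on each factor, I would show that $M_c$ is MCM precisely when none of the intermediate cohomology groups $H^j(X,\calO(n+\vec a))$, $0<j<\dim X$, is nonzero for any $n$; since each $\mathbb{P}^{r-1}$ contributes cohomology only in degrees $0$ (when $m\ge0$) and $r-1$ (when $m\le-r$), this translates into the purely combinatorial condition that the sorted multiset $\{a_1,\ldots,a_t\}$ has every consecutive gap at most $r-1$.

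It then remains to verify this gap condition for all $c\in\calL-\calL$. Here each coordinate $c_i=\chi'_i-\chi_i$ lies in $[-(r-1),r-1]$ and the value $0$ always occurs in the twist multiset; consequently no two consecutive sorted twists can straddle $0$, so each gap lies inside $[0,r-1]$ or $[-(r-1),0]$ and is at most $r-1$. This shows every $M_c$ is MCM, hence $\End_R(M_\calL)$ is MCM, completing the proof. I expect the genuinely new difficulty to be the MCM step: because $W$ is not quasi-symmetric when $t\ge3$, Cohen--Macaulayness is not covered by \cite{SpVdB}, and establishing the clean gap criterion through the local-cohomology and Bott-vanishing computation is where the real work lies; a secondary subtlety is ensuring, in the global-dimension argument, that the separating functionals can be chosen so that the inductive resolutions terminate rather than cycle.
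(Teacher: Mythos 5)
Your overall architecture matches the paper's: reduce via Remark~\ref{rem_NCCR}(1) to showing (i) $\gldim \End_R(M_\calL)<\infty$, attacked through $\Lambda_\calL$ and Lemmas~\ref{key_lem1} and~\ref{key_lem2}, and (ii) that $\End_R(M_\calL)\cong\bigoplus_{\chi,\chi'\in\calL}M_{\chi'-\chi}$ is MCM, attacked through a gap criterion for rank-one MCM classes. Your part (ii) is in substance the paper's proof: the paper quotes the criterion from \cite[Section~2]{Bru} rather than re-deriving it by K\"unneth and local cohomology on $(\mathbb{P}^{r-1})^{\times t}$ (your derivation is plausible but extra work), and your final verification --- each coordinate of $\chi'-\chi$ lies in $[-(r-1),r-1]$ while $0$ occurs in the twist multiset, so no consecutive sorted pair straddles $0$ and every gap is at most $r-1$ --- is exactly the content of Lemmas~\ref{MCM_lem1} and~\ref{MCM_lem2}. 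Your count $|\calL|=r^{t-1}$ is also correct.

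The genuine gap is in (i), at the point you dismiss as a ``secondary subtlety''; it is actually the crux, and the step as you state it fails. You want, for each $\chi\notin\calL$, a separating $\lambda$ ``so that every $\mu$ that occurs is strictly closer to the box''. No such $\lambda$ exists in general. Take $\chi=(r,0,\ldots,0)$ with $t\geq 3$. Since $0\in\calL$, separation forces $\langle\lambda,\chi\rangle=r\lambda_1<0$, so $\lambda_1<0$ and $\bar{\beta}_1$ is never a positive weight; the positive weights lie among $\bar{\beta}_2,\ldots,\bar{\beta}_{t-1}$ and $\bar{\beta}_t=(-1,\ldots,-1)$. Hence every term of the complex in Lemma~\ref{key_lem2} either keeps first coordinate equal to $r$ (terms built only from $\bar{\beta}_2,\ldots,\bar{\beta}_{t-1}$), so is no closer to $\calL$, or involves copies of $\bar{\beta}_t$ and produces characters such as $\chi+r\bar{\beta}_t=(0,-r,\ldots,-r)$, which is strictly farther from $\calL$ (sup-distance $r$ versus $1$ for $\chi$). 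So no single monotone ``distance to $\calL$'' can drive the induction. The paper is structured precisely to dodge this: inside the enlarged box $\widetilde{\calL}=[-(r-1),r-1]^{t-1}$ it inducts over the nested sets $\widetilde{\calL}_j(k)$, always taking $\lambda=(0,\ldots,0,1,0,\ldots,0)$ with the $1$ in the largest position $j$ where $\chi_j<0$, so that the only positive weight is $\bar{\beta}_j$ and the syzygy terms move a single coordinate upward while remaining in the box; and for $\chi$ outside $\widetilde{\calL}$ it makes no distance argument at all, invoking instead the argument of \cite[subsection~11.3]{SpVdB}. Your sketch can be repaired --- for instance by a two-phase lexicographic height: first strictly reduce the excess $\sum_i\max(\chi_i-(r-1),0)$ using $\lambda$ the negative of the $j$-th coordinate functional (whose only positive weight is $\bar{\beta}_t$), accepting that coordinates may be pushed below $0$, and then reduce the deficit $\sum_i\max(-\chi_i,0)$ as the paper does --- but some such two-phase or \cite{SpVdB}-style argument must be supplied; as written, your induction collapses at the first character outside $[0,r-1]^{t-1}$.
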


\begin{remark}
If $r=t=2$, this $R$ is isomorphic to the conifold $\kk[X,Y,Z,W]/(XW-YZ)$ (or $3$-dimensional $A_1$-singularity) and an NCCR of the conifold is one of the prototypes for the study of NCCRs (see e.g., \cite{VdB3}), and Theorem~\ref{NCCR_Segre} is a generalization of this well-known example. 
Also, we easily see that $R$ is coming from a quasi-symmetric representation if and only if $t=2$, 
in which case the existence of NCCRs is covered by \cite{SpVdB}, and is also studied in \cite{Kuz} using tilting bundles. 
Furthermore, if $t=2$ we can consider R as a determinantal ring, and NCCRs of determinantal rings had been constructed in \cite{BLVdB}. 

The existence of NCCRs for other Gorenstein Hibi rings is still open. 
Some affirmative results can be found in \cite{Nak2}, that is, 
the second author constructed NCCRs for a Gorenstein Hibi ring whose class group is $\ZZ$ or $\ZZ^2$. 
\end{remark}

\begin{proof}[Proof of Theorem~\ref{NCCR_Segre}]
We set 
$$
\Sigma\coloneqq\left\{ \sum_{s=1}^{tr} a_s\beta_s \mid a_s\in(-1,0] \right\}\subset\rmX(G)_\RR. 
$$
That is, an element in $\Sigma$ is a strongly critical character. 
Thus, identifying elements in $\Sigma$ with certain elements in $\rmX(G)$, we see that $\rmX(G)\cap\Sigma$ coincides with $\calC(R)$. 
Let 
$$
\widetilde{\calL}\coloneqq\{c=(c_1,\cdots,c_{t-1}) \in \calC(R) \mid  -(r-1) \leq c_i \leq r-1  \text{\ \ for any \ } i=1,\cdots,t-1\}. 
$$
We note that $\calL \subset \calC(R)=\rmX(G)\cap\Sigma \subset \widetilde{\calL}$. 
Furthermore, we set 
$$
\widetilde{\calL}_j\coloneqq\{c=(c_1,\cdots,c_{t-1}) \in \widetilde{\calL} \mid c_{j+1} \geq 0, \cdots, c_{t-1} \geq 0\} 
$$
for $0 \leq j \leq t-1$. 
Note that $\calL=\widetilde{\calL}_0 \subset \widetilde{\calL}_1 \subset \cdots \subset \widetilde{\calL}_{t-1} =\widetilde{\calL}$. 
For $1 \leq k \leq r-1$, let 
$$
\widetilde{\calL}_j(k)\coloneqq\{c=(c_1,\cdots,c_{t-1}) \in \widetilde{\calL}_j \mid c_j \geq -k \}.
$$

\medskip

In the following, we show the finiteness of the global dimension of $\End_R(M_\calL)$. 
To show this, we may only consider $\Lambda_\calL=\End_\calA(\bigoplus_{\chi\in\calL}P_\chi)$, 
and check that $\pdim_{\Lambda_\calL}P_{\calL,\chi}<\infty$ for any $\chi\in\rmX(G)$ by Lemma~\ref{key_lem1}. 
We first show that $\pdim_{\Lambda_\calL}P_{\calL,\chi}<\infty$ for any $\chi \in \widetilde{\calL}$. 
We prove this assertion by the induction on $j$ and $k$. (We remark that $\widetilde{\calL}=\widetilde{\calL}_{t-1}(r-1)$.) 

For any $\chi \in \widetilde{\calL}_j \setminus \widetilde{\calL}_{j-1}$, 
let $\lambda=(0,\cdots,0,\overset{j}{\check{1}},0,\cdots,0)$. Then, we can see that 
$$\langle \lambda, \chi \rangle < 0 \leq \langle \lambda, \chi^\prime\rangle \;\; \text{for any }\chi^\prime\in \calL.$$ 
Hence, $\chi$ is separated from $\calL$ by $\lambda$, 
and we have that $\langle \lambda, \bar{\beta}_j  \rangle > 0$ and $\langle \lambda, \bar{\beta}_{i} \rangle \leq 0$ for any $i\not=j$. 

\medskip

\noindent({\bf The case $j=1$}): 
For any $\chi \in \widetilde{\calL}_1(1) \setminus \calL$, we see that 
$\chi + \beta_{i_1} + \cdots + \beta_{i_p} \in \calL$, where $\beta_{i_1}=\cdots=\beta_{i_p}=\bar{\beta}_1$ and $1 \leq p \leq r$. 
Hence, by Lemma~\ref{key_lem2} we have that $\pdim_{\Lambda_\calL}P_{\calL,\chi}<\infty$. 
We then assume that $\pdim_{\Lambda_\calL}P_{\calL,\chi}<\infty$ for any $\chi \in \widetilde{\calL}_1(k)$. 
Then, for any $\chi' \in \widetilde{\calL}_1(k+1) \setminus \widetilde{\calL}_1(k)$, we see that 
$\chi^\prime + \beta_{i_1} + \cdots + \beta_{i_p} \in \widetilde{\calL}_1(k)$. 
Hence, $\pdim_{\Lambda_\calL}P_{\calL,\chi^\prime}<\infty$ by Lemma~\ref{key_lem2}. 
Therefore, $\pdim_{\Lambda_\calL}P_{\calL,\chi}<\infty$ for any $\chi \in \widetilde{\calL}_1$. 

\noindent({\bf The case $j>1$}): 
We assume that $\pdim_{\Lambda_\calL}P_{\calL,\chi}<\infty$ for any $\chi \in \widetilde{\calL}_{j-1}$ with $j \geq 2$. 
For any $\chi \in \widetilde{\calL}_j(1) \setminus \widetilde{\calL}_{j-1}$, we see that 
$\chi + \beta_{i_1} + \cdots + \beta_{i_p} \in \widetilde{\calL}_{j-1}$, where $\beta_{i_1}=\cdots=\beta_{i_p}=\bar{\beta}_j$ and $1 \leq p \leq r$. 
Hence, by Lemma~\ref{key_lem2} we have that $\pdim_{\Lambda_\calL}P_{\calL,\chi}<\infty$. 
We then assume that $\pdim_{\Lambda_\calL}P_{\calL,\chi}<\infty$ for any $\chi \in \widetilde{\calL}_j(k)$. 
Then, for any $\chi' \in \widetilde{\calL}_j(k+1) \setminus \widetilde{\calL}_j(k)$, we see that 
$\chi^\prime + \beta_{i_1} + \cdots + \beta_{i_p} \in \widetilde{\calL}_1(k)$. 
Hence, $\pdim_{\Lambda_\calL}P_{\calL,\chi^\prime}<\infty$ by Lemma~\ref{key_lem2}. 
Therefore, $\pdim_{\Lambda_\calL}P_{\calL,\chi}<\infty$ for any $\chi \in \widetilde{\calL}_j$. 

\medskip

Consequently, we obtain that $\pdim_{\Lambda_\calL}P_{\calL,\chi}<\infty$ for any $\chi \in \widetilde{\calL}_{t-1}(r-1)=\widetilde{\calL}$, 
and so is for any $\chi\in\rmX(G)\cap\Sigma$ in particular. 
If we assume that $\gldim\Lambda_\calL=\infty$, then by Lemma~\ref{key_lem1} there exists $\chi\in\rmX(G)$ such that 
$\pdim_{\Lambda_\calL}P_{\calL,\chi}=\infty$, 
and this $\chi$ should be in $\rmX(G)\setminus\widetilde{\calL}\subset\rmX(G)\setminus\rmX(G)\cap\Sigma$ by the above observation. 
Using the same argument as in \cite[Subsection~10.3]{SpVdB}, we can conclude that this is a contradiction. 

In order to show that $\End_R(M_\calL)$ is an NCCR, we have to show that $\End_R(M_\calL)$ is an MCM $R$-module (see Remark~\ref{rem_NCCR}). 
This follows from Lemma~\ref{MCM_lem1} and \ref{MCM_lem2} below. 
\end{proof}

To complete the proof of Theorem~\ref{NCCR_Segre}, we prove the following Lemmas. 

\begin{lemma}
\label{MCM_lem1}
For any $\chi,\chi' \in \calL$, we have that $\chi-\chi'\in \widetilde{\calL}$. 
\end{lemma}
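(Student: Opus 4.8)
The plan is to reduce the statement to an elementary coordinate-wise estimate, using the explicit box descriptions of $\calL$ and $\widetilde{\calL}$. First I would make the description of $\calL$ fully explicit: once $0\le c_i\le r-1$ holds for every $i$, the ``off-diagonal'' constraints $-(r-1)\le c_i-c_j\le r-1$ cutting out $\calC(R)$ are automatically satisfied, so that
\[
\calL=\{(c_1,\cdots,c_{t-1})\in\ZZ^{t-1}\mid 0\le c_i\le r-1\ \text{for all}\ i\},
\]
the full lattice box of side $r$ (consistent with $|\calL|=r^{t-1}$ in Theorem~\ref{NCCR_Segre}). Likewise $\widetilde{\calL}$ is the lattice box $\{c\mid -(r-1)\le c_i\le r-1\ \text{for all}\ i\}$, i.e. the bounding box of $\calC(R)$, so that $\calL\subset\calC(R)\subset\widetilde{\calL}$.

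With these identifications the claim is purely coordinate-wise. Writing $\chi=(c_1,\cdots,c_{t-1})$ and $\chi'=(c_1',\cdots,c_{t-1}')$ with $0\le c_i,c_i'\le r-1$, the $i$-th coordinate of $\chi-\chi'$ is $c_i-c_i'$, and combining $0\le c_i\le r-1$ with $0\le c_i'\le r-1$ yields
\[
-(r-1)\le c_i-c_i'\le r-1\qquad\text{for every}\ i=1,\cdots,t-1.
\]
Since membership in $\widetilde{\calL}$ is governed exactly by these per-coordinate bounds, we conclude $\chi-\chi'\in\widetilde{\calL}$.

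I expect essentially no computational obstacle; the whole content is the single observation that $\widetilde{\calL}$ is the \emph{bounding box} of $\calC(R)$, so that only the ranges $c_i\in[0,r-1]$ — and none of the circuit/difference inequalities defining $\calC(R)$ — enter the estimate. This is the one point to get right, because the analogous statement with $\widetilde{\calL}$ replaced by $\calC(R)$ itself is false: for $t\ge 3$ the choices $\chi=(r-1,0,0,\cdots)$ and $\chi'=(0,r-1,0,\cdots)$ give $\chi-\chi'=(r-1,-(r-1),0,\cdots)$, whose first two coordinates differ by $2(r-1)>r-1$. The lemma is exactly what makes $\Hom_R(M_\chi,M_{\chi'})\cong M_{\chi'-\chi}$ land in the enlarged box, where it remains MCM (via Lemma~\ref{MCM_lem2}) even though it need no longer be conic; this is precisely why the MCM-ness of $\End_R(M_\calL)$ requires $\widetilde{\calL}$ and not merely $\calC(R)$.
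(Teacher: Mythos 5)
Your proof is correct and is essentially the paper's own argument: the paper's entire proof is the remark that $\{\alpha-\alpha' \mid \alpha,\alpha' \in [0,a]^d\} \subset [-a,a]^d$, which is exactly your coordinate-wise estimate with $a=r-1$ and $d=t-1$. Your supporting observations --- that the difference constraints defining $\calC(R)$ are automatically satisfied on the box $[0,r-1]^{t-1}$, so $\calL$ is a full lattice box of cardinality $r^{t-1}$, and that $\widetilde{\calL}$ must be read as the bounding box inside $\Cl(R)$ rather than as a subset of $\calC(R)$ (the ``$\in \calC(R)$'' in the paper's displayed definition is a slip, as the paper's own inclusion $\calC(R) \subset \widetilde{\calL}$ and the failure of the lemma for $\calC(R)$ both show) --- agree with the paper's implicit usage, so the two proofs coincide.
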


\begin{proof}
For $a >0$, it is enough to show that $\{\alpha-\alpha' \mid \alpha,\alpha' \in [0,a]^d\} \subset [-a,a]^d$ and this is obvious.  
\end{proof}

Before moving to another lemma, we note that using $\bar{\beta}_1,\cdots,\bar{\beta_t}$ each character $\chi\in\rmX(G)$ is described as 
$\chi=\sum_{i=1}^tc_i\bar{\beta}_i$. 
It is known that a module of covariants $M_\chi$ is a rank one MCM $R$-module 
if and only if for some permutation $i_1,\cdots,i_t$ of $1,\cdots,t$, $0 \leq c_{i_{j+1}} - c_{i_j} \leq r-1$ holds for $1 \leq j \leq t-1$ (see \cite[Section~2]{Bru}). 
Here, $(c_1,\cdots,c_t) \in \ZZ^t$ can be identified with 
$(c_1-c_t,\cdots,c_{t-1}-c_t) \in \Cl(R)$ via the relation $\bar{\beta}_t=-(\bar{\beta}_1+\cdots+\bar{\beta}_{t-1})$. 

\begin{lemma}
\label{MCM_lem2}
For any $\chi \in \widetilde{\calL}$, $M_\chi$ is a rank one MCM $R$-module. 
\end{lemma}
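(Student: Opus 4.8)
The plan is to deduce the statement from the combinatorial criterion for rank one maximal Cohen--Macaulay modules of covariants recalled immediately before the lemma (see \cite[Section~2]{Bru}), after rewriting $\chi$ in the redundant coordinates $\chi=\sum_{i=1}^t c_i\bar{\beta}_i$. First I would fix $\chi\in\widetilde{\calL}$ and exploit the relation $\bar{\beta}_t=-(\bar{\beta}_1+\cdots+\bar{\beta}_{t-1})$: since adding a common constant to every $c_i$ leaves $\chi$ unchanged and alters neither the isomorphism class of $M_\chi$ nor any difference $c_i-c_j$, I am free to normalize the lift so that $c_t=0$. With this normalization the relevant $t$-tuple is $(c_1,\dots,c_{t-1},0)$, and the defining inequalities of $\widetilde{\calL}$ (which is the box $\{-(r-1)\le c_i\le r-1,\ 1\le i\le t-1\}$, so that $\calC(R)\subset\widetilde{\calL}$) give $c_i\in[-(r-1),r-1]$ for $1\le i\le t-1$, while the appended entry $c_t=0$ trivially lies in the same interval.

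By the recalled criterion, $M_\chi$ is rank one MCM exactly when, after reordering the entries so that $c_{i_1}\le c_{i_2}\le\cdots\le c_{i_t}$, every consecutive difference satisfies $c_{i_{j+1}}-c_{i_j}\le r-1$ (the inequality $0\le c_{i_{j+1}}-c_{i_j}$ being automatic from the sorting). Hence it suffices to bound the consecutive gaps of the sorted multiset $\{c_1,\dots,c_{t-1},0\}$. The key point is that $0$ itself occurs among these entries, so it can never lie strictly between two consecutive sorted values; therefore for consecutive $c_{i_j}<c_{i_{j+1}}$ either both entries are $\le 0$ or both are $\ge 0$. In the former case $-(r-1)\le c_{i_j}<c_{i_{j+1}}\le 0$, and in the latter $0\le c_{i_j}<c_{i_{j+1}}\le r-1$; in either case the gap is at most $r-1$. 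This verifies the criterion and proves the lemma.

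I do not anticipate a serious obstacle here: the argument is the external MCM criterion together with an elementary ``separation by $0$'' observation. The only genuine care needed is the bookkeeping between the $(t-1)$-coordinate description of $\Cl(R)$ and the redundant $t$-coordinate description used by the criterion, and in particular the choice of the normalized lift with $c_t=0$. This normalization is precisely what forces $0$ to appear among the sorted values, which is the mechanism that caps each consecutive gap by $r-1$; without it one would only control the coordinates $c_1,\dots,c_{t-1}$ and could not rule out a single jump of size up to $2(r-1)$ across the origin.
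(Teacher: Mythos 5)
Your proof is correct and follows essentially the same route as the paper: the authors also invoke the criterion from \cite[Section~2]{Bru}, pass to the coordinates $x_i=c_i-c_t$ (your normalization $c_t=0$), sort them, and insert the index $t$ (with value $0$) into the sorted sequence so that every consecutive gap lies within either $[-(r-1),0]$ or $[0,r-1]$. Your ``separation by $0$'' observation is precisely the mechanism in their case analysis, so there is nothing to add.
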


\begin{proof}
We may write $\chi=\sum_{i=1}^tc_i\bar{\beta}_i$ as $\chi=(x_1,\cdots,x_{t-1})$ 
with $x_i-x_j=c_i-c_j$ for any $1 \leq i \not= j \leq t-1$, and $x_i = c_i-c_t$ for any $1 \leq i \leq t-1$. 
For any $\chi\in\widetilde{\calL}$, we have that $-(r-1) \leq x_i \leq r-1$ for all $i$, 
and there exist a certain permutation $i_1,\cdots,i_{t-1}$ of $1,\cdots,t-1$ and $0 \leq \ell \leq t-1$ such that 
$$-(r-1) \leq x_{i_1} \leq x_{i_2} \leq \cdots \leq x_{i_\ell} \leq 0 \leq x_{i_{\ell+1}} \leq \cdots \leq x_{i_{t-1}} \leq r-1.$$ 
Consider the sequence $i_1,\cdots,i_\ell,t,i_{\ell+1},\cdots,i_{t-1}$ of $1,\cdots,t$. 
\begin{itemize}
\setlength{\parskip}{0pt} 
\setlength{\itemsep}{3pt}
\item For each $1 \leq j \leq \ell-1$, since $0 \leq x_{i_{j+1}}-x_{i_j} \leq -x_{i_j} \leq r-1$ 
and $x_{i_{j+1}}-x_{i_j}=c_{i_{j+1}}-c_{i_j}$, we have $0 \leq c_{i_{j+1}}-c_{i_j} \leq r-1$. 
\item For each $\ell+1 \leq j \leq t-1$, since $0 \leq x_{i_{j+1}}-x_{i_j} \leq x_{i_{j+1}} \leq r-1$ 
and $x_{i_{j+1}}-x_{i_j}=c_{i_{j+1}}-c_{i_j}$, we have $0 \leq c_{i_{j+1}}-c_{i_j} \leq r-1$. 
\item We see that $0 \leq -x_{i_\ell}=c_t-c_{i_\ell} \leq r-1$ and $0 \leq x_{i_{\ell+1}}=c_{i_{\ell+1}}-c_t \leq r-1$. 
\end{itemize}
Therefore, $\chi=\sum_{i=1}^t c_i\bar{\beta}_i$ satisfies the condition that $(c_1,\cdots,c_t)$ corresponds to a rank one MCM $R$-module. 
\end{proof}

In this manner, we can obtain an NCCR $\End_R(M_\calL)$ of Segre products of polynomial rings. 
Using this module $M_\calL$, we also obtain other modules giving NCCRs as follows. 
In the next section, we will give essentially different NCCRs using ``mutation". 

\begin{corollary}
Let the notation be the same as Theorem~\ref{NCCR_Segre}. 
For each divisorial ideal $I$ of $R$, $(M_\calL\otimes_RI)^{**}$ gives an NCCR of $R$. 
\end{corollary}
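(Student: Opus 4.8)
The plan is to show that tensoring the NCCR module $M_\calL$ by a fixed divisorial ideal $I$ and reflexivizing preserves the NCCR property. The key observation is that for a divisorial ideal $I$ of $R$, the functor $(-\otimes_R I)^{**}$ on reflexive $R$-modules is, up to the equivalence $(-)^G\colon\refl(G,S)\to\refl(R)$, nothing but a \emph{shift of characters}: if $I$ corresponds to the character $\eta\in\rmX(G)$ (so that $I=T(\eta)=M_{-\eta}$ in the notation above), then $(M_\chi\otimes_R I)^{**}\cong M_{\chi-\eta}$ for each $\chi$. Consequently $(M_\calL\otimes_R I)^{**}\cong M_{\calL-\eta}=\bigoplus_{\chi\in\calL}M_{\chi-\eta}$, and its endomorphism ring is
\[
\End_R\bigl((M_\calL\otimes_R I)^{**}\bigr)\cong\End_R(M_{\calL-\eta})\cong\End_\calA(P_{\calL-\eta})=\Lambda_{\calL-\eta}.
\]
So the whole statement reduces to the claim that translating the finite set $\calL\subset\rmX(G)$ by a fixed lattice vector $-\eta$ does not affect either the finiteness of the global dimension or the maximal Cohen--Macaulay property.

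First I would record the translation-invariance of the module-of-covariants formalism carefully. The isomorphism $(M_\chi\otimes_R I)^{**}\cong M_{\chi+[I]}$ (for the appropriate sign convention on the class $[I]\in\Cl(R)\cong\rmX(G)$) follows because reflexive rank-one modules over the normal domain $R$ form a group under $(-\otimes_R-)^{**}$ isomorphic to $\Cl(R)$, and $M_\chi$ represents the class $\chi$; this is exactly the additive structure of the class group transported through the equivalence $(-)^G$. Granting this, I would verify the two defining conditions of an NCCR for $\Lambda_{\calL-\eta}$. For the global dimension, I note that the entire argument in the proof of Theorem~\ref{NCCR_Segre} is translation-covariant: Lemma~\ref{key_lem1} and Lemma~\ref{key_lem2} are stated for arbitrary finite subsets of $\rmX(G)$, and the separating one-parameter subgroups $\lambda$ together with the Koszul/covariant resolutions $C_{\calL,\lambda,\chi}$ depend only on the \emph{differences} $\chi-\mu$ for $\mu\in\calL$ and on the weights $\beta_s$, all of which are unchanged under the simultaneous shift $\calL\mapsto\calL-\eta$, $\chi\mapsto\chi-\eta$. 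Hence $\gldim\Lambda_{\calL-\eta}=\gldim\Lambda_\calL<\infty$ by the identical inductive argument applied to the translated set.

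For the maximal Cohen--Macaulay condition, the cleanest route is to invoke the Gorenstein reduction in Remark~\ref{rem_NCCR}(1): since $R$ is a Gorenstein normal domain, an NCCR is characterized by finite global dimension together with $\End_R(M_{\calL-\eta})$ being MCM. The MCM property for $\End_R(M_{\calL-\eta})\cong\bigoplus_{\chi,\chi'\in\calL}\Hom_R(M_{\chi-\eta},M_{\chi'-\eta})\cong\bigoplus_{\chi,\chi'}M_{\chi'-\chi}$ depends only on the difference set $\{\chi'-\chi\mid\chi,\chi'\in\calL\}$, which is again unaffected by the translation. Thus by Lemma~\ref{MCM_lem1} these differences lie in $\widetilde{\calL}$, and by Lemma~\ref{MCM_lem2} each corresponding $M_{\chi'-\chi}$ is a rank-one MCM $R$-module, so their finite direct sum $\End_R(M_{\calL-\eta})$ is MCM exactly as before.

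The main obstacle, and the one point deserving genuine care, is justifying the identification $(M_\chi\otimes_R I)^{**}\cong M_{\chi+[I]}$ compatibly across all $\chi$ simultaneously, i.e.\ that $(-\otimes_R I)^{**}$ really acts as a single global character shift rather than merely permuting isomorphism classes. I expect this to follow from the fact that $(-)^G\colon\refl(G,S)\to\refl(R)$ is a monoidal-type equivalence compatible with the $\Cl(R)$-grading, so that tensoring-and-reflexivizing on the $R$-side corresponds to tensoring with the invertible graded piece $V_{[I]}\otimes_\kk S$ on the $(G,S)$-side, which sends $P_\chi$ to $P_{\chi+[I]}$ on the nose. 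Once this compatibility is in place, everything else is the formal transport of the proof of Theorem~\ref{NCCR_Segre} under translation, and no new estimates are required.
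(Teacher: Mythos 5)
Your proposal is correct, but it takes a noticeably heavier route than the paper. The paper's entire proof is the single isomorphism $\End_R((M_\calL\otimes_RI)^{**})\cong\End_R(M_\calL)$, which is a completely general fact about reflexive modules over a normal domain: $(M\otimes_RI)^{**}$ is again reflexive, and both endomorphism rings are reflexive $R$-modules that agree in codimension one (where the divisorial ideal $I$ becomes invertible), hence are isomorphic as $R$-algebras. Since the NCCR conditions are properties of this algebra --- finite global dimension as a ring, MCM-ness as an $R$-module --- the conclusion is immediate, with no reference to the toric or equivariant structure. You instead work inside the character formalism: you identify $(M_\calL\otimes_RI)^{**}$ with $M_{\calL-\eta}$ via the class-group law on divisorial ideals, observe that every Hom-piece $\Hom_R(M_{\chi-\eta},M_{\chi'-\eta})\cong M_{\chi'-\chi}$ depends only on differences of characters, and then re-run the inductive global-dimension argument of Theorem~\ref{NCCR_Segre} for the translated set $\calL-\eta$. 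All of this is valid (the separation conditions and the Koszul complexes of Lemma~\ref{key_lem2} are indeed translation-covariant), but two remarks are in order. First, once you know that the Hom-pieces together with their composition maps depend only on differences, you already have a ring isomorphism $\Lambda_{\calL-\eta}\cong\Lambda_\calL$, so re-running the induction is redundant --- this is exactly the point at which your argument collapses back into the paper's one-liner. Second, the ``main obstacle'' you flag (that the twist be a single global character shift rather than a permutation of isomorphism classes) is a non-issue for this statement: the isomorphism class of a finite direct sum is determined summand-wise, so the summand-by-summand isomorphisms $(M_\chi\otimes_RI)^{**}\cong M_{\chi-\eta}$ suffice. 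What your route buys that the paper's does not: it exhibits the twisted module explicitly as the splitting NCCR attached to the translated character set $\calL-\eta$, which is finer information than the bare statement and connects naturally to the mutation picture of Section~\ref{sec_mutation}; what the paper's route buys is brevity and generality, since it applies verbatim to any reflexive module, not just direct sums of divisorial ideals.
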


\begin{proof}
This follows from the fact $\End_R((M_\calL\otimes_RI)^{**})\cong\End_R(M_\calL)$. 
\end{proof}

\begin{example}
Suppose that $R$ is the Segre product of polynomial rings with $r=3, t=3$, 
in which case we have that $\Cl(R)\cong\ZZ^2$. 
In the following figure, each dot represents a divisorial ideal that is an MCM $R$-module, 
and the ones contained in the gray region correspond to conic classes in particular. 
Furthermore, the red ones correspond to elements in $\calL$, especially the red $\times$ represents the origin $(0,0)$. 
Therefore, these gives an NCCR of $R$. 

\begin{center}
{\scalebox{0.38}{
\begin{tikzpicture}
\coordinate (00) at (0,0); \coordinate (10) at (1,0); \coordinate (01) at (0,1); \coordinate (-10) at (-1,0); \coordinate (0-1) at (0,-1);  
\coordinate (11) at (1,1); \coordinate (-11) at (-1,1); \coordinate (-1-1) at (-1,-1); \coordinate (1-1) at (1,-1);   
\coordinate (20) at (2,0); \coordinate (21) at (2,1); \coordinate (22) at (2,2); \coordinate (02) at (0,2); 
\coordinate (12) at (1,2); \coordinate (-20) at (-2,0); \coordinate (-2-1) at (-2,-1); \coordinate (-2-2) at (-2,-2); 
\coordinate (0-2) at (0,-2); \coordinate (-1-2) at (-1,-2); 
\coordinate (31) at (3,1); \coordinate (32) at (3,2); \coordinate (42) at (4,2);   
\coordinate (13) at (1,3); \coordinate (23) at (2,3); \coordinate (24) at (2,4); 
\coordinate (2-1) at (2,-1); \coordinate (2-2) at (2,-2); \coordinate (1-2) at (1,-2); 
\coordinate (-3-1) at (-3,-1); \coordinate (-3-2) at (-3,-2); \coordinate (-4-2) at (-4,-2);   
\coordinate (-1-3) at (-1,-3); \coordinate (-2-3) at (-2,-3); \coordinate (-2-4) at (-2,-4); 
\coordinate (-21) at (-2,1); \coordinate (-22) at (-2,2); \coordinate (-12) at (-1,2); 

\filldraw [color=lightgray] (2,0)--(2,2)--(0,2)--(-2,0)--(-2,-2)--(0,-2)--(2,0) ; 
\draw [step=1, gray] (-4.6,-4.6) grid (4.6,4.6);
\draw [color=gray] (2,0)--(2,2)--(0,2)--(-2,0)--(-2,-2)--(0,-2)--(2,0) ; 

\draw [red,line width=0.05cm, fill=red] (00) circle [radius=0.12] ; 
\draw [red,line width=0.05cm] (-0.23,-0.23)--(0.23,0.23); \draw [red,line width=0.05cm] (0.23,-0.23)--(-0.23,0.23); 
\draw [red,line width=0.05cm, fill=red] (10) circle [radius=0.12] ; \draw [red,line width=0.05cm, fill=red] (01) circle [radius=0.12] ; 
\draw [red,line width=0.05cm, fill=red] (11) circle [radius=0.12] ; \draw [red,line width=0.05cm, fill=red] (20) circle [radius=0.12] ; 
\draw [red,line width=0.05cm, fill=red] (21) circle [radius=0.12] ; \draw [red,line width=0.05cm, fill=red] (22) circle [radius=0.12] ; 
\draw [red,line width=0.05cm, fill=red] (02) circle [radius=0.12] ; \draw [red,line width=0.05cm, fill=red] (12) circle [radius=0.12] ; 

\draw [line width=0.05cm, fill=black] (-10) circle [radius=0.12] ; \draw [line width=0.05cm, fill=black] (0-1) circle [radius=0.12] ; 
\draw [line width=0.05cm, fill=black] (-11) circle [radius=0.12] ; \draw [line width=0.05cm, fill=black] (-1-1) circle [radius=0.12] ;
\draw [line width=0.05cm, fill=black] (1-1) circle [radius=0.12] ; \draw [line width=0.05cm, fill=black] (-20) circle [radius=0.12] ; 
\draw [line width=0.05cm, fill=black] (-2-1) circle [radius=0.12] ; \draw [line width=0.05cm, fill=black] (-2-2) circle [radius=0.12] ; 
\draw [line width=0.05cm, fill=black] (0-2) circle [radius=0.12] ; \draw [line width=0.05cm, fill=black] (-1-2) circle [radius=0.12] ; 
\draw [line width=0.05cm, fill=black] (31) circle [radius=0.12] ; \draw [line width=0.05cm, fill=black] (32) circle [radius=0.12] ; 
\draw [line width=0.05cm, fill=black] (42) circle [radius=0.12] ; \draw [line width=0.05cm, fill=black] (13) circle [radius=0.12] ; 
\draw [line width=0.05cm, fill=black] (23) circle [radius=0.12] ; \draw [line width=0.05cm, fill=black] (24) circle [radius=0.12] ; 
\draw [line width=0.05cm, fill=black] (2-1) circle [radius=0.12] ; \draw [line width=0.05cm, fill=black] (2-2) circle [radius=0.12] ; 
\draw [line width=0.05cm, fill=black] (1-2) circle [radius=0.12] ; 
\draw [line width=0.05cm, fill=black] (-3-1) circle [radius=0.12] ; \draw [line width=0.05cm, fill=black] (-3-2) circle [radius=0.12] ; 
\draw [line width=0.05cm, fill=black] (-4-2) circle [radius=0.12] ; \draw [line width=0.05cm, fill=black] (-1-3) circle [radius=0.12] ; 
\draw [line width=0.05cm, fill=black] (-2-3) circle [radius=0.12] ; \draw [line width=0.05cm, fill=black] (-2-4) circle [radius=0.12] ; 
\draw [line width=0.05cm, fill=black] (-21) circle [radius=0.12] ; \draw [line width=0.05cm, fill=black] (-22) circle [radius=0.12] ; 
\draw [line width=0.05cm, fill=black] (-12) circle [radius=0.12] ; 
\end{tikzpicture} }}
\end{center}
\end{example}

\section{Mutations of NCCRs for toric rings}
\label{sec_mutation}

In the previous section, we construct an NCCR of the Segre product of polynomial rings $R=S_1\# S_2\#\cdots\# S_t$. 
In this section, we introduce the operation called \emph{mutation}, and we can obtain another module giving an NCCR from a given one via this operation. 
In particular, we will define the mutation for $(\calL,\chi,\lambda)$ where $\calL\subset\rmX(G)$ is a finite set giving an NCCR, and $\chi\in\calL$ 
is a character separated from $\calL{\setminus}\{\chi\}$ by a certain one parameter subgroup $\lambda$. 
Although, our main interest lies in the Segre product of polynomial rings, the following argument is valid for any complete local Gorenstein toric ring 
admitting NCCRs. 
Thus, $R$ denotes such a toric ring in the rest of this section. We remark that Krull-Schmidt theorem holds in our situation. 

We first consider an $R$-module $M=\bigoplus_{i\in I}M_i$ where $I=\{1, \cdots, n\}$. 
We assume that $M$ is \emph{basic}, that is, $M_i$'s are mutually non-isomorphic. 
Following \cite[Section~6]{IW1}, we define the mutation of $M$ at $i\in I$.
For each $i\in I$, we set $M_{I\setminus\{i\}}=\bigoplus_{j\in I\setminus\{i\}}M_j$. 
We say that a morphism $\varphi:N\rightarrow M_i$ is a \emph{right $(\add_R M_{I\setminus\{i\}})$-approximation} of $M_i$ if 
$N\in\add_R M_{I\setminus\{i\}}$ and 
$$\Hom_R(M_{I\setminus\{i\}}, N)\xrightarrow{\varphi\cdot}\Hom_R(M_{I\setminus\{i\}}, M_i)$$
is surjective. In addition, we say that $\varphi$ is \emph{minimal} if $\phi\in\End_R(N)$ satisfies $\varphi{\cdot}\phi=\varphi$, then $\phi$ is an automorphism, 
equivalently if non-zero direct summands of $N$ are not mapped to zero via $\varphi$. 
Since $(\add_R M_{I\setminus\{i\}})$ is contravariantly finite and $R$ is complete, 
a minimal right $(\add_R M_{I\setminus\{i\}})$-approximation $\varphi$ exists and is unique up to isomorphism. 
We then define the \emph{right mutation} $\mu^+_i$ of $M$ at $i$ as $$\mu^+_i(M)\coloneqq M_{I\setminus\{i\}}\oplus \Ker\varphi.$$ 
Also, we define the \emph{left mutation} $\mu^-_i$ of $M$ at $i\in I$ as $\mu^-_i(M)\coloneqq (\mu^+_i(M^*))^*$ where $(-)^*$ is the $R$-dual. 

Here, we collect some properties of these mutations. 

\begin{proposition}[{see \cite[Proposition~6.5, Theorem~6.8, and 6.10]{IW1}}]
\label{basic_prop_mutation}
Let the notation be the same as above. Then, we have the followings. 
\begin{enumerate}[\rm (1)]
\setlength{\parskip}{0pt} 
\setlength{\itemsep}{3pt}
\item We have that $\mu^-_i(\mu^+_i(M))=M$ and $\mu^+_i(\mu^-_i(M))=M$. 
\item If $M$ gives an NCCR of $R$, then so do $\mu^+_i(M)$ and $\mu^-_i(M)$. 
\item $\End_R(M)$, $\End_R(\mu^+_i(M))$ and $\End_R(\mu^-_i(M))$ are all derived equivalent. 
\end{enumerate}
\end{proposition}

Then, we apply the above mutation to the $R$-module $M_\calL=\bigoplus_{\eta\in\calL}M_\eta$ where $\calL$ is a finite subset of $\rmX(G)$ 
which is not necessarily equal to the one used in Theorem~\ref{NCCR_Segre}. 
In particular, by combining several mutations, we introduce the mutation of $M_\calL$ at $\chi\in\calL$ with respect to 
a certain one parameter subgroup $\lambda\in\rmY(G)_\RR$. 
To define this, the acyclic complex appearing in Lemma~\ref{key_lem2} is the main ingredient. 
We first set $P_{\calL\setminus\chi}=\bigoplus_{\eta\in\calL\setminus\{\chi\}}P_\eta$, 
and consider the complex $C_{\lambda,\chi}$ given in (\ref{chi_sequence}). 
Let $C_{\calL\setminus\chi,\lambda,\chi}$ be the complex obtained by applying $\Hom_\calA(P_{\calL\setminus\chi},-)$ to $C_{\lambda,\chi}$. 

Here, we consider a one parameter subgroup $\lambda\in\rmY(G)_\RR$ satisfying the following conditions: 
\begin{itemize}
\setlength{\parskip}{0pt} 
\setlength{\itemsep}{3pt}
\item [$(\rmY 1)$] $\lambda$ separates $\chi\in\rmX(G)$ from a finite set $\calM\subset\rmX(G)$. 
\item [$(\rmY 2)$] For all $p=1,\cdots,d_{\lambda-1}$, one has $\chi+\beta_{i_1}+\cdots+\beta_{i_p}\in\calM$ 
where $\{i_1, \cdots, i_p\}\subset \{1, \cdots, n\}$, $i_j\neq i_{j^\prime}$ if $j\neq j^\prime$, and $\langle\lambda,\beta_{i_j}\rangle>0$. 
\end{itemize}
Let $\rmY_{\calM,\chi}$ be the set of one parameter subgroups satisfying the above conditions $(\rmY 1)$ and $(\rmY 2)$. 
We remark that sometimes such a one parameter subgroup $\lambda\in\rmY_{\calM,\chi}$ does not exist for a given $\chi$. 
Using these, we have the following theorem. 

\begin{theorem}
\label{mutation_NCCR} 
Let $R$ be a complete local Gorenstein toric ring and 
$M_\calL=\bigoplus_{\eta\in\calL}M_\eta$ be a module giving a splitting NCCR of $R$. We pick a character $\chi\in\calL$. 
If there exists a one parameter subgroup $\lambda\in\rmY_{\calL\setminus\chi,\chi}$, then $M_{\calL{\setminus}\chi}\oplus M_\nu$ also gives a splitting NCCR of $R$ where 
$$\nu=\chi+\beta_{i_1}+\cdots+\beta_{i_{d_\lambda}}$$ 
with $i_j\neq i_{j^\prime}$ if $j\neq j^\prime$, and $\langle\lambda,\beta_{i_j}\rangle>0$ for all $j=1,\cdots,d_\lambda$. 
Furthermore, $\End_R(M_\calL)$ and $\End_R(M_{\calL{\setminus}\chi}\oplus M_\nu)$ are derived equivalent. 
\end{theorem}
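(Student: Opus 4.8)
The plan is to exhibit $M_{\calL\setminus\chi}\oplus M_\nu$ as a mutation of $M_\calL$ at the summand $M_\chi$ in the sense of \cite[Section~6]{IW1}, so that the two assertions become instances of \cite[Theorem~6.10]{IW1} (a mutation of a module giving an NCCR again gives an NCCR) and \cite[Theorem~6.8]{IW1} (the endomorphism rings of a module and of its mutation are derived equivalent). Since $R$ is complete local Gorenstein, Krull--Schmidt holds and, by Remark~\ref{rem_NCCR}(1), the NCCR property amounts to finite global dimension together with the MCM condition; both are packaged into the IW formalism once the mutation is identified. The whole problem therefore reduces to producing the minimal right $(\add_R M_{\calL\setminus\chi})$-approximation of $M_\chi$ and identifying the resulting new summand with $M_\nu$.

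The tool is the acyclic complex of Lemma~\ref{key_lem2} applied to $\calL\setminus\chi$. Condition $(\rmY 1)$ is exactly the hypothesis that $\chi$ is separated from $\calL\setminus\chi$ by $\lambda$, so $C_{\calL\setminus\chi,\lambda,\chi}$ is acyclic; its degree-zero term is $P_{\calL\setminus\chi,\chi}$ and for $1\le p\le d_\lambda$ its degree $-p$ term is a direct sum of $P_{\calL\setminus\chi,\mu}$ with $\mu=\chi+\beta_{i_1}+\cdots+\beta_{i_p}$ and $\langle\lambda,\beta_{i_j}\rangle>0$. By condition $(\rmY 2)$, all such $\mu$ with $p\le d_\lambda-1$ lie in $\calL\setminus\chi$, so the corresponding terms are projective $\Lambda_{\calL\setminus\chi}$-modules, while the top term, coming from the one-dimensional space $\wedge^{d_\lambda}K_\lambda$, is the single module $P_{\calL\setminus\chi,\nu}$ with $\nu=\chi+\beta_{i_1}+\cdots+\beta_{i_{d_\lambda}}$. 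Transporting through the equivalence $(-)^G$, I read this as an exact sequence of $R$-modules
\begin{equation*}
0\longrightarrow M_\nu\longrightarrow C_{d_\lambda-1}\longrightarrow\cdots\longrightarrow C_1\longrightarrow M_\chi\longrightarrow 0,\qquad C_p\in\add_R M_{\calL\setminus\chi}\ (1\le p\le d_\lambda-1),
\end{equation*}
whose surjectivity onto $M_\chi$ is built into the acyclicity. This is the exchange sequence governing the mutation; when $d_\lambda=2$ it is a single three-term sequence and $C_1\to M_\chi$ is directly the minimal approximation with kernel $M_\nu$.

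For general $d_\lambda$, the mutation is a composite of elementary IW mutations, one for each layer of the complex: each step consumes one more term of $C_{\calL\setminus\chi,\lambda,\chi}$, and the net effect replaces $M_\chi$ by $M_\nu$ while leaving the summands indexed by $\calL\setminus\chi$ untouched. Since each elementary mutation preserves both the NCCR property and the derived equivalence class by \cite[Theorem~6.10]{IW1} and \cite[Theorem~6.8]{IW1}, so does the composite, giving that $M_{\calL\setminus\chi}\oplus M_\nu$ is again a splitting NCCR (splitting because $M_\nu$ is a rank one reflexive module of covariants) whose endomorphism ring is derived equivalent to $\End_R(M_\calL)$. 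The main obstacle is precisely this bookkeeping: verifying minimality of the approximations and showing that the iterated kernels recombine so that only $M_\chi$ is exchanged and the final new indecomposable summand is exactly $M_\nu$ --- here Krull--Schmidt, together with the strict inequalities from $(\rmY 1)$ and the containment from $(\rmY 2)$, is what prevents any summand of $M_{\calL\setminus\chi}$ from being split off or altered along the way.
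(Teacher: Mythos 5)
Your proposal is essentially the paper's own proof: you take the acyclic complex $C_{\calL\setminus\chi,\lambda,\chi}$ of Lemma~\ref{key_lem2} (acyclic by $(\rmY 1)$, with all intermediate terms projective over $\Lambda_{\calL\setminus\chi}$ by $(\rmY 2)$), convert it through $(-)^G$ into a chain of minimal right $\add_R(M_{\calL\setminus\chi})$-approximations, perform one Iyama--Wemyss mutation per layer, identify the last kernel with $M_\nu$ via $\dim_\kk(\chi\otimes_\kk\wedge^{d_\lambda}K_\lambda)=1$, and quote \cite[Theorems~6.8 and 6.10]{IW1} for the NCCR property and the derived equivalence. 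This matches the paper's argument step for step, including the observation that the intermediate kernels are what get exchanged (the paper notes they are not modules of covariants in general).

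One assertion should be corrected, though it turns out to be inessential. The transported sequence is in general \emph{not} exact at $M_\chi$: the complex $C_{\lambda,\chi}$ is the Koszul resolution of $\chi\otimes_\kk S(W/K_\lambda)$, so after applying the exact functor $(-)^G$ the cokernel of $C_1\to M_\chi$ is $(\chi\otimes_\kk S(W/K_\lambda))^G$, which vanishes when $0\in\calL\setminus\{\chi\}$ (separation then kills it, as in the paper's remark before Lemma~\ref{key_lem2}) but need not vanish otherwise, and the theorem does not assume that $R$ is a direct summand of $M_\calL$. What acyclicity of $C_{\calL\setminus\chi,\lambda,\chi}$ actually gives is surjectivity after applying $\Hom_R(M_{\calL\setminus\chi},-)$, i.e., precisely the approximation property; a right $(\add_R M_{\calL\setminus\chi})$-approximation in the sense used here is not required to be surjective. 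This costs you nothing: the mutation $\mu^+$ uses only the approximation property and the kernel of the actual map, and the identifications $\Ker(C_p\to C_{p-1})=\Image(C_{p+1}\to C_p)$ still hold because the Koszul complex is exact in positive homological degrees and $(-)^G$ is exact. So your iteration, the final identification of the new summand with $M_\nu$, and both conclusions of the theorem go through exactly as in the paper.
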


\begin{proof}
Consider $C_{\lambda,\chi}$ given in (\ref{chi_sequence}). 
If $\lambda\in\rmY_{\calL\setminus\chi,\chi}$, then we see that $C_{\calL{\setminus}\chi,\lambda,\chi}$ is acyclic by the condition $(\rmY 1)$ and Lemma~\ref{key_lem2}. 
(Remark that the following arguments do not depend on a choice of $\lambda\in\rmY_{\calL\setminus\chi,\chi}$ by Lemma~\ref{mutation_lem} shown below.) 
Thus, we see that $\Hom_\calA(P_{\calL{\setminus}\chi},\delta_1)$ is surjective, and $(\chi\otimes_\kk K_\lambda)\otimes_\kk S\in P_{\calL{\setminus}\chi}$ by the condition $(\rmY 2)$. 
Since the functor $(-)^G$ gives an equivalence $\refl(G,S)\rightarrow\refl(R)$, we see that $\delta_1^G:(\chi\otimes_\kk K_\lambda\otimes_\kk S)^G\rightarrow(\chi\otimes_\kk S)^G=M_\chi$ is a right $\add_R(M_{\calL{\setminus}\chi})$-approximation of $M_\chi$. 
In addition, since the complex $C_{\lambda,\chi}$ is constructed from the Koszul resolution, 
there is no component of $(\chi\otimes_\kk K_\lambda)\otimes_\kk S$ that maps to zero via $\delta_1$, thus $\delta_1^G$ is miminal. 
Therefore, $M_{\calL{\setminus}\chi}\oplus\Ker\delta_1^G$ is the right mutation of $M_\calL$ at $M_\chi$. 
We remark that $\Ker\delta_1^G$ is not a module of covariants in general. 
Similarly, using the acyclicness of $C_{\calL{\setminus}\chi,\lambda,\chi}$ and the condition $(\rmY 2)$, we also have that $\delta_2^G$ is a right minimal 
$\add_R(M_{\calL{\setminus}\chi})$-approximation of $\Ker\delta_1^G$, 
and hence $M_{\calL{\setminus}\chi}\oplus\Ker\delta_2^G$ is the right mutation of $M_{\calL{\setminus}\chi}\oplus\Ker\delta_1^G$ at $\Ker\delta_1^G$. 
Repeating these processes, we finally arrive at 
$$M_{\calL{\setminus}\chi}\oplus\Ker\delta_{d_\lambda}^G=M_{\calL{\setminus}\chi}\oplus M_\nu,$$  
and this also gives an NCCR by Proposition~\ref{basic_prop_mutation}(2). 
Furthermore, this is splitting because $\dim_\kk(\chi\otimes_\kk\wedge^{d_\lambda}K_\lambda)=1$. 
The derived equivalence follows from Proposition~\ref{basic_prop_mutation}(3). 
\end{proof}

We denote $\mu_{\chi,\lambda}^+(M_\calL)\coloneqq M_{\calL{\setminus}\chi}\oplus M_\nu$, 
and call this the \emph{right mutation} of $M_\calL$ at $\chi$ with respect to $\lambda\in\rmY_{\calL\setminus\chi,\chi}$. 
We also define the \emph{left mutation} as $\mu_{\chi,\lambda}^-(M_\calL)=(\mu_{-\chi,-\lambda}^+(M_\calL^*))^*$. 

\begin{remark}
We remark that the mutations $\mu^\pm_{\chi,\lambda}$ are established by combining several mutations as shown in the proof of Theorem~\ref{mutation_NCCR}. 
In particular, if $d_\lambda>2$, then we have non-splitting NCCRs in the process of obtaining $\mu^\pm_{\chi,\lambda}(M_\calL)$ from $M_\calL$. 
This situation is quite different from the observation shown in \cite{Nak}, 
which discusses the mutations of splitting NCCRs for $3$-dimensional Gorenstein toric rings. 
\end{remark}

The next lemma asserts that these mutations do not depend on a choice of $\lambda\in\rmY_{\calL\setminus\chi,\chi}$. 

\begin{lemma}
\label{mutation_lem}
Let the notation be the same as above. 
For $\lambda,\lambda^\prime\in\rmY_{\calL\setminus\chi,\chi}$, we see that 
$\langle\lambda,\beta_{i_j}\rangle>0$ if and only if $\langle\lambda^\prime,\beta_{i_j}\rangle>0$. 
\end{lemma}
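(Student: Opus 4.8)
The plan is to show that the ``positive set'' $I_\lambda^+\coloneqq\{\,i\mid\langle\lambda,\beta_i\rangle>0\,\}$ admits a description depending only on the pair $(\chi,\calL)$ and not on the chosen one parameter subgroup; since the stated biconditional is exactly the equality $I_\lambda^+=I_{\lambda'}^+$, this is all that is needed. Writing $\calM\coloneqq\calL\setminus\{\chi\}$, the candidate intrinsic description is
$$S\coloneqq\{\,i\in\{1,\dots,n\}\mid\chi+\beta_i\in\calM\,\}.$$
First I would prove $I_\lambda^+=S$ for every $\lambda\in\rmY_{\calM,\chi}$; as the right-hand side involves no one parameter subgroup, applying this to both $\lambda$ and $\lambda'$ yields the lemma at once. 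I also record that, since $W=\bigoplus_i V_{\beta_i}$ decomposes into one-dimensional weight spaces, one has $d_\lambda=\dim_\kk K_\lambda=|I_\lambda^+|$, which I will use to compare dimensions.

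The inclusion $S\subseteq I_\lambda^+$ uses only $(\rmY 1)$: if $\chi+\beta_i\in\calM$, then the separation $\langle\lambda,\chi\rangle<\langle\lambda,\mu\rangle$ for all $\mu\in\calM$, taken at $\mu=\chi+\beta_i$, gives $\langle\lambda,\chi\rangle<\langle\lambda,\chi\rangle+\langle\lambda,\beta_i\rangle$, whence $\langle\lambda,\beta_i\rangle>0$ and $i\in I_\lambda^+$. The reverse inclusion $I_\lambda^+\subseteq S$ comes from $(\rmY 2)$ with $p=1$: for each single index $i$ with $\langle\lambda,\beta_i\rangle>0$ the singleton $\{i\}$ is an admissible choice, so $\chi+\beta_i\in\calM$, i.e.\ $i\in S$. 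This already establishes $I_\lambda^+=S$, and hence the conclusion, whenever the range $p=1,\dots,d_\lambda-1$ is nonempty, that is, whenever $d_\lambda\ge2$.

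The hard part will be the degenerate case $d_\lambda=1$, for then $(\rmY 2)$ is vacuous and the reverse inclusion is not available directly. I would dispose of it by a cardinality comparison built on the $(\rmY 1)$ inclusion above. If, say, $d_\lambda\ge2$ while $d_{\lambda'}=1$, then $|I_\lambda^+|=|S|\ge2$, yet $(\rmY 1)$ applied to $\lambda'$ forces $S\subseteq I_{\lambda'}^+$ and hence $|I_{\lambda'}^+|\ge2$, contradicting $d_{\lambda'}=1$; so the dimensions cannot differ in this way. If $d_\lambda=d_{\lambda'}=1$, then $S\subseteq I_\lambda^+=\{i_0\}$ and $S\subseteq I_{\lambda'}^+=\{i_0'\}$, so whenever $S\neq\emptyset$ we get $S=\{i_0\}=\{i_0'\}$ and thus $i_0=i_0'$. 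The only genuinely delicate situation is $S=\emptyset$ with both dimensions equal to $1$, which I would rule out from the combinatorics of the weights: in the Segre setting of Theorem~\ref{NCCR_Segre} each weight $\bar{\beta}_s$ occurs with multiplicity $r\ge2$, so $\langle\lambda,\beta_i\rangle>0$ pushes all $r$ copies of that weight to be positive and forces $d_\lambda\ge r\ge2$, excluding the degenerate case entirely.
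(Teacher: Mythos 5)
Your core argument is the paper's proof, phrased with a little more bookkeeping: the paper fixes $\beta_{i_j}$ with $\langle\lambda,\beta_{i_j}\rangle>0$, applies $(\rmY 2)$ for $\lambda$ with $p=1$ to conclude $\chi+\beta_{i_j}\in\calL\setminus\{\chi\}$, then applies the separation condition $(\rmY 1)$ for $\lambda^\prime$ to get $\langle\lambda^\prime,\beta_{i_j}\rangle>0$, and argues the converse symmetrically. These two steps are exactly your inclusions $I_\lambda^+\subseteq S$ and $S\subseteq I_{\lambda^\prime}^+$; your set $S$ just names the intermediate object explicitly.

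The genuine difference is your treatment of the degenerate case $d_\lambda=1$. You are right that the range $p=1,\cdots,d_\lambda-1$ in $(\rmY 2)$ is then empty, so the inclusion $I_\lambda^+\subseteq S$ cannot be extracted from it; the paper's proof invokes $(\rmY 2)$ with $p=1$ without comment, i.e., it tacitly assumes $d_\lambda\geq 2$ (and likewise $d_{\lambda^\prime}\geq 2$). Your cardinality argument correctly eliminates the mixed case $d_\lambda\geq 2$, $d_{\lambda^\prime}=1$, as well as the case $d_\lambda=d_{\lambda^\prime}=1$ with $S\neq\emptyset$, so on these points you are more careful than the published proof. The one caveat is your disposal of the last sub-case ($d_\lambda=d_{\lambda^\prime}=1$ and $S=\emptyset$): it uses that every weight occurs with multiplicity $r\geq 2$, which is a feature of the Segre products of Theorem~\ref{NCCR_Segre}, whereas Lemma~\ref{mutation_lem} is asserted (see the preamble of Section~\ref{sec_mutation}) for an arbitrary complete local Gorenstein toric ring admitting a splitting NCCR, where weights may have multiplicity one and that sub-case is not excluded. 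So, strictly speaking, your proof establishes the lemma in the setting where the paper actually applies it, but not in its full stated generality; the paper's own proof, however, is silent on the entire case $d_\lambda=1$, so your write-up is, if anything, more complete than the original, and more honest about where the hypotheses $(\rmY 1)$ and $(\rmY 2)$ are really used.
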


\begin{proof}
Suppose that $\beta_{i_j}$ satisfies $\langle\lambda,\beta_{i_j}\rangle>0$. 
Using the condition $(\rmY 2)$ for $\lambda$, we have that $\chi+\beta_{i_j}\in\calL{\setminus}\chi$. 
Since $\lambda^\prime$ separates $\chi$ from $\calL{\setminus}\chi$, we have that $\langle\lambda^\prime,\chi \rangle<\langle\lambda^\prime,\chi+\beta_{i_j} \rangle$, 
and hence $\langle\lambda^\prime,\beta_{i_j} \rangle>0$ holds. The converse is similar. 
\end{proof}

We note that $\mu_{\chi,\lambda}^+\neq\mu_{\chi,\lambda}^-$ in general. 
We also remark that even if $\lambda\in\rmY_{\calL\setminus\chi,\chi}$, there is a case where $-\lambda$ is not in $\rmY_{-(\calL\setminus\chi),-\chi}$
(the condition ($\rmY2$) does not hold in general), 
thus we can not define $\mu_{\chi,\lambda}^-(M_\calL)$ in such a situation. Also, we encounter the vice versa situation. 
If we can define both $\mu_{\chi,\lambda}^+$ and $\mu_{\chi,\lambda}^-$, then these are mutually inverse operations in the following sense. 

\begin{proposition}
\label{mutation_prop2}
Let the notation be the same as above. 
\begin{enumerate}[\rm (a)]
\setlength{\parskip}{0pt} 
\setlength{\itemsep}{3pt}
\item For a character $\chi\in\calL$, we assume that there exists a one parameter subgroup $\lambda_1\in Y_{\calL\setminus\chi,\chi}$. 
We consider the character $\nu$ of the form: 
$$\nu=\chi+\beta_{i_1}+\cdots+\beta_{i_{d}}$$
with $i_j\neq i_{j^\prime}$ if $j\neq j^\prime$, and $\langle\lambda_1,\beta_{i_j}\rangle>0$ for all $j=1,\cdots,d$ where $d\coloneqq d_{\lambda_1}$. 
If there exists a one parameter subgroup $-\lambda_2\in Y_{-(\calL\setminus\chi),-\nu}$, then we have 
$$\mu^-_{\nu,\lambda_2}(\mu_{\chi,\lambda_1}^+(M_\calL))=M_\calL.$$

\item For a character $\chi\in\calL$, we assume that there exists a one parameter subgroup $-\lambda_1\in Y_{-(\calL\setminus\chi),-\chi}$. 
We consider the character $\nu^\prime$ of the form: 
$$\nu^\prime=-\chi+\beta^\prime_{i_1}+\cdots+\beta^\prime_{i_{d^\prime}}$$
with $i_j\neq i_{j^\prime}$ if $j\neq j^\prime$, and $\langle -\lambda_1,\beta^\prime_{i_j}\rangle>0$ for all $j=1,\cdots,d^\prime$ where $d^\prime\coloneqq d_{-\lambda_1}$.
If there exists a one parameter subgroup $\lambda_2\in Y_{\calL\setminus\chi,\nu}$, then we have 
$$\mu^+_{\nu,\lambda_2}(\mu_{\chi,\lambda_1}^-(M_\calL))=M_\calL.$$
\end{enumerate}
\end{proposition}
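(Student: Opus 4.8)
The plan is to prove (a) by turning the claimed identity into a single statement about characters, and then to deduce (b) by applying (a) after the duality $(-)^*$. The abstract mutation theory of \cite[Section~6]{IW1} enters only to guarantee that every module produced along the way gives an NCCR and that the endomorphism rings are derived equivalent (which is already recorded in Theorem~\ref{mutation_NCCR}); the actual identity will be extracted from the explicit description of $\mu^{\pm}$ together with Theorem~\ref{mutation_NCCR} and Lemma~\ref{key_lem2}.

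Concretely, write $N\coloneqq\mu^+_{\chi,\lambda_1}(M_\calL)=M_{\calL\setminus\chi}\oplus M_\nu$ with $\nu=\chi+\sum_{i\in A}\beta_i$, where $A\coloneqq\{i\mid\langle\lambda_1,\beta_i\rangle>0\}$ and $d=d_{\lambda_1}=|A|$. By definition $\mu^-_{\nu,\lambda_2}(N)=(\mu^+_{-\nu,-\lambda_2}(N^*))^*$. Using the Hom-formula recorded before Lemma~\ref{key_lem1} with target $M_0=R$, one has $M_\eta^*\cong M_{-\eta}$ for every $\eta\in\rmX(G)$, so $N^*\cong M_{-(\calL\setminus\chi)}\oplus M_{-\nu}$, which again gives a splitting NCCR. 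Since $-\lambda_2\in\rmY_{-(\calL\setminus\chi),-\nu}$ by hypothesis, Theorem~\ref{mutation_NCCR} applies to $N^*$ at the character $-\nu$ and yields $\mu^+_{-\nu,-\lambda_2}(N^*)=M_{-(\calL\setminus\chi)}\oplus M_{\nu''}$ with $\nu''=-\nu+\sum_{i\in B}\beta_i$, where $B\coloneqq\{i\mid\langle-\lambda_2,\beta_i\rangle>0\}$. Dualizing once more and using $M_{-\eta}^*\cong M_\eta$, we get $\mu^-_{\nu,\lambda_2}(N)=M_{\calL\setminus\chi}\oplus M_{-\nu''}$. Thus both $M_\calL$ and $\mu^-_{\nu,\lambda_2}(N)$ share the summand $M_{\calL\setminus\chi}$, and (a) is equivalent to the single character identity $\nu''=-\chi$, i.e.\ to $\sum_{i\in B}\beta_i=\nu-\chi=\sum_{i\in A}\beta_i$.

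The heart of the matter is therefore to compare the weight sets $A$ and $B$ cut out by $\lambda_1$ and $-\lambda_2$. One inclusion is immediate: if $i\in A$, then $\nu-\beta_i=\chi+\sum_{j\in A\setminus\{i\}}\beta_j$ lies in $\calL\setminus\chi$ by condition $(\rmY 2)$ for $\lambda_1$, so $-\nu+\beta_i\in-(\calL\setminus\chi)$, and applying the separation $(\rmY 1)$ for $-\lambda_2$ to this point gives $\langle-\lambda_2,-\nu\rangle<\langle-\lambda_2,-\nu+\beta_i\rangle$, i.e.\ $\langle-\lambda_2,\beta_i\rangle>0$ and hence $i\in B$. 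Thus $A\subseteq B$ and $\nu''=-\chi+\sum_{i\in B\setminus A}\beta_i$, so it remains to show that the extra weights contribute nothing, $\sum_{i\in B\setminus A}\beta_i=0$. I expect this to be the main obstacle: the linear inequalities coming from $(\rmY 1)$ and $(\rmY 2)$ alone only force $|B\setminus A|\neq1$, and to rule out a non-trivial relation among the remaining $\beta_i$ (such as $\bar\beta_1+\bar\beta_2+\bar\beta_3=0$ in the Segre case) one has to bring in the genericity of the $G$-action together with the full strength of $(\rmY 1)$ and $(\rmY 2)$ for both $\lambda_1$ and $-\lambda_2$. This is precisely the two-sided refinement of Lemma~\ref{mutation_lem}, whose proof already exploits the same interplay between separation and condition $(\rmY 2)$. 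Once $\sum_{i\in B\setminus A}\beta_i=0$ (equivalently $K_{\lambda_1}$ and $K_{-\lambda_2}$ have equal total weight), we obtain $\nu''=-\chi$, whence $\mu^-_{\nu,\lambda_2}(\mu^+_{\chi,\lambda_1}(M_\calL))=M_{\calL\setminus\chi}\oplus M_\chi=M_\calL$; the derived equivalence of the endomorphism rings follows from Theorem~\ref{mutation_NCCR}.

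Finally, part (b) is obtained by running the same argument after applying $(-)^*$ throughout: the hypotheses $-\lambda_1\in\rmY_{-(\calL\setminus\chi),-\chi}$ and $\lambda_2\in\rmY_{\calL\setminus\chi,\nu'}$ play exactly the roles that $\lambda_1$ and $-\lambda_2$ played in (a), with $\chi$ and $-\chi$ interchanged, so the weight-matching step is identical and the same obstacle is the only non-formal point.
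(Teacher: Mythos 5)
Your reduction is exactly the paper's: you unwind $\mu^-_{\nu,\lambda_2}$ via the definition $\mu^-_{\nu,\lambda_2}(N)=(\mu^+_{-\nu,-\lambda_2}(N^*))^*$, use $M_\eta^*\cong M_{-\eta}$, apply Theorem~\ref{mutation_NCCR} to the dualized module, and reduce the whole statement to the weight identity $\sum_{i\in B}\beta_i=\sum_{i\in A}\beta_i$, where $A$ and $B$ are the index sets cut out by $\lambda_1$ and $-\lambda_2$. Your proof of $A\subseteq B$ (feed the element $\nu-\beta_i=\chi+\sum_{j\in A\setminus\{i\}}\beta_j\in\calL\setminus\{\chi\}$ supplied by $(\rmY 2)$ for $\lambda_1$ into the separation $(\rmY 1)$ for $-\lambda_2$) is verbatim the paper's argument, which is phrased there as $B_{\lambda_1}\subseteq B_{-\lambda_2}$ using $\gamma_k=\sum_{j=1}^{d}\beta_{i_j}-\beta_{i_k}$.

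However, your proposal stops at precisely the point where a proof is still required: you never establish $\sum_{i\in B\setminus A}\beta_i=0$. You only observe that the available inequalities exclude $|B\setminus A|=1$, and then express the expectation that genericity plus a ``two-sided refinement of Lemma~\ref{mutation_lem}'' will finish the job, without carrying this out. That is a genuine gap, not a routine verification: without it, $\mu^-_{\nu,\lambda_2}(\mu^+_{\chi,\lambda_1}(M_\calL))=M_{\calL\setminus\chi}\oplus M_{-\nu''}$ could a priori differ from $M_\calL$, and nothing you wrote rules out $|B\setminus A|\geq 2$ with the extra weights summing to something nonzero. The paper closes this step by asserting the full set equality $B_{\lambda_1}=B_{-\lambda_2}$, obtaining the reverse inclusion by running the same two-step argument with the roles of $(\chi,\lambda_1,\calL\setminus\{\chi\})$ and $(-\nu,-\lambda_2,-(\calL\setminus\{\chi\}))$ interchanged: for $\beta_k\in B_{-\lambda_2}$, the element of $-(\calL\setminus\{\chi\})$ supplied by $(\rmY 2)$ for $-\lambda_2$ is fed into the separation $(\rmY 1)$ for $\lambda_1$. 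Your observation that this symmetric step is more delicate than the forward one (as stated it involves the sum over $B_{-\lambda_2}$, which is what one is trying to identify) is a fair criticism of the paper's brevity, but a critique of the paper's wording does not substitute for the missing argument; a complete proof must either carry out that symmetric inclusion or otherwise force $\sum_{i\in B\setminus A}\beta_i=0$. Part (b) in your proposal, obtained by dualizing (a), agrees with the paper's treatment but inherits the same gap.
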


\begin{proof}
(a) Let $B_\lambda\coloneqq \{\beta_i\mid\langle \lambda,\beta_i\rangle>0\}$, thus $B_{\lambda_1}=\{\beta_{i_1},\cdots,\beta_{i_d}\}$ by the definition. 
We first remark that $B_{\lambda_1}=B_{-\lambda_2}$ holds. 
In fact, for $\gamma_k=\sum_{j=1}^{d}\beta_{i_j}-\beta_{i_k}$ $(k=1,\cdots,d)$, we see that $\chi+\gamma_k\in\calL\setminus\{\chi\}$ 
by the condition $(\rmY 2)$. Thus, we have $\langle-\lambda_2,-\nu\rangle<\langle-\lambda_2, -\chi-\gamma_k\rangle$, 
and hence $\langle-\lambda_2,\beta_{i_k}\rangle>0$. Therefore, we have $B_{\lambda_1}\subset B_{-\lambda_2}$. 
We also have $B_{\lambda_1}\supset B_{-\lambda_2}$ by a similar argument. 
Then, we see that 
\begin{align*}
\mu^-_{\nu,\lambda_2}(\mu^+_{\chi,\lambda_1}(M_\calL))&=\left(\mu^+_{-\nu,-\lambda_2}\big(\big(\mu^+_{\chi,\lambda_1}(M_\calL)\big)^*\big)\right)^*=\left(\mu^+_{-\nu,-\lambda_2}(M_{\calL\setminus\chi}^*\oplus M_\nu^*)\right)^* \\
&=\big(\mu^+_{-\nu,-\lambda_2}(M_{-(\calL\setminus\chi)}\oplus M_{-\nu})\big)^*=(M_{-(\calL\setminus\chi)}\oplus M_{-\nu+\beta_{i_1}+\cdots+\beta_{i_d}})^* \\
&=(M_{-(\calL\setminus\chi)}\oplus M_{-\chi})^*=M_\calL. 
\end{align*}

(b) We similarly see that $B_{-\lambda_1}=B_{\lambda_2}$. 
Thus, we have 
\begin{align*}
\mu^+_{\nu,\lambda_2}(\mu^-_{\chi,\lambda_1}(M_\calL))&=\mu^+_{\nu,\lambda_2}\left(\big(\mu^+_{-\chi,-\lambda_1}(M_\calL^*)\big)^*\right)
=\mu^+_{\nu,\lambda_2}\left(\big(\mu^+_{-\chi,-\lambda_1}(M_{-\calL})\big)^*\right) \\
&=\mu^+_{\nu,\lambda_2}\left((M_{-(\calL\setminus\chi)}\oplus M_{\nu^\prime})^*\right)=\mu^+_{\nu,\lambda_2}(M_{\calL\setminus\chi}\oplus M_{-\nu^\prime}) \\
&=M_{\calL\setminus\chi}\oplus M_{-\nu^\prime+\beta^\prime_{i_1}+\cdots+\beta^\prime_{i_{d^\prime}}}=M_\calL. 
\end{align*}
\end{proof}

\begin{example}
Combining Theorem~\ref{NCCR_Segre} and Theorem~\ref{mutation_NCCR}, we have several NCCRs of Segre products of polynomial rings. 
For example, we consider the case of $r=2, t=3$. 
By Theorem~\ref{NCCR_Segre}, we see that $M_\calL=\bigoplus_{\eta\in\calL}M_\eta$ gives a splitting NCCR where $\calL=\{(0,0),(1,0),(0,1),(1,1)\}$. 
We pick $\chi=(1,0)$. Then, we see that $\lambda=(-1,1)\in\rmY_{\calL{\setminus}\chi,\chi}$. 
Since the weights $\beta_i$ satisfying $\langle \lambda,\beta_i\rangle>0$ are only $(0,1)$ (with the multiplicity two), 
we have that $\mu^+_{\chi,\lambda}(M_\calL)=M_{\calL{\setminus}\chi}\oplus M_\nu$ also gives a splitting NCCR by Theorem~\ref{mutation_NCCR}, 
where $\nu=(1,0)+(0,1)+(0,1)=(1,2)$. 

Repeating to apply the mutations $\mu^\pm_{\chi,\lambda}$, we have the \emph{exchange graph} shown in Figure~\ref{mutation_graph}.  
In this figure, each vertex is a splitting NCCR of $R$ given by the direct sum of divisorial ideals represented by red dots. 
Also, splitting NCCRs connected by an edge are transformed into each other using $\mu^+_{\chi,\lambda}$ and $\mu^-_{\chi,\lambda}$ 
for some $\chi\in\rmX(G)$ and $\lambda\in\rmY_{\calL{\setminus}\chi,\chi}. $
Since modules giving splitting NCCRs are infinite families, we only denote generators giving NCCRs. 
Here, a \emph{generator} is a module containing $M_\chi$ with $\chi=0$ as the direct summand. 

\begin{figure}[h!]
\begin{center}
{\scalebox{0.8}{
\begin{tikzpicture}

\node (A1) at (4,2.5)
{\scalebox{0.3}{
\begin{tikzpicture}
\coordinate (00) at (0,0); \coordinate (10) at (1,0); \coordinate (01) at (0,1); \coordinate (-10) at (-1,0); \coordinate (0-1) at (0,-1);  
\coordinate (11) at (1,1); \coordinate (-11) at (-1,1); \coordinate (-1-1) at (-1,-1); \coordinate (1-1) at (1,-1); 
\coordinate (12) at (1,2); \coordinate (21) at (2,1); \coordinate (-1-2) at (-1,-2); \coordinate (-2-1) at (-2,-1); 

\filldraw [color=lightgray] (1,0)--(1,1)--(0,1)--(-1,0)--(-1,-1)--(0,-1)--(1,0) ; 
\draw [step=1, gray] (-3,-3) grid (3,3);
\draw [color=gray] (1,0)--(1,1)--(0,1)--(-1,0)--(-1,-1)--(0,-1)--(1,0) ; 

\filldraw[red] (00) circle [radius=0.17] ; \filldraw[red] (10) circle [radius=0.17] ; \filldraw[red] (01) circle [radius=0.17] ; \filldraw (-10) circle [radius=0.17] ; 
\filldraw (0-1) circle [radius=0.17] ; \filldraw[red] (11) circle [radius=0.17] ; \filldraw (-11) circle [radius=0.17] ; \filldraw (-1-1) circle [radius=0.17] ;
\filldraw (1-1) circle [radius=0.17] ; \filldraw (21) circle [radius=0.17] ; \filldraw (12) circle [radius=0.17] ; 
\filldraw (-2-1) circle [radius=0.17] ; \filldraw (-1-2) circle [radius=0.17] ; 
\end{tikzpicture}
} }; 

\node (A2) at (3,-8.5)
{\scalebox{0.3}{
\begin{tikzpicture}
\filldraw [color=lightgray] (1,0)--(1,1)--(0,1)--(-1,0)--(-1,-1)--(0,-1)--(1,0) ; 
\draw [step=1, gray] (-3,-3) grid (3,3);
\draw [color=gray] (1,0)--(1,1)--(0,1)--(-1,0)--(-1,-1)--(0,-1)--(1,0) ; 

\filldraw[red] (00) circle [radius=0.17] ; \filldraw[red] (10) circle [radius=0.17] ; \filldraw (01) circle [radius=0.17] ; \filldraw (-10) circle [radius=0.17] ; 
\filldraw[red] (0-1) circle [radius=0.17] ; \filldraw (11) circle [radius=0.17] ; \filldraw (-11) circle [radius=0.17] ; \filldraw (-1-1) circle [radius=0.17] ;
\filldraw[red] (1-1) circle [radius=0.17] ; \filldraw (21) circle [radius=0.17] ; \filldraw (12) circle [radius=0.17] ; 
\filldraw (-2-1) circle [radius=0.17] ; \filldraw (-1-2) circle [radius=0.17] ; 
\end{tikzpicture}
} }; 

\node (A3) at (-3,8.5)
{\scalebox{0.3}{
\begin{tikzpicture}
\filldraw [color=lightgray] (1,0)--(1,1)--(0,1)--(-1,0)--(-1,-1)--(0,-1)--(1,0) ; 
\draw [step=1, gray] (-3,-3) grid (3,3);
\draw [color=gray] (1,0)--(1,1)--(0,1)--(-1,0)--(-1,-1)--(0,-1)--(1,0) ; 

\filldraw[red] (00) circle [radius=0.17] ; \filldraw (10) circle [radius=0.17] ; \filldraw[red] (01) circle [radius=0.17] ; \filldraw[red] (-10) circle [radius=0.17] ; 
\filldraw (0-1) circle [radius=0.17] ; \filldraw (11) circle [radius=0.17] ; \filldraw[red] (-11) circle [radius=0.17] ; \filldraw (-1-1) circle [radius=0.17] ;
\filldraw (1-1) circle [radius=0.17] ; \filldraw (21) circle [radius=0.17] ; \filldraw (12) circle [radius=0.17] ; 
\filldraw (-2-1) circle [radius=0.17] ; \filldraw (-1-2) circle [radius=0.17] ; 
\end{tikzpicture}
} }; 

\node (A4) at (-4,-2.5)
{\scalebox{0.3}{
\begin{tikzpicture}
\filldraw [color=lightgray] (1,0)--(1,1)--(0,1)--(-1,0)--(-1,-1)--(0,-1)--(1,0) ; 
\draw [step=1, gray] (-3,-3) grid (3,3);
\draw [color=gray] (1,0)--(1,1)--(0,1)--(-1,0)--(-1,-1)--(0,-1)--(1,0) ; 

\filldraw[red] (00) circle [radius=0.17] ; \filldraw (10) circle [radius=0.17] ; \filldraw (01) circle [radius=0.17] ; \filldraw[red] (-10) circle [radius=0.17] ; 
\filldraw[red] (0-1) circle [radius=0.17] ; \filldraw (11) circle [radius=0.17] ; \filldraw (-11) circle [radius=0.17] ; \filldraw[red] (-1-1) circle [radius=0.17] ;
\filldraw (1-1) circle [radius=0.17] ; \filldraw (21) circle [radius=0.17] ; \filldraw (12) circle [radius=0.17] ; 
\filldraw (-2-1) circle [radius=0.17] ; \filldraw (-1-2) circle [radius=0.17] ; 
\end{tikzpicture}
} }; 

\node (B1) at (8,0)
{\scalebox{0.3}{
\begin{tikzpicture}
\filldraw [color=lightgray] (1,0)--(1,1)--(0,1)--(-1,0)--(-1,-1)--(0,-1)--(1,0) ; 
\draw [step=1, gray] (-3,-3) grid (3,3);
\draw [color=gray] (1,0)--(1,1)--(0,1)--(-1,0)--(-1,-1)--(0,-1)--(1,0) ; 

\filldraw[red] (00) circle [radius=0.17] ; \filldraw[red] (10) circle [radius=0.17] ; \filldraw (01) circle [radius=0.17] ; \filldraw (-10) circle [radius=0.17] ; 
\filldraw (0-1) circle [radius=0.17] ; \filldraw[red] (11) circle [radius=0.17] ; \filldraw (-11) circle [radius=0.17] ; \filldraw (-1-1) circle [radius=0.17] ;
\filldraw (1-1) circle [radius=0.17] ; \filldraw[red] (21) circle [radius=0.17] ; \filldraw (12) circle [radius=0.17] ; 
\filldraw (-2-1) circle [radius=0.17] ; \filldraw (-1-2) circle [radius=0.17] ; 
\end{tikzpicture}
} }; 

\node (B2) at (0,5.5)
{\scalebox{0.3}{
\begin{tikzpicture}
\filldraw [color=lightgray] (1,0)--(1,1)--(0,1)--(-1,0)--(-1,-1)--(0,-1)--(1,0) ; 
\draw [step=1, gray] (-3,-3) grid (3,3);
\draw [color=gray] (1,0)--(1,1)--(0,1)--(-1,0)--(-1,-1)--(0,-1)--(1,0) ; 

\filldraw[red] (00) circle [radius=0.17] ; \filldraw (10) circle [radius=0.17] ; \filldraw [red](01) circle [radius=0.17] ; \filldraw[red] (-10) circle [radius=0.17] ; 
\filldraw (0-1) circle [radius=0.17] ; \filldraw[red] (11) circle [radius=0.17] ; \filldraw (-11) circle [radius=0.17] ; \filldraw (-1-1) circle [radius=0.17] ;
\filldraw (1-1) circle [radius=0.17] ; \filldraw (21) circle [radius=0.17] ; \filldraw (12) circle [radius=0.17] ; 
\filldraw (-2-1) circle [radius=0.17] ; \filldraw (-1-2) circle [radius=0.17] ; 
\end{tikzpicture}
} }; 

\node (B3) at (0,-5.5)
{\scalebox{0.3}{
\begin{tikzpicture}
\filldraw [color=lightgray] (1,0)--(1,1)--(0,1)--(-1,0)--(-1,-1)--(0,-1)--(1,0) ; 
\draw [step=1, gray] (-3,-3) grid (3,3);
\draw [color=gray] (1,0)--(1,1)--(0,1)--(-1,0)--(-1,-1)--(0,-1)--(1,0) ; 

\filldraw[red] (00) circle [radius=0.17] ; \filldraw[red] (10) circle [radius=0.17] ; \filldraw (01) circle [radius=0.17] ; \filldraw (-10) circle [radius=0.17] ; 
\filldraw[red] (0-1) circle [radius=0.17] ; \filldraw (11) circle [radius=0.17] ; \filldraw (-11) circle [radius=0.17] ; \filldraw[red] (-1-1) circle [radius=0.17] ;
\filldraw (1-1) circle [radius=0.17] ; \filldraw (21) circle [radius=0.17] ; \filldraw (12) circle [radius=0.17] ; 
\filldraw (-2-1) circle [radius=0.17] ; \filldraw (-1-2) circle [radius=0.17] ; 
\end{tikzpicture}
} }; 

\node (B4) at (-8,0)
{\scalebox{0.3}{
\begin{tikzpicture}
\filldraw [color=lightgray] (1,0)--(1,1)--(0,1)--(-1,0)--(-1,-1)--(0,-1)--(1,0) ; 
\draw [step=1, gray] (-3,-3) grid (3,3);
\draw [color=gray] (1,0)--(1,1)--(0,1)--(-1,0)--(-1,-1)--(0,-1)--(1,0) ; 

\filldraw[red] (00) circle [radius=0.17] ; \filldraw (10) circle [radius=0.17] ; \filldraw (01) circle [radius=0.17] ; \filldraw[red] (-10) circle [radius=0.17] ; 
\filldraw (0-1) circle [radius=0.17] ; \filldraw (11) circle [radius=0.17] ; \filldraw (-11) circle [radius=0.17] ; \filldraw[red] (-1-1) circle [radius=0.17] ;
\filldraw (1-1) circle [radius=0.17] ; \filldraw (21) circle [radius=0.17] ; \filldraw (12) circle [radius=0.17] ; 
\filldraw[red] (-2-1) circle [radius=0.17] ; \filldraw (-1-2) circle [radius=0.17] ; 
\end{tikzpicture}
} }; 

\node (C1) at (3,8.5)
{\scalebox{0.3}{
\begin{tikzpicture}
\filldraw [color=lightgray] (1,0)--(1,1)--(0,1)--(-1,0)--(-1,-1)--(0,-1)--(1,0) ; 
\draw [step=1, gray] (-3,-3) grid (3,3);
\draw [color=gray] (1,0)--(1,1)--(0,1)--(-1,0)--(-1,-1)--(0,-1)--(1,0) ; 

\filldraw[red] (00) circle [radius=0.17] ; \filldraw (10) circle [radius=0.17] ; \filldraw[red] (01) circle [radius=0.17] ; \filldraw (-10) circle [radius=0.17] ; 
\filldraw (0-1) circle [radius=0.17] ; \filldraw[red] (11) circle [radius=0.17] ; \filldraw (-11) circle [radius=0.17] ; \filldraw (-1-1) circle [radius=0.17] ;
\filldraw (1-1) circle [radius=0.17] ; \filldraw (21) circle [radius=0.17] ; \filldraw[red] (12) circle [radius=0.17] ; 
\filldraw (-2-1) circle [radius=0.17] ; \filldraw (-1-2) circle [radius=0.17] ; 
\end{tikzpicture}
} }; 

\node (C2) at (4,-2.5)
{\scalebox{0.3}{
\begin{tikzpicture}
\filldraw [color=lightgray] (1,0)--(1,1)--(0,1)--(-1,0)--(-1,-1)--(0,-1)--(1,0) ; 
\draw [step=1, gray] (-3,-3) grid (3,3);
\draw [color=gray] (1,0)--(1,1)--(0,1)--(-1,0)--(-1,-1)--(0,-1)--(1,0) ; 

\filldraw[red] (00) circle [radius=0.17] ; \filldraw[red] (10) circle [radius=0.17] ; \filldraw (01) circle [radius=0.17] ; \filldraw (-10) circle [radius=0.17] ; 
\filldraw[red] (0-1) circle [radius=0.17] ; \filldraw[red] (11) circle [radius=0.17] ; \filldraw (-11) circle [radius=0.17] ; \filldraw (-1-1) circle [radius=0.17] ;
\filldraw (1-1) circle [radius=0.17] ; \filldraw (21) circle [radius=0.17] ; \filldraw (12) circle [radius=0.17] ; 
\filldraw (-2-1) circle [radius=0.17] ; \filldraw (-1-2) circle [radius=0.17] ; 
\end{tikzpicture}
} }; 

\node (C3) at (-3,-8.5)
{\scalebox{0.3}{
\begin{tikzpicture}
\filldraw [color=lightgray] (1,0)--(1,1)--(0,1)--(-1,0)--(-1,-1)--(0,-1)--(1,0) ; 
\draw [step=1, gray] (-3,-3) grid (3,3);
\draw [color=gray] (1,0)--(1,1)--(0,1)--(-1,0)--(-1,-1)--(0,-1)--(1,0) ; 

\filldraw[red] (00) circle [radius=0.17] ; \filldraw (10) circle [radius=0.17] ; \filldraw (01) circle [radius=0.17] ; \filldraw (-10) circle [radius=0.17] ; 
\filldraw[red] (0-1) circle [radius=0.17] ; \filldraw (11) circle [radius=0.17] ; \filldraw (-11) circle [radius=0.17] ; \filldraw[red] (-1-1) circle [radius=0.17] ;
\filldraw (1-1) circle [radius=0.17] ; \filldraw (21) circle [radius=0.17] ; \filldraw (12) circle [radius=0.17] ; 
\filldraw (-2-1) circle [radius=0.17] ; \filldraw[red] (-1-2) circle [radius=0.17] ; 
\end{tikzpicture}
} }; 

\node (C4) at (-4,2.5)
{\scalebox{0.3}{
\begin{tikzpicture}
\filldraw [color=lightgray] (1,0)--(1,1)--(0,1)--(-1,0)--(-1,-1)--(0,-1)--(1,0) ; 
\draw [step=1, gray] (-3,-3) grid (3,3);
\draw [color=gray] (1,0)--(1,1)--(0,1)--(-1,0)--(-1,-1)--(0,-1)--(1,0) ; 

\filldraw[red] (00) circle [radius=0.17] ; \filldraw (10) circle [radius=0.17] ; \filldraw[red] (01) circle [radius=0.17] ; \filldraw[red] (-10) circle [radius=0.17] ; 
\filldraw (0-1) circle [radius=0.17] ; \filldraw (11) circle [radius=0.17] ; \filldraw (-11) circle [radius=0.17] ; \filldraw[red] (-1-1) circle [radius=0.17] ;
\filldraw (1-1) circle [radius=0.17] ; \filldraw (21) circle [radius=0.17] ; \filldraw (12) circle [radius=0.17] ; 
\filldraw (-2-1) circle [radius=0.17] ; \filldraw (-1-2) circle [radius=0.17] ; 
\end{tikzpicture}
} }; 

\node (D1) at (6,5.5)
{\scalebox{0.3}{
\begin{tikzpicture}
\filldraw [color=lightgray] (1,0)--(1,1)--(0,1)--(-1,0)--(-1,-1)--(0,-1)--(1,0) ; 
\draw [step=1, gray] (-3,-3) grid (3,3);
\draw [color=gray] (1,0)--(1,1)--(0,1)--(-1,0)--(-1,-1)--(0,-1)--(1,0) ; 

\filldraw[red] (00) circle [radius=0.17] ; \filldraw (10) circle [radius=0.17] ; \filldraw (01) circle [radius=0.17] ; \filldraw (-10) circle [radius=0.17] ; 
\filldraw (0-1) circle [radius=0.17] ; \filldraw[red] (11) circle [radius=0.17] ; \filldraw (-11) circle [radius=0.17] ; \filldraw (-1-1) circle [radius=0.17] ;
\filldraw (1-1) circle [radius=0.17] ; \filldraw[red] (21) circle [radius=0.17] ; \filldraw[red] (12) circle [radius=0.17] ; 
\filldraw (-2-1) circle [radius=0.17] ; \filldraw (-1-2) circle [radius=0.17] ; 
\end{tikzpicture}
} }; 

\node (D2) at (0,-2)
{\scalebox{0.3}{
\begin{tikzpicture}
\filldraw [color=lightgray] (1,0)--(1,1)--(0,1)--(-1,0)--(-1,-1)--(0,-1)--(1,0) ; 
\draw [step=1, gray] (-3,-3) grid (3,3);
\draw [color=gray] (1,0)--(1,1)--(0,1)--(-1,0)--(-1,-1)--(0,-1)--(1,0) ; 

\filldraw[red] (00) circle [radius=0.17] ; \filldraw[red] (10) circle [radius=0.17] ; \filldraw[red] (01) circle [radius=0.17] ; \filldraw (-10) circle [radius=0.17] ; 
\filldraw (0-1) circle [radius=0.17] ; \filldraw (11) circle [radius=0.17] ; \filldraw (-11) circle [radius=0.17] ; \filldraw[red] (-1-1) circle [radius=0.17] ;
\filldraw (1-1) circle [radius=0.17] ; \filldraw (21) circle [radius=0.17] ; \filldraw (12) circle [radius=0.17] ; 
\filldraw (-2-1) circle [radius=0.17] ; \filldraw (-1-2) circle [radius=0.17] ; 
\end{tikzpicture}
} }; 

\node (D3) at (0,-10.5)
{\scalebox{0.3}{
\begin{tikzpicture}
\filldraw [color=lightgray] (1,0)--(1,1)--(0,1)--(-1,0)--(-1,-1)--(0,-1)--(1,0) ; 
\draw [step=1, gray] (-3,-3) grid (3,3);
\draw [color=gray] (1,0)--(1,1)--(0,1)--(-1,0)--(-1,-1)--(0,-1)--(1,0) ; 

\filldraw[red] (00) circle [radius=0.17] ; \filldraw (10) circle [radius=0.17] ; \filldraw (01) circle [radius=0.17] ; \filldraw (-10) circle [radius=0.17] ; 
\filldraw[red] (0-1) circle [radius=0.17] ; \filldraw (11) circle [radius=0.17] ; \filldraw (-11) circle [radius=0.17] ; \filldraw (-1-1) circle [radius=0.17] ;
\filldraw[red] (1-1) circle [radius=0.17] ; \filldraw (21) circle [radius=0.17] ; \filldraw (12) circle [radius=0.17] ; 
\filldraw (-2-1) circle [radius=0.17] ; \filldraw[red] (-1-2) circle [radius=0.17] ; 
\end{tikzpicture}
} }; 

\node (D4) at (-6,5.5)
{\scalebox{0.3}{
\begin{tikzpicture}
\filldraw [color=lightgray] (1,0)--(1,1)--(0,1)--(-1,0)--(-1,-1)--(0,-1)--(1,0) ; 
\draw [step=1, gray] (-3,-3) grid (3,3);
\draw [color=gray] (1,0)--(1,1)--(0,1)--(-1,0)--(-1,-1)--(0,-1)--(1,0) ; 

\filldraw[red] (00) circle [radius=0.17] ; \filldraw (10) circle [radius=0.17] ; \filldraw (01) circle [radius=0.17] ; \filldraw[red] (-10) circle [radius=0.17] ; 
\filldraw (0-1) circle [radius=0.17] ; \filldraw (11) circle [radius=0.17] ; \filldraw[red] (-11) circle [radius=0.17] ; \filldraw (-1-1) circle [radius=0.17] ;
\filldraw (1-1) circle [radius=0.17] ; \filldraw (21) circle [radius=0.17] ; \filldraw (12) circle [radius=0.17] ; 
\filldraw[red] (-2-1) circle [radius=0.17] ; \filldraw (-1-2) circle [radius=0.17] ; 
\end{tikzpicture}
} }; 

\node (D1*) at (-6,-5.5)
{\scalebox{0.3}{
\begin{tikzpicture}
\filldraw [color=lightgray] (1,0)--(1,1)--(0,1)--(-1,0)--(-1,-1)--(0,-1)--(1,0) ; 
\draw [step=1, gray] (-3,-3) grid (3,3);
\draw [color=gray] (1,0)--(1,1)--(0,1)--(-1,0)--(-1,-1)--(0,-1)--(1,0) ; 

\filldraw[red] (00) circle [radius=0.17] ; \filldraw (10) circle [radius=0.17] ; \filldraw (01) circle [radius=0.17] ; \filldraw (-10) circle [radius=0.17] ; 
\filldraw (0-1) circle [radius=0.17] ; \filldraw (11) circle [radius=0.17] ; \filldraw (-11) circle [radius=0.17] ; \filldraw[red] (-1-1) circle [radius=0.17] ;
\filldraw (1-1) circle [radius=0.17] ; \filldraw (21) circle [radius=0.17] ; \filldraw (12) circle [radius=0.17] ; 
\filldraw[red] (-2-1) circle [radius=0.17] ; \filldraw[red] (-1-2) circle [radius=0.17] ; 
\end{tikzpicture}
} }; 

\node (D2*) at (0,2)
{\scalebox{0.3}{
\begin{tikzpicture}
\filldraw [color=lightgray] (1,0)--(1,1)--(0,1)--(-1,0)--(-1,-1)--(0,-1)--(1,0) ; 
\draw [step=1, gray] (-3,-3) grid (3,3);
\draw [color=gray] (1,0)--(1,1)--(0,1)--(-1,0)--(-1,-1)--(0,-1)--(1,0) ; 

\filldraw[red] (00) circle [radius=0.17] ; \filldraw (10) circle [radius=0.17] ; \filldraw (01) circle [radius=0.17] ; \filldraw[red] (-10) circle [radius=0.17] ; 
\filldraw[red] (0-1) circle [radius=0.17] ; \filldraw[red] (11) circle [radius=0.17] ; \filldraw (-11) circle [radius=0.17] ; \filldraw (-1-1) circle [radius=0.17] ;
\filldraw (1-1) circle [radius=0.17] ; \filldraw (21) circle [radius=0.17] ; \filldraw (12) circle [radius=0.17] ; 
\filldraw (-2-1) circle [radius=0.17] ; \filldraw (-1-2) circle [radius=0.17] ; 
\end{tikzpicture}
} }; 

\node (D3*) at (0,10.5)
{\scalebox{0.3}{
\begin{tikzpicture}
\filldraw [color=lightgray] (1,0)--(1,1)--(0,1)--(-1,0)--(-1,-1)--(0,-1)--(1,0) ; 
\draw [step=1, gray] (-3,-3) grid (3,3);
\draw [color=gray] (1,0)--(1,1)--(0,1)--(-1,0)--(-1,-1)--(0,-1)--(1,0) ; 

\filldraw[red] (00) circle [radius=0.17] ; \filldraw (10) circle [radius=0.17] ; \filldraw[red] (01) circle [radius=0.17] ; \filldraw (-10) circle [radius=0.17] ; 
\filldraw (0-1) circle [radius=0.17] ; \filldraw (11) circle [radius=0.17] ; \filldraw[red] (-11) circle [radius=0.17] ; \filldraw (-1-1) circle [radius=0.17] ;
\filldraw (1-1) circle [radius=0.17] ; \filldraw (21) circle [radius=0.17] ; \filldraw[red] (12) circle [radius=0.17] ; 
\filldraw (-2-1) circle [radius=0.17] ; \filldraw (-1-2) circle [radius=0.17] ; 
\end{tikzpicture}
} }; 

\node (D4*) at (6,-5.5)
{\scalebox{0.3}{
\begin{tikzpicture}
\filldraw [color=lightgray] (1,0)--(1,1)--(0,1)--(-1,0)--(-1,-1)--(0,-1)--(1,0) ; 
\draw [step=1, gray] (-3,-3) grid (3,3);
\draw [color=gray] (1,0)--(1,1)--(0,1)--(-1,0)--(-1,-1)--(0,-1)--(1,0) ; 

\filldraw[red] (00) circle [radius=0.17] ; \filldraw[red] (10) circle [radius=0.17] ; \filldraw (01) circle [radius=0.17] ; \filldraw (-10) circle [radius=0.17] ; 
\filldraw (0-1) circle [radius=0.17] ; \filldraw (11) circle [radius=0.17] ; \filldraw (-11) circle [radius=0.17] ; \filldraw (-1-1) circle [radius=0.17] ;
\filldraw[red] (1-1) circle [radius=0.17] ; \filldraw[red] (21) circle [radius=0.17] ; \filldraw (12) circle [radius=0.17] ; 
\filldraw (-2-1) circle [radius=0.17] ; \filldraw (-1-2) circle [radius=0.17] ; 
\end{tikzpicture}
} }; 

\draw[line width=0.02cm] (A1)--(B1); 
\draw[line width=0.02cm] (A1)--(B2); 
\draw[line width=0.02cm] (A1)--(C1); 
\draw[line width=0.02cm] (A1)--(C2); 
\draw[line width=0.02cm] (A1)--(D2); 
\draw[line width=0.02cm] (B1)--(C2); 
\draw[line width=0.02cm] (B1)--(D1); 
\draw[line width=0.02cm] (B1)--(D4*); 
\draw[line width=0.02cm] (B2)--(C1); 
\draw[line width=0.02cm] (C1)--(D1); 
\draw[line width=0.02cm] (A2)--(C2); 
\draw[line width=0.02cm] (B3)--(C2); 
\draw[line width=0.02cm] (C2)--(D2*); 
\draw[line width=0.02cm] (B2)--(D2*); 
\draw[line width=0.02cm] (B2)--(C4); 
\draw[line width=0.02cm] (A3)--(B2); 
\draw[line width=0.02cm] (B3)--(D2); 
\draw[line width=0.02cm] (C4)--(D2); 
\draw[line width=0.02cm] (A4)--(D2*); 
\draw[line width=0.02cm] (A2)--(D4*); 
\draw[line width=0.02cm] (B3)--(C3); 
\draw[line width=0.02cm] (A4)--(C3); 
\draw[line width=0.02cm] (C3)--(D3); 
\draw[line width=0.02cm] (C3)--(D1*); 
\draw[line width=0.02cm] (A2)--(B3); 
\draw[line width=0.02cm] (A4)--(B3); 
\draw[line width=0.02cm] (A2)--(D3); 
\draw[line width=0.02cm] (A3)--(C4); 
\draw[line width=0.02cm] (A4)--(C4); 
\draw[line width=0.02cm] (B4)--(C4); 
\draw[line width=0.02cm] (A3)--(D4); 
\draw[line width=0.02cm] (A3)--(D3*); 
\draw[line width=0.02cm] (C1)--(D3*); 
\draw[line width=0.02cm] (A4)--(B4); 
\draw[line width=0.02cm] (B4)--(D1*); 
\draw[line width=0.02cm] (B4)--(D4); 
\end{tikzpicture}
} }
\end{center}
\caption{The exchange graph of generators giving splitting NCCRs for the Segre product of polynomial rings with $r=2, t=3$}
\label{mutation_graph}
\end{figure}
\end{example}

\subsection*{Acknowledgement} 
The authors thank \v{S}pela \v{S}penko for valuable discussions concerning NCCRs and for explaining results in \cite{SpVdB}. 
The authors also thank the anonymous referee for valuable comments. 

The first author is partially supported by JSPS Grant-in-Aid for Young Scientists (B) 17K14177. 
The second author is supported by World Premier International Research Center Initiative (WPI initiative), MEXT, Japan, 
and JSPS Grant-in-Aid for Young Scientists (B) 17K14159. 


\end{document}